\DeclareMathOperator{\scope}{\mathsf{scope}}
\DeclareMathOperator{\rise}{\mathsf{span}}
\DeclareMathOperator{\riseee}{\mathsf{spectrum}}
\DeclareMathOperator{\domain}{\mathsf{domain}}
\DeclareMathOperator{\pset}{\mathsf{power\hspace{-0.8pt}.\hspace{-2pt}set}}
\DeclareMathOperator{\nodes}{\mathsf{nodes}}
\DeclareMathOperator{\maxel}{\mathsf{max}}
\DeclareMathOperator{\minel}{\mathsf{min}}
\DeclareMathOperator{\height}{\mathsf{height}}
\DeclareMathOperator{\level}{\mathsf{level}}
\DeclareMathOperator{\sons}{\mathsf{sons}}
\DeclareMathOperator{\impl}{\mathsf{implant}}
\DeclareMathOperator{\expl}{\mathsf{explant}}
\DeclareMathOperator{\supp}{\mathsf{support}}
\DeclareMathOperator{\hybr}{\mathsf{hybrid}}
\DeclareMathOperator{\skeleton}{\mathsf{skeleton}}
\DeclareMathOperator{\flesh}{\mathsf{flesh}}
\DeclareMathOperator{\cofin}{\mathsf{cofin}}
\DeclareMathOperator{\shoot}{\mathsf{shoot}}
\DeclareMathOperator{\cut}{\mathsf{cut}}
\DeclareMathOperator{\loss}{\mathsf{loss}}
\DeclareMathOperator{\fhybr}{\hspace{0.4pt}\mathsf{fol\hspace{-0.7pt}.\hspace{-2pt}hybr}\hspace{0.5pt}}
\DeclareMathOperator{\length}{\mathsf{length}}
\DeclareMathOperator{\nbhds}{\mathsf{nbhds}}
\author{Mikhail Patrakeev
\footnote{Krasovskii Institute of Mathematics and Mechanics of UB RAS, 620990, 16 Sofia Kovalevskaya street, Yekaterinburg, Russia and
Ural Federal University, 620002, 19 Mira street, Yekaterinburg, Russia;
patrakeev@mail.ru}
}
\title{When the property of having a $\pi\!$-tree\\is preserved by products\footnote{2010 Mathematics Subject Classification\textup{:} Primary 54{E}99; Secondary 54{H}05. Keywords\textup{:} the Sorgenfrey line, the Baire space, Souslin scheme, Lusin scheme, open sieve, Lusin pi-base, pi-tree, foliage tree, the foliage hybrid operation, product of topological spaces.}}
\date{}
\begin{document}
\hyphenation{no-n-in-cre-a-s-ing tran-si-ti-ve par-ti-al}
\renewcommand{\proofname}{\textup{\textbf{Proof}}}
\renewcommand{\abstractname}{\textup{Abstract}}
\renewcommand{\refname}{\textup{References}}
\mathsurround=2pt
\maketitle

\begin{abstract}
We find sufficient conditions under which the product of spaces that have a $\pi\!$-tree also has a $\pi\!$-tree.
These conditions give new examples of spaces with a $\pi\!$-tree:
every at most countable power of the Sorgenfrey line and every at most countable power of the irrational Sorgenfrey line has a $\pi\!$-tree.
Also we show that if a space has a $\pi\!$-tree, then its product with the Baire space, with the Sorgenfrey line, and with the countable power of the Sorgenfrey line also has a $\pi\!$-tree.
\end{abstract}

\section{Introduction}
\label{section0}

We study topological spaces that have a $\pi\!$-tree; the notion of a $\pi\!$-tree was introduced in~\cite{my.paper} and is equivalent~\cite[Remark~11]{my.paper} to the notion of a Lusin $\pi\!$-base, which was introduced in~\cite{MPatr}. 
The Sorgenfrey line $\mathcal{R}_{\scriptscriptstyle\mathcal{S}}$ and the Baire space $\mathcal{N}$ (that is, ${}^{\omega}\omega$ with the product topology) are examples of spaces with a $\pi\!$-tree~\cite{MPatr}.
Every space that has a $\pi\!$-tree shares many good properties with the Baire space. One reason for this is expressed in Lemmas~\ref{lem.pi.and.B.f.trees.vs.S} and \ref{lem.F.vs.S.by.isomorph}, another two are the following:
If a space ${X}$ has a $\pi\!$-tree, then ${X}\hspace{-1pt}$ can be mapped onto $\mathcal{N}$ by a continuous one-to-one map~\cite{MPatr} and also ${X}\hspace{-1pt}$ can be mapped onto $\mathcal{N}$ by a continuous open map~\cite{MPatr} (hence ${X}\hspace{-1pt}$ can be mapped by a continuous open map onto an arbitrary Polish space, see~\cite{arh1} or~\cite[Exercise\,7.14]{kech}).
Every space that has a $\pi\!$-tree also has a countable $\pi\!$-base, see Lemma~\ref{lem.pi.base.pseudo.base}.

In this paper we study the following question: When does the product of spaces that have a $\pi\!$-tree also have a $\pi\!$-tree? We find several kinds of conditions (see Theorems~\ref{th.1},~\ref{th.2} and Corollary~\ref{cor.Y*product}) under which an at most countable product of spaces that have a $\pi\!$-tree also has a $\pi\!$-tree. We consider only at most countable products because an uncountable product of spaces that have a $\pi\!$-tree has an uncountable pseudocharacter, therefore it has no $\pi\!$-tree (see~\cite[statement\,5.3.b]{Juh} and
Lemmas~\ref{lem.pi.and.B.f.trees.vs.S},\,\ref{lem.pi.base.pseudo.base},\,\ref{lem.F.vs.S.by.isomorph}).

The above results give new examples of spaces that have a $\pi\!$-tree, see Section~\ref{sect.examples}.
For instance, Corollary~\ref{cor.rise.in.N.and.sorg.line} assirts that
if ${1}\leqslant{|}{A}{|}\leqslant\omega$ and for each $\alpha\in{A},$
$$
\text{either}\qquad{X}_{\alpha}\hspace{-1pt}=\hspace{1pt}\mathcal{N}
\qquad\text{or}\qquad
{X}_{\alpha}\subseteq\hspace{1pt}\mathcal{R}_{\scriptscriptstyle\mathcal{S}}
\ \text{ with }\ \mathcal{R}_{\scriptscriptstyle\mathcal{S}}
\hspace{0.7pt}{\setminus}\,{X}_{\alpha}
\ \text{ at most countable,}
$$
then the product $\prod_{\alpha\in{A}}{X}_{\alpha}$ has a $\pi\!$-tree.
In particular, the powers ${\mathcal{R}_{\scriptscriptstyle\mathcal{S}}}^{{n}}\hspace{-1pt}$ and ${\mathcal{I}_{\scriptscriptstyle\mathcal{S}}}^{{n}}\hspace{-1pt}$ ($\mathcal{I}_{\scriptscriptstyle\mathcal{S}}$ denotes the irrational Sorgenfrey line $\mathcal{R}_{\scriptscriptstyle\mathcal{S}}\setminus\mathbb{Q}$) have a $\pi\!$-tree for all natural ${n}\geqslant{1},$ and the powers
${\mathcal{R}_{\scriptscriptstyle\mathcal{S}}}^{\omega}\hspace{-1pt}$ and ${\mathcal{I}_{\scriptscriptstyle\mathcal{S}}}^{\omega}\hspace{-1pt}$ also have a $\pi\!$-tree. (Note that no finite power of the irrational Sorgenfrey line is homeomorphic to finite power of the Sorgenfrey line~\cite{Pacific}.)
Other examples of spaces with a $\pi\!$-tree can be obtained by using Corollary~\ref{cor.pi.tree.for.X*N.X*Sor.l}, which says that if a space ${X}$ has a $\pi\!$-tree, then the products ${X}\times\mathcal{N},$ ${X}\times\mathcal{R}_{\scriptscriptstyle\mathcal{S}},$ and ${X}\times{\mathcal{R}_{\scriptscriptstyle\mathcal{S}}}^{\omega}\hspace{-1pt}$ also have a $\pi\!$-tree.

\section{Notation and terminology}
\label{section1}

We use standard set-theoretic notation from \cite{jech,kun}. In particular, each ordinal is equal to the set of smaller ordinals, $\omega=$ the set of natural numbers $=$ the set of finite ordinals $=$ the first limit ordinal $=$ the first infinite cardinal, and ${n}=\{0,\ldots,{n}-1\}$ for all ${n}\in\omega.$
A \emph{space} is a topological space; we use terminology from~\cite{top.enc} when we work with spaces. Also we use the following notations:

\begin{terminology}\label{not01}%
   The symbol $\coloneq$ means ``equals by definition''\textup{;}
   the symbol ${\colon}{\longleftrightarrow}$ is used to show that an expression on the left side is an abbreviation for expression on the right side\textup{;}

   \begin{itemize}
   \item [\ding{46}]
      $\mathsurround=0pt
      {x}\subset{y}
      \quad{\colon}{\longleftrightarrow}\quad
      {x}\subseteq{y}\enskip\mathsf{and}\enskip{x}\neq{y};$
   \item [\ding{46}]
      $\mathsurround=0pt
      {A}\equiv\bigsqcup_{{\lambda}\in{\Lambda}}{B}_{\lambda}
      \quad{\colon}{\longleftrightarrow}\quad
      {A}=\bigcup_{{\lambda}\in{\Lambda}}{B}_{\lambda}\enskip\mathsf{ and }\enskip
      \forall{\lambda},{\lambda}'\in{\Lambda}
      \;[{\lambda}\neq{\lambda}'\to{B}_{\lambda}\cap{B}_{{\lambda}'}=\varnothing];$
   \item [\ding{46}]
      $\mathsurround=0pt
      [{A}]^{\kappa}\coloneq
      \big\{{B}\subseteq{A}:{|}{B}{|}=\kappa\big\},\quad
      [{A}]^{{<}\kappa}\coloneq
      \big\{{B}\subseteq{A}:{|}{B}{|}<\kappa\big\}
      \quad$\textup{(}here $\kappa$ is a cardinal\textup{);}
   \item [\ding{46}]
      $\mathsurround=0pt
      \gamma\,$ has the \textup{\textsf{FIP}}$\quad{\colon}{\longleftrightarrow}\quad
      \forall\delta\in[\gamma]^{{<}\omega}{\setminus}\{\varnothing\}
      \ \big[\bigcap\delta\neq\varnothing\big]
      \quad$\textup{(\textsf{FIP}} means finite intersection property\textup{);}
   \item [\ding{46}]
      $\mathsurround=0pt
      \cofin{A}\coloneq\big\{{A}\setminus{F}:{F}\in[{A}]^{{<}\omega}\big\};$
   \item [\ding{46}]
      $\mathsurround=0pt
      \nbhds({p},{X})\coloneq$
      the set of (not necessarily open) neighbourhoods of point ${p}$ in space~${X};$
   \item [\ding{46}]
      $\mathsurround=0pt
      {f}{\upharpoonright}{A}\coloneq$ the restriction of function ${f}$ to ${A};$
   \item [\ding{46}]
      $\mathsurround=0pt
      \gamma\gg\delta
      \quad{\colon}{\longleftrightarrow}\quad
      \gamma\,$ $\pi\!$-\emph{refines} $\,\delta
      \quad{\colon}{\longleftrightarrow}\quad
      \forall\hspace{-1pt}{D}\,{\in}\:\delta\hspace{0.7pt}{\setminus}\{\varnothing\}
      \ \,\exists{G}\,{\in}\,\gamma\hspace{0.3pt}{\setminus}\{\varnothing\}
      \ \,[\,{G}\subseteq{D}\,].$
   \end{itemize}
\end{terminology}

When we work with (transfinite) sequences, we use the following notations:

\begin{terminology}\label{not02}%
   Suppose ${n}\in\omega$ and ${s},{t}$ are sequences; that is, ${s}$ and ${t}$ are functions whose domain is an ordinal.

   \begin{itemize}
   \item [\ding{46}]
      $\mathsurround=0pt
      \length{s}\coloneq$ the domain of ${s};$
   \item [\ding{46}]
      note that
      $\;{s}\subseteq{t}
      \enskip\;\:\mathsf{iff}\enskip\:
      \length{s}\leqslant\length{t}\enskip\text{and}\enskip
      {s}={t}{\upharpoonright}\length{s};$
   \item [\ding{46}]
      $\mathsurround=0pt
      \langle{r}_0,\ldots,{r}_{{n}-1}\rangle\coloneq$
      the sequence ${r}$ such that  $\length{r}={n}$ and ${r}(i)={r}_{i}$ for all ${i}\in{n};$
   \item [\ding{46}]
      $\mathsurround=0pt
      \langle\rangle\coloneq$ the sequence of length {0};
   \item [\ding{46}]
      $\mathsurround=0pt
      \langle{r}_0,\ldots,{r}_{{n}-1}\rangle\hat{\:}\langle{s}_0,\ldots,{s}_{{m}-1}\rangle
      \coloneq\langle{r}_0,\ldots,{r}_{{n}-1},{s}_0,\ldots,{s}_{{m}-1}\rangle;$
   \item [\ding{46}]
      $\mathsurround=0pt
      {}^{B}\!{A}\coloneq$ the set of functions from ${B}$ to ${A};$ in particular, ${}^{0}\hspace{-1pt}{A}=\big\{\langle\rangle\big\};$
   \item [\ding{46}]
      $\mathsurround=0pt
      {}^{{<}\alpha}\hspace{-1pt}{A}
      \coloneq\bigcup_{\beta\in\alpha}{}^\beta\hspace{-1.5pt}{A}
      \quad$\textup{(}here $\alpha$ is an ordinal\textup{)}.
   \end{itemize}
\end{terminology}

Also we work with partial orders and then we use the following terminology:
\begin{terminology}
   Suppose
   $\mathcal{P}=({Q},{\vartriangleleft})$ is a strict partial order; that is, ${\vartriangleleft}$ is irreflexive and transitive on~${Q}.$ Let ${x},{y}\in{Q}$ and ${A}\subseteq {Q}.$

   \begin{itemize}
   \item [\ding{46}]
      $\mathsurround=0pt
      \nodes\mathcal{P}=\nodes({Q},{\vartriangleleft})
      \coloneq{Q};$
   \item [\ding{46}]
      $\mathsurround=0pt
      {x}<_\mathcal{P} {y}
      \quad{\colon}{\longleftrightarrow}\quad
      {x}\vartriangleleft{y};$
   \item [\ding{46}]
      $\mathsurround=0pt
      {x}\leqslant_\mathcal{P} {y}
      \quad{\colon}{\longleftrightarrow}\quad
      {x}<_\mathcal{P}{y}\enskip\mathsf{or}\enskip{x}={y};$
   \item [\ding{46}]
      $\mathsurround=0pt
      {x}{\upspoon}_{\hspace{-1.5pt}\mathcal{P}}\coloneq\{{v}\in
      \nodes\mathcal{P}:{v}<_\mathcal{P} {x}\},\quad
      {x}{\downspoon}_{\hspace{-0.5pt}\mathcal{P}}\coloneq
      \{{v}\in\nodes\mathcal{P}:{v}>_\mathcal{P} {x}\};$
   \item [\ding{46}]
      $\mathsurround=0pt
      {x}{\upfilledspoon}_{\hspace{-1.5pt}\mathcal{P}}\coloneq
      \{{v}\in\nodes\mathcal{P}:{v}\leqslant_\mathcal{P} {x}\},\quad {x}{\downfilledspoon}_{\hspace{-0.5pt}\mathcal{P}}\coloneq
      \{{v}\in\nodes\mathcal{P}:{v}\geqslant_\mathcal{P} {x}\};$
   \item [\ding{46}]
      $\mathsurround=0pt
      {A}{\upfootline}_{\hspace{-1.5pt}\mathcal{P}}\coloneq
      \bigcup\{{v}{\upfilledspoon}_{\hspace{-1.5pt}\mathcal{P}}:{v}\in{A}\},\quad
      {A}{\downfootline}_{\hspace{-0.5pt}\mathcal{P}}\coloneq
      \bigcup\{{v}{\downfilledspoon}_{\hspace{-0.5pt}\mathcal{P}}:{v}\in{A}\};$
   \item [\ding{46}]
      $\mathsurround=0pt
      \sons_{\mathcal{P}}({x})\coloneq
      \{{s}\in\nodes\mathcal{P}:{x}<_\mathcal{P}{s}\enskip\mathsf{and}\enskip
      {x}{\downspoon}_{\hspace{-0.5pt}\mathcal{P}}\cap
      {s}{\upspoon}_{\hspace{-1.5pt}\mathcal{P}}=\varnothing\};$
   \item [\ding{46}]
      $\mathsurround=0pt
      {A}\,$ is a \emph{chain} in $\mathcal{P}
      \quad{\colon}{\longleftrightarrow}\quad
      \forall{v},{w}\in {A}
      \;[
      {v}\leqslant_\mathcal{P} {w}\enskip\mathsf{or}\enskip{v}>_\mathcal{P} {w}
      ];$
   \item [\ding{46}]
      $\mathsurround=0pt
      \mathcal{P}\,$ has \emph{bounded chains}$\quad{\colon}{\longleftrightarrow}\quad$

      for each nonempty chain ${C}$ in $\mathcal{P}$ there is ${v}\in\nodes\mathcal{P}$ such that
      ${C}\subseteq {v}{\upfilledspoon}_{\hspace{-1.5pt}\mathcal{P}};$
   \item [\ding{46}]
      $\mathsurround=0pt
      \maxel{\mathcal{P}}\coloneq
      \{{m}\in\nodes\mathcal{P}:
      {m}{\downspoon}_{\hspace{-0.5pt}\mathcal{P}}=\varnothing\},\quad
      \minel{\mathcal{P}}\coloneq
      \{{m}\in\nodes\mathcal{P}:
      {m}{\upspoon}_{\hspace{-1.5pt}\mathcal{P}}=\varnothing\};$
   \item [\ding{46}]
      $\mathsurround=0pt
      {0}_{\mathcal{P}}\coloneq$ the node such that
      $({0}_{\mathcal{P}}){\downfilledspoon}_{\hspace{-0.5pt}\mathcal{P}}=\nodes\mathcal{P}\quad$(here $\mathcal{P}$ is a partial order that has such node).
   \end{itemize}
\end{terminology}

When a partial order is a (set-theoretic) tree, we use the following terminology:

\begin{terminology}\label{not.trees}
   Suppose
   $\mathcal{T}\hspace{-1pt}$ is a tree;  that is, $\mathcal{T}\hspace{-1pt}$ is a strict partial order
   such that for each ${x}\in\nodes\mathcal{T}\hspace{-1pt},$ the set ${x}{\upspoon}_{\hspace{-1.5pt}\mathcal{T}}$ is well-ordered by ${<}_{\mathcal{T}}.$  Let ${x}\in\nodes\mathcal{T}\hspace{-1pt},$ let $\alpha$ be  an ordinal, and let $\kappa$ be a cardinal.

   \begin{itemize}
   \item [\ding{46}]
      $\mathsurround=0pt
      \height_{\mathcal{T}}({x})\coloneq$
      the ordinal isomorphic to $( {x}{\upspoon}_{\hspace{-1.5pt}\mathcal{T}}, <_{\mathcal{T}});$
   \item [\ding{46}]
      $\mathsurround=0pt
      \level_{\mathcal{T}}(\alpha)\coloneq\big\{{v}\in\nodes\mathcal{T}:
      \height_{\mathcal{T}}({v})=\alpha\big\};$
   \item [\ding{46}]
      $\mathsurround=0pt
      \height\mathcal{T}\coloneq$
      the minimal ordinal $\beta$ such that $\level_{\mathcal{T}}(\beta)=\varnothing;$
   \item [\ding{46}]
      $\mathsurround=0pt
      {B}\,$ is a \emph{branch} in $\mathcal{T}
      \quad{\colon}{\longleftrightarrow}\quad
      {B}$ is a ${\subseteq}\!$-maximal chain in $\mathcal{T};$
   \item [\ding{46}]
      $\mathsurround=0pt
      \mathcal{T}\,$ is $\kappa\!$-\emph{branching} $\quad{\colon}{\longleftrightarrow}\quad
      \forall{v}\in\nodes\mathcal{T}\setminus\maxel\mathcal{T}
      \ \big[
      {|}\sons_{\mathcal{T}}({v}){|}=\kappa
      \big].$
   \end{itemize}
\end{terminology}

Finally, we work with foliage trees, which where introduced in \cite{my.paper}.
Recall that a \emph{foliage tree} is a pair $\mathbf{F}=(\mathcal{T}\hspace{-1pt},{l}) $ such that $\mathcal{T}\hspace{-1pt}$ is a tree and ${l}$~is a function with $\domain{l}=\nodes\mathcal{T}.$
For each ${x}\in\nodes\mathcal{T}\hspace{-1pt},$ the ${l}({x})$ is called the \emph{leaf} of $\mathbf{F}\hspace{-1pt}$ at node ${X}\hspace{-1pt}$ and is denoted by $\mathbf{F}_{\hspace{-1.2pt}{x}};$ the tree $\mathcal{T}\hspace{-1pt}$ is called the \emph{skeleton} of~$\mathbf{F}\hspace{-1pt}$ and is denoted by $\skeleton\mathbf{F}.$ We adopt the following convention: If $\mathbf{F}\hspace{-1pt}$ is a foliage tree and $\bullet$ is a notation that can be applied to a tree, then $\bullet(\mathbf{F})$ is an abbreviation for $\bullet(\skeleton\mathbf{F});$ for example,
${x}<_\mathbf{F}{y}$ stands for ${x}<_{\skeleton\mathbf{F}}{y}.$ Also we use the following terminology:

\begin{terminology}\label{not.fol.trees}
   Suppose $\mathbf{F}\hspace{-1pt}$ is a foliage tree, ${v}\in\nodes\mathbf{F},$ ${A}\subseteq\nodes\mathbf{F},$ ${X}\hspace{-1pt}$ is a space, $\alpha$ is an ordinal, and $\kappa$ is a cardinal.

   \begin{itemize}
   \item [\ding{46}]
      $\mathsurround=0pt
      \flesh\mathbf{F}\coloneq
      \bigcup\{\mathbf{F}_{\hspace{-1.2pt}{x}}:{x}\in\nodes\mathbf{F}\};$
   \item [\ding{46}]
      $\mathsurround=0pt
      \flesh_\mathbf{F}({A})\coloneq
      \bigcup\{\mathbf{F}_{\hspace{-1.2pt}{x}}:{x}\in{A}\};$
   \item [\ding{46}]
      $\mathsurround=0pt
      \shoot_\mathbf{F}({v})\coloneq
      \big\{\flesh_\mathbf{F}({C}):
      {C}$ is a cofinite subset of $\sons_{\mathbf{F}}({v})\big\};$
   \item [\ding{46}]
      $\mathsurround=0pt
      \scope_\mathbf{F}({a})\coloneq
      \{{x}\in\nodes\mathbf{F}:\mathbf{F}_{\hspace{-1.2pt}{x}}\ni{a}\};$
   \item [\ding{46}]
      $\mathsurround=0pt
      \mathbf{F}\,$ has \emph{nonempty leaves}$
      \quad{\colon}{\longleftrightarrow}\quad
      \forall{x}\in\nodes\mathbf{F}
      \;[\mathbf{F}_{\hspace{-1.2pt}{x}}\neq\varnothing
      ];$
   \item [\ding{46}]
      $\mathsurround=0pt
      \mathbf{F}\,$ is \emph{nonincreasing}$
      \quad{\colon}{\longleftrightarrow}\quad
      \forall{x},{y}\in\nodes\mathbf{F}
      \;[
      {y}\geqslant_{\mathbf{F}} {x}\rightarrow\mathbf{F}_{\hspace{-1.2pt}{y}}\subseteq\mathbf{F}_{\hspace{-1.2pt}{x}}
      ];$
   \item [\ding{46}]
      $\mathsurround=0pt
      \mathbf{F}\,$ has \emph{strict branches}$
      \quad{\colon}{\longleftrightarrow}\quad$

      $\mathsurround=0pt
      \nodes\mathbf{F}\neq\varnothing\,$ and for each branch ${B}$ in $\mathbf{F},$ the
      $\bigcap_{{x}\in{B}}\mathbf{F}_{\hspace{-1.2pt}{x}}$ is a singleton;
   \item [\ding{46}]
      $\mathsurround=0pt
      \mathbf{F}\,$ is \emph{locally strict}$
      \quad{\colon}{\longleftrightarrow}\quad
      \forall{x}\in\nodes\mathbf{F}\,{\setminus}\maxel\mathbf{F}
      \ [\mathbf{F}_{\hspace{-1.2pt}{x}}\equiv
      \bigsqcup_{{s}\in\sons_{\mathbf{F}}
      ({x})}\mathbf{F}_{\hspace{-1.2pt}{s}}];$
   \item [\ding{46}]
      $\mathsurround=0pt
      \mathbf{F}\,$ is \emph{open} in  ${X}
      \quad{\colon}{\longleftrightarrow}\quad
      \forall{z}\in\nodes\mathbf{F}
      \ [\mathbf{F}_{\hspace{-1.2pt}{z}}$ is an open subset of ${X}
      ];$
   \item [\ding{46}]
      $\mathsurround=0pt
      \mathbf{F}\,$ is a \emph{foliage} $\alpha,\kappa\!$-\emph{tree}$
      \quad{\colon}{\longleftrightarrow}\quad
      \skeleton\mathbf{F}\hspace{-1pt}$
      is isomorphic to the tree $({}^{{<}\alpha}\kappa,{\subset});$
   \item [\ding{46}]
      $\mathsurround=0pt
      \mathbf{F}\,$ is a \emph{Baire foliage tree} on  ${X}
      \quad{\colon}{\longleftrightarrow}\quad
      \mathbf{F}\hspace{-1pt}$ is an open in ${X}$ locally strict foliage $ \omega,\omega\!$-tree with strict branches
      and such that $\mathbf{F}_{\hspace{-1.2pt}{0}_{\mathbf{F}}}={X};$
   \item [\ding{46}]
      $\mathsurround=0pt
      \mathbf{F}\,$ \emph{grows into} ${X}
      \quad{\colon}{\longleftrightarrow}\quad
      \forall\hspace{-1pt}{p}\hspace{1pt}{\in}{X}\ \forall{U}{\in}\nbhds({p},{X})\ \exists{z}{\in}\scope_\mathbf{F}({p})
      \ \big[\shoot_\mathbf{F}({z})\gg\{{U}\}\big];$
   \item [\ding{46}]
      $\mathsurround=0pt
      \mathbf{F}\,$ is a $\pi\!$-\emph{tree} on  ${X}
      \quad{\colon}{\longleftrightarrow}\quad
      \mathbf{F}\hspace{-1pt}$ is a Baire foliage tree on ${X}\hspace{-1pt}$ and $\mathbf{F}\hspace{-1pt}$ grows into ${X};$
   \item [\ding{46}]
      $\mathsurround=0pt
      \mathbf{S}$ $\coloneq$ the \emph{standard foliage tree} of ${}^{\omega}\omega$ $\coloneq$ the foliage tree such that
      \begin{itemize}
      \item[\ding{226}]
         $\mathsurround=0pt
         \skeleton\mathbf{S}\coloneq({}^{{<}\omega}\omega,\subset)\enskip$and
      \item[\ding{226}]
         $\mathsurround=0pt
         \mathbf{S}_{{x}}\coloneq\{{p}\in{}^{\omega}\omega:
         {x}\subseteq{p}\}\enskip$for every ${x}\in{}^{{<}\omega}\omega;$
      \end{itemize}
   \item [\ding{46}]
      $\mathsurround=0pt
      \mathcal{N}$ $\coloneq$ the \emph{Baire space} $\coloneq$ the space $({}^{\omega}\omega,\tau_{\scriptscriptstyle\mathcal{N}}),$ where $\tau_{\scriptscriptstyle\mathcal{N}}$ is the Tychonoff product topology with $\omega$ carrying 
      the discrete topology.
   \end{itemize}
\end{terminology}

\begin{lem}[{Lemma~13 in \cite{my.paper}}]\label{lem.pi.and.B.f.trees.vs.S}\mbox{ }

   \begin{itemize}
   \item[\textup{(a)}]
      $\mathsurround=0pt
      \hspace{-0.7pt}\{\mathbf{S}_{{x}}:{x}\in{}^{{<}\omega}\omega\}\,$
      is a base for $\mathcal{N}.$
   \item[\textup{(b)}]
      $\mathsurround=0pt
      \mathbf{S}\,$ is a $\pi\!$-tree on $\mathcal{N}.$
   \item[\textup{(c)}]
      $\mathsurround=0pt
      \mathbf{S}\,$ is a Baire foliage tree on a space $({}^{\omega}\omega,\tau)
      \quad\textup{\textsf{iff}}\quad
      \tau\supseteq\tau_{\scriptscriptstyle\mathcal{N}}.$
   \end{itemize}
\end{lem}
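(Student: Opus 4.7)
The plan is to prove (a), then derive (b) and (c) from it.

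For part (a), I will argue that each cylinder $\mathbf{S}_{x}$ equals $\{p:p(0)=x(0),\ldots,p(\length{x}-1)=x(\length{x}-1)\}$, a finite intersection of subbasic sets of the product topology $\tau_{\scriptscriptstyle\mathcal{N}}$, and is therefore open. For the base property, given any $\tau_{\scriptscriptstyle\mathcal{N}}$-basic open set $V=\{p:p(i_{1})=n_{1},\ldots,p(i_{k})=n_{k}\}$ and any point $q\in V$, taking $m:=1+\max\{i_{1},\ldots,i_{k}\}$ and $x:=q\upharpoonright m$ gives $q\in\mathbf{S}_{x}\subseteq V$, so every $\tau_{\scriptscriptstyle\mathcal{N}}$-open set is a union of cylinders.

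For part (b), I will verify each clause of the Baire-foliage-tree definition plus the growing condition. The skeleton of $\mathbf{S}$ is literally $({}^{{<}\omega}\omega,\subset)$, so $\mathbf{S}$ is a foliage $\omega,\omega\!$-tree; the leaves are open by (a); and $\mathbf{S}_{\langle\rangle}={}^{\omega}\omega$ is the whole space. Local strictness amounts to noting that each $p\in\mathbf{S}_{x}$ lies in exactly one son, namely $\mathbf{S}_{x\hat{\:}\langle p(\length{x})\rangle}$. Branches of $\skeleton\mathbf{S}$ correspond to elements of ${}^{\omega}\omega$ via $B\mapsto\bigcup B$, and for each such branch the intersection $\bigcap_{x\in B}\mathbf{S}_{x}$ is a singleton because its unique element is determined coordinatewise. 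Finally, growing follows because for any $p\in\mathcal{N}$ and $U\in\nbhds(p,\mathcal{N})$, part~(a) yields some $x\subset p$ with $\mathbf{S}_{x}\subseteq U$; taking $z:=x$ and $C:=\sons_{\mathbf{S}}(z)$ (which is a cofinite subset of itself) gives $\flesh_{\mathbf{S}}(C)=\mathbf{S}_{x}\neq\varnothing$ and $\flesh_{\mathbf{S}}(C)\subseteq U$ by local strictness, hence $\shoot_{\mathbf{S}}(z)\gg\{U\}$.

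For part (c), the key observation is that every clause in the definition of a Baire foliage tree on $({}^{\omega}\omega,\tau)$ except openness of the leaves is topology-independent and has already been verified in~(b). So the implication $(\Leftarrow)$ is immediate: when $\tau\supseteq\tau_{\scriptscriptstyle\mathcal{N}}$, each leaf is $\tau$-open by~(a). Conversely, if $\mathbf{S}$ is a Baire foliage tree on $({}^{\omega}\omega,\tau)$ then each $\mathbf{S}_{x}$ is $\tau$-open, and since (a) says the cylinders form a base for $\tau_{\scriptscriptstyle\mathcal{N}}$, every $\tau_{\scriptscriptstyle\mathcal{N}}$-open set is a union of $\tau$-open sets and lies in~$\tau$.

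The only step that invites a second look is the growing condition in~(b), because $\shoot_{\mathbf{S}}(z)$ is defined using cofinite subsets of $\sons_{\mathbf{S}}(z)$ rather than single children; but once one notices that $\sons_{\mathbf{S}}(z)$ is a cofinite subset of itself and that $\flesh_{\mathbf{S}}(\sons_{\mathbf{S}}(z))=\mathbf{S}_{z}$ by local strictness, the argument collapses to the base property of~(a).
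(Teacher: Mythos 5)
Your proof is correct. Note, however, that the paper does not prove this lemma at all: it is imported verbatim as Lemma~13 of \cite{my.paper}, so there is no in-paper argument to compare against. Your direct verification from the definitions is complete and sound, and you correctly isolate the one genuinely delicate point, namely that the growing condition quantifies over $\shoot_{\mathbf{S}}({z})$, whose elements are fleshes of \emph{cofinite} sets of sons rather than single leaves; taking ${C}=\sons_{\mathbf{S}}({z})$ and using local strictness to get $\flesh_{\mathbf{S}}({C})=\mathbf{S}_{{z}}$ resolves it. The only cosmetic remark is that in the growing argument ${U}$ is an arbitrary (not necessarily open) neighbourhood, so one should first pass to an open ${V}$ with ${p}\in{V}\subseteq{U}$ before invoking (a); this is implicit in your write-up and harmless.
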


\begin{lem}\label{lem.pi.base.pseudo.base}
   If $\,\mathbf{F}\hspace{-1pt}$ is a $\pi\!$-tree on a space ${X},$ then

   \begin{itemize}
   \item[\ding{226}]
      $\mathsurround=0pt
      \hspace{-0.7pt}\{\mathbf{F}_{\hspace{-1.2pt}{v}}:{v}\in\nodes\mathbf{F}\}\,$ is a countable $\pi\!$-base for ${X},$
   \item[\ding{226}]
      each $\mathbf{F}_{\hspace{-1.2pt}{v}}$ is closed-and-open in ${X},$ and
   \item[\ding{226}]
      $\mathsurround=0pt
      \bigcap\{\mathbf{F}_{\hspace{-1.2pt}{v}}:\mathbf{F}_{\hspace{-1.2pt}{v}}\ni{p}\}=\{{p}\}\quad$ for all ${p}\in\!{X}.$
      \hfill$\qed$
   \end{itemize}
\end{lem}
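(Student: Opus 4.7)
My plan is to read off the three bullets directly from the structural hypotheses on $\mathbf{F}$: that it is open, locally strict, has strict branches, that $\mathbf{F}_{0_\mathbf{F}}=X$, and that $\mathbf{F}$ grows into $X$. Two preparatory observations set everything up.

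First, since $\mathbf{F}$ is a foliage $\omega,\omega$-tree, $\skeleton\mathbf{F}$ is isomorphic to $({}^{{<}\omega}\omega,\subset)$, so $\nodes\mathbf{F}$ is countable, and hence so is the family $\{\mathbf{F}_v:v\in\nodes\mathbf{F}\}$. Second, starting from $\mathbf{F}_{0_\mathbf{F}}=X$ and iterating the locally strict property, a routine induction on $n\in\omega$ shows that each level partitions the space: $X\equiv\bigsqcup_{u\in\level_\mathbf{F}(n)}\mathbf{F}_u$. As a byproduct, no leaf is empty: given any $v\in\nodes\mathbf{F}$, extend $v$ downward through successive sons to a branch $B$; strict branches then yields a point in $\bigcap_{x\in B}\mathbf{F}_x\subseteq\mathbf{F}_v$.

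The second and third bullets now follow from these preliminaries with no further input. For closedness, $\mathbf{F}_v$ is open by hypothesis, and setting $n=\height_\mathbf{F}(v)$, the level partition rewrites $X\setminus\mathbf{F}_v$ as $\bigcup\{\mathbf{F}_u:u\in\level_\mathbf{F}(n)\setminus\{v\}\}$, a union of open sets. For the pseudobase claim, fix $p\in X$; the level partition forces $\scope_\mathbf{F}(p)$ to contain exactly one node per level, and the resulting set is a chain (the unique level-$n$ scope-element must be the father of the unique level-$(n{+}1)$ scope-element, by locally strict), hence a branch; strict branches then collapses $\bigcap_{v\in\scope_\mathbf{F}(p)}\mathbf{F}_v$ to a singleton, which is $\{p\}$ because $p$ lies in the intersection.

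The first bullet needs the grows-into hypothesis. Given a nonempty open $U\subseteq X$, pick $p\in U$; since $U\in\nbhds(p,X)$, there is $z\in\scope_\mathbf{F}(p)$ with $\shoot_\mathbf{F}(z)\gg\{U\}$, which by unpacking the definitions gives a cofinite $C\subseteq\sons_\mathbf{F}(z)$ with $\varnothing\neq\flesh_\mathbf{F}(C)\subseteq U$. Any $s\in C$ then yields a nonempty leaf $\mathbf{F}_s\subseteq U$. The only point that needs real care is this last step, translating the $\pi$-refinement statement (which controls only a cofinite union of son-leaves) into a statement about an individual leaf; this works because each $\mathbf{F}_s$ with $s\in C$ is automatically a subset of $\flesh_\mathbf{F}(C)\subseteq U$ and is nonempty by the first paragraph.
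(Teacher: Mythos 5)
Your proof is correct: the paper states this lemma with the proof omitted (marked $\qed$ as immediate), and your argument supplies exactly the routine verification one would expect, deriving the level-by-level partition of $X$ from local strictness, nonemptiness of leaves from strict branches, and the $\pi$-base property from the grows-into condition via a cofinite set of sons. All steps check out, including the two points that genuinely need care — that $\scope_{\mathbf{F}}(p)$ is a branch (one node per level, linked by the father relation) and that the $\pi$-refinement of $\{U\}$ by a shoot yields an individual nonempty leaf inside $U$.
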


\section{New notions: isomorphism and spectrum}

The notion of isomorphism between foliage trees allows to simplify proofs (see the proof of Theorem~\ref{th.1}) in the following way: When we have a $\pi\!$-tree $\mathbf{F}\hspace{-1pt}$ on a space ${X},$ we may (by using (c) of Lemma~\ref{lem.F.vs.S.by.isomorph} and (c) of Lemma~\ref{lem.pi.and.B.f.trees.vs.S}) assume ``without loss of generality'' that $\mathbf{F}=\mathbf{S}$ and ${X}=({}^{\omega}\omega,\tau)$ with $\tau\supseteq\tau_{\scriptscriptstyle\mathcal{N}}.$


\begin{deff}\label{def.isomorph}
   An \textbf{isomorphism} between foliage trees $\mathbf{F}\hspace{-1pt}$ and $\mathbf{G}$ is a pair
   $(\varphi,\psi)$ such that

   \begin{itemize}
   \item[\ding{226}]
      $\mathsurround=0pt
      \varphi\,$ is an order isomorphism from $\skeleton\mathbf{F}\hspace{-1pt}$ onto $\skeleton\mathbf{G},$
   \item[\ding{226}]
      $\mathsurround=0pt
      \psi\,$ is a bijection from $\flesh\mathbf{F}\hspace{-1pt}$ onto $\flesh\mathbf{G},$ and
   \item[\ding{226}]
      $\mathsurround=0pt
      \psi[\mathbf{F}_{\hspace{-1.2pt}{x}}]=\mathbf{G}_{\varphi({x})}\quad$ for all ${x}\in\nodes\mathbf{F}.$
   \end{itemize}
\end{deff}

\begin{lem}\label{lem.F.vs.S.by.isomorph}
   Suppose that $\hspace{1pt}\mathbf{F}\hspace{-1pt}$ is a foliage tree and ${X}\hspace{-1pt}$ is a space.

   \begin{itemize}
   \item[\textup{(a)}]
      $\mathsurround=0pt
      \mathbf{F}\,$ is a locally strict foliage $\omega,\omega\!$-tree with strict branches
      \quad\textup{\textsf{iff}}

      $\mathsurround=0pt
      \mathbf{F}\,$ is isomorphic to $\mathbf{S}.$
   \item[\textup{(b)}]
      $\mathsurround=0pt
      \mathbf{F}\,$ is a Baire foliage tree on ${X}\hspace{-1pt}$
      \quad\textup{\textsf{iff}}

      there exist an isomorphism $(\varphi,\psi)$ between $\mathbf{F}\hspace{-1pt}$ and $\mathbf{S}$ and a topology $\tau$ on ${}^{\omega}\omega$ such that
      \begin{itemize}
      \item[\ding{226}]
         $\mathsurround=0pt
         \psi\,$ is a homeomorphism from ${X}\hspace{-1pt}$ onto $({}^{\omega}\omega,\tau)$ and
      \item[\ding{226}]
         $\mathsurround=0pt
         \mathbf{S}\,$ is a Baire foliage tree on $({}^{\omega}\omega,\tau).$
      \end{itemize}
   \item[\textup{(c)}]
      $\mathsurround=0pt
      \mathbf{F}\,$ is a $\pi\!$-tree on ${X}\hspace{-1pt}$
      \quad\textup{\textsf{iff}}

      there exist an isomorphism $(\varphi,\psi)$ between $\mathbf{F}\hspace{-1pt}$ and $\mathbf{S}$ and a topology $\tau$ on ${}^{\omega}\omega$ such that
      \begin{itemize}
      \item[\ding{226}]
         $\mathsurround=0pt
         \psi\,$ is a homeomorphism from ${X}\hspace{-1pt}$ onto $({}^{\omega}\omega,\tau)$ and
      \item[\ding{226}]
         $\mathsurround=0pt
         \mathbf{S}\,$ is a $\pi\!$-tree on $({}^{\omega}\omega,\tau).$
      \end{itemize}
   \end{itemize}
\end{lem}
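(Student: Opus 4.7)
The three parts cascade: (a) produces the skeleton-and-leaf isomorphism, (b) bolts a topology onto $\psi$, and (c) layers the ``grows into'' condition on top. For part~(a), the $(\Leftarrow)$ direction is routine, as local strictness, the $\omega,\omega$-tree property, and strict branches are all isomorphism invariants and $\mathbf{S}$ itself has them by inspection. For the $(\Rightarrow)$ direction, the $\omega,\omega$-tree hypothesis supplies $\varphi:\skeleton\mathbf{F}\to\skeleton\mathbf{S}$ directly from the definition. To produce $\psi$, I would first push local strictness up from the root to obtain $\flesh\mathbf{F}=\mathbf{F}_{0_\mathbf{F}}$, and then use local strictness level by level to show that for each $a\in\flesh\mathbf{F}$ the scope $\scope_\mathbf{F}(a)$ picks exactly one son at each level; since $\height\mathbf{F}=\omega$, this chain is automatically a branch. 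Strict branches then make $a\mapsto\scope_\mathbf{F}(a)$ a bijection from $\flesh\mathbf{F}$ onto the set of branches of $\mathbf{F}$, and the analogous bijection for $\mathbf{S}$ is $p\mapsto\{p\upharpoonright n:n<\omega\}$. Define $\psi(a)$ to be the unique $p$ whose branch equals $\varphi[\scope_\mathbf{F}(a)]$; then $\psi[\mathbf{F}_x]=\mathbf{S}_{\varphi(x)}$ follows from the chain of equivalences $a\in\mathbf{F}_x\Leftrightarrow x\in\scope_\mathbf{F}(a)\Leftrightarrow\varphi(x)\in\varphi[\scope_\mathbf{F}(a)]\Leftrightarrow\psi(a)\in\mathbf{S}_{\varphi(x)}$.

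For part~(b), in the $(\Rightarrow)$ direction I would apply (a) to obtain $(\varphi,\psi)$ and declare $\tau$ to be the pushforward topology on ${}^\omega\omega$, which by construction makes $\psi:X\to({}^\omega\omega,\tau)$ a homeomorphism. Each $\mathbf{S}_{\varphi(x)}=\psi[\mathbf{F}_x]$ is then open in $\tau$ because $\mathbf{F}_x$ is open in $X$, and $\varphi$ ranges over all nodes, so $\mathbf{S}$ is open in $({}^\omega\omega,\tau)$. The remaining three Baire-foliage-tree conditions for $\mathbf{S}$ are intrinsic, and $\mathbf{S}_{0_\mathbf{S}}={}^\omega\omega$ by definition. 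The $(\Leftarrow)$ direction runs the same transport in reverse: use $\psi^{-1}$ to pull openness of the $\mathbf{S}_y$ back to openness of the $\mathbf{F}_x$, and note $\mathbf{F}_{0_\mathbf{F}}=\psi^{-1}[\mathbf{S}_{0_\mathbf{S}}]=\psi^{-1}[{}^\omega\omega]=X$.

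Part~(c) reduces to (b) plus the observation that ``grows into'' is formulated entirely in terms of points, neighbourhoods, and the operators $\scope$, $\shoot$, $\sons$, and $\pi$-refinement, all of which are carried faithfully by $(\varphi,\psi)$: concretely, $\scope_\mathbf{S}(\psi(p))=\varphi[\scope_\mathbf{F}(p)]$, $\sons_\mathbf{S}(\varphi(z))=\varphi[\sons_\mathbf{F}(z)]$, and $\psi$ sends $\nbhds(p,X)$ bijectively onto $\nbhds(\psi(p),({}^\omega\omega,\tau))$. So ``$\mathbf{F}$ grows into $X$'' and ``$\mathbf{S}$ grows into $({}^\omega\omega,\tau)$'' are equivalent under the identification, and combining this with (b) gives (c). I expect the main obstacle to live in (a): verifying that $\scope_\mathbf{F}(a)$ meets every finite level and is therefore a branch — this is exactly where the locally strict and $\omega,\omega$-tree hypotheses do the real work — after which the leaf identity $\psi[\mathbf{F}_x]=\mathbf{S}_{\varphi(x)}$ and everything in (b) and (c) is routine transport of definitions along the isomorphism.
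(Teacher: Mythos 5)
Your proposal is correct and follows essentially the same route as the paper: part (a) carries the real content, and (b), (c) are the same push-forward of the topology along $\psi$ and transport of the ``grows into'' condition. The only cosmetic difference is the direction of the bijection --- the paper uses strict branches to define $\chi\colon{}^{\omega}\omega\to\flesh\mathbf{F}$ by $\{\chi({p})\}=\bigcap\{\mathbf{F}_{\varphi^{-1}({x})}:{x}\subseteq{p}\}$ and sets $\psi=\chi^{-1}$, whereas you build $\psi$ directly from $a\mapsto\scope_{\mathbf{F}}(a)$; these are inverse descriptions of the same map, resting on the same use of local strictness and strict branches.
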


\begin{proof}
   (a) Suppose that $\mathbf{F}\hspace{-1pt}$ is a locally strict foliage $\omega,\omega\!$-tree with strict branches. Let $\varphi$ be an order isomorphism from $\skeleton\mathbf{F}\hspace{-1pt}$ onto the tree $({}^{{<}\omega}\omega,\subset)=\skeleton\mathbf{S}.$ For each ${p}\in{}^{\omega}\omega,$ the set $\{{x}\in{}^{{<}\omega}\omega:{x}\subseteq{p}\}$ is a branch in $\mathbf{S},$ so since $\mathbf{F}$ has strict branches it follows that there is a point $\chi({p})$ in $\flesh\mathbf{F}\hspace{-1pt}$ such that
   $$
   \textstyle\big\{\chi({p})\big\}=\bigcap\{\mathbf{F}_{\hspace{-1.2pt}\varphi^{{-}{1}}({x})}:
   {x}\in{}^{{<}\omega}\omega\enskip\textsf{and}\enskip{x}\subseteq{p}\}.
   $$
   Then it is not hard to prove that the function $\chi\colon{}^{\omega}\omega\to\flesh\mathbf{F}\hspace{-1pt}$ is a bijection and $(\varphi,\chi^{{-}{1}})$ is an isomorphism between $\mathbf{F}\hspace{-1pt}$ and $\mathbf{S}.$ The ${\leftarrow}$ direction follows from (b) of Lemma~\ref{lem.pi.and.B.f.trees.vs.S}.

   (b) Suppose that $\mathbf{F}\hspace{-1pt}$ is a Baire foliage tree on ${X}.$
   Let $(\varphi,\psi)$ be an isomorphism between $\mathbf{F}\hspace{-1pt}$ and $\mathbf{S},$ which exists by (a).
   Then $\psi$ is a bijection from ${X}\hspace{-1pt}$ onto ${}^{\omega}\omega.$
   Put
   $$
   \tau\coloneq\big\{\psi[{U}]:{U}\text{ is an open subset of }{X}\big\};
   $$
   clearly, $\tau$ is a topology on ${}^{\omega}\omega$ and $\psi$ is a homeomorphism from ${X}\hspace{-1pt}$ onto $({}^{\omega}\omega,\tau).$ It follows that $\mathbf{S}$ is a Baire foliage tree on $({}^{\omega}\omega,\tau)$ because $\mathbf{F}\hspace{-1pt}$ is a Baire foliage tree on ${X}.$ The ${\leftarrow}$ direction is similar. Part (c) can be proved by the same argument.
\end{proof}


\begin{sle}\label{cor.scope.in.Baire.f.tree}
   Suppose that $\hspace{1pt}\mathbf{F}\hspace{-1pt}$ is a Baire foliage tree on a space ${X}\hspace{-1pt}$ and ${p}\in{X}.$

   \begin{itemize}
   \item [\textup{(a)}]
      $\mathsurround=0pt
      \mathbf{F}\,$ is nonincreasing, $\flesh\mathbf{F}=\mathbf{F}_{\hspace{-1.2pt}{0}_{\mathbf{F}}},$ and $\,\height\mathbf{F}=\omega;$
   \item [\textup{(b)}]
      $\mathsurround=0pt
      \mathbf{F}_{\hspace{-1.2pt}{v}}\,$ is closed-and-open in ${X}\hspace{-1pt}$ and
      $\:{|}\mathbf{F}_{\hspace{-1.2pt}{v}}{|}={2}^{\omega}\enskip$ for all ${v}\in\nodes\mathbf{F};$
   \item [\textup{(c)}]
      $\mathsurround=0pt
      \scope_{\mathbf{F}}({p})\,$ is a branch in $\mathbf{F};$
   \item [\textup{(d)}]
      $\mathsurround=0pt
      \forall\hspace{-1pt}{n}{\in}\hspace{1pt}\omega\ \exists!{v}{\in}\hspace{-1pt}\scope_{\mathbf{F}}({p})
      \ \big[\height_{\mathbf{F}}({v})={n}\big].$
      %
   \end{itemize}
\end{sle}

\begin{proof}
  This corollary is a consequence of (b) of Lemma~\ref{lem.F.vs.S.by.isomorph} and (c) of Lemma~\ref{lem.pi.and.B.f.trees.vs.S}.
\end{proof}

Now we introduce terminology that we need to formulate Theorems~\ref{th.1} and~\ref{th.2}.

\begin{deff}\label{not.flesh}
   Suppose $\mathbf{F}\hspace{-1pt}$ is a foliage tree and ${X}\hspace{-1pt}$ is a space.

   \begin{itemize}
   \item [\ding{46}]
      $\mathsurround=0pt
      \rise_{\mathbf{F}}({p},{U})\coloneq
      \big\{\height_{\mathbf{F}}({v}):{v}\in\scope_\mathbf{F}({p})\enskip\textsf{and}\enskip\shoot_{\mathbf{F}}({v})\gg\{{U}\}\big\};$
   \item [\ding{46}]
      $\mathsurround=0pt
      \riseee_{\mathbf{F}}({X})\coloneq
      \big\{\rise_{\mathbf{F}}({p},{U}):{p}\in{X}\enskip\textsf{and}\enskip{U}\in\nbhds({p},{X})\big\}.$
   \end{itemize}
\end{deff}

\begin{pri}\label{exmpl.rise.in.S}
   $\rise_{\mathbf{S}}({p},\mathbf{S}_{{p}{\upharpoonright}\hspace{0.3pt}{n}})=\omega\setminus{n}\enskip$ for all ${p}\in{}^{\omega}\omega$ and ${n}\in\omega.$

\end{pri}

\begin{lem}\label{rem.rise.vs.grow.into}
   Suppose that $\mathbf{F}\hspace{-1pt}$ is a foliage tree and ${X}\hspace{-1pt}$ is a space.
   \begin{itemize}

   \item[\textup{(a)}]
      $\mathsurround=0pt
      \vphantom{\bar{\big\langle\rangle}}
      \mathbf{F}\,$ grows into ${X}
      \quad\textup{\textsf{iff}}\quad
      \,\varnothing\nin\riseee_{\mathbf{F}}({X}).$
   \item[\textup{(b)}]
      If $\,\mathbf{F}\hspace{-1pt}$ is a $\pi\!$-tree on ${X}\hspace{-1pt}$ and ${p}\in{X},$
      then
      \begin{itemize}

      \item[\textup{(b1)}]
         the family $\big\{\rise_{\mathbf{F}}({p},{U}):{U}\in\nbhds({p},{X})\big\}$ has the \textup{\textsf{FIP}},
      \item[\textup{(b2)}]
         $\mathsurround=0pt
         {\displaystyle\bigcap}\big\{\rise_{\mathbf{F}}({p},{U}):{U}\in\nbhds({p},{X})\big\}=\varnothing,
         \vphantom{\bar{\big\langle\rangle}}$
         and
      \item[\textup{(b3)}]
         $\mathsurround=0pt
         \rise_{\mathbf{F}}({p},{U})\in[\omega]^{\omega}\enskip
         \vphantom{\bar{\big\langle\rangle}}$
         for all ${U}\in\nbhds({p},{X}).$
      \end{itemize}
   \end{itemize}
\end{lem}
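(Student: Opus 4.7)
The plan is to handle each clause by unwinding the definitions, exploiting the structural properties of $\mathbf{F}\hspace{-1pt}$ established in Corollary~\ref{cor.scope.in.Baire.f.tree}: the skeleton is $\omega$-branching with nonempty (indeed size-$2^{\omega}$) closed-and-open leaves, $\mathbf{F}\hspace{-1pt}$ is nonincreasing and locally strict, and $\scope_\mathbf{F}({p})$ is a branch containing a unique node of each finite height.

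Part (a) is immediate: $\varnothing\in\riseee_\mathbf{F}({X})$ says that for some ${p}\in{X}$ and some ${U}\in\nbhds({p},{X})$ no ${v}\in\scope_\mathbf{F}({p})$ satisfies $\shoot_\mathbf{F}({v})\gg\{{U}\}$, which is exactly the negation of $\mathbf{F}\hspace{-1pt}$ growing into ${X}$. For (b1), I would first record the monotonicity ${U}\subseteq{U}'\Rightarrow\rise_\mathbf{F}({p},{U})\subseteq\rise_\mathbf{F}({p},{U}')$, which drops straight out of the definition of $\pi$-refinement. Since $\nbhds({p},{X})$ is closed under finite intersections, applying (a) to ${U}_1\cap\cdots\cap{U}_k$ produces a common element of $\bigcap_i\rise_\mathbf{F}({p},{U}_i)$.

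For (b2), I would fix ${n}\in\omega$ and let ${v}_{n},{v}_{n+1}$ be the unique members of $\scope_\mathbf{F}({p})$ at heights ${n}$ and ${n}+1$; then ${U}:=\mathbf{F}_{{v}_{n+1}}$ is an open neighbourhood of ${p}$. By local strictness together with the nonemptiness of leaves, ${v}_{n+1}$ is the only son of ${v}_{n}$ whose leaf sits inside ${U}$, so for every cofinite ${C}\subseteq\sons_\mathbf{F}({v}_{n})$ the infinity of $\sons_\mathbf{F}({v}_{n})$ produces some ${s}\in{C}\setminus\{{v}_{n+1}\}$ witnessing $\flesh_\mathbf{F}({C})\not\subseteq{U}$. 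Hence $\shoot_\mathbf{F}({v}_{n})\not\gg\{{U}\}$, so ${n}\notin\rise_\mathbf{F}({p},{U})$, and varying ${n}$ yields (b2).

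Part (b3) is where I expect the real work. Given ${m}\in\omega$, the idea is to apply ``grows into'' to the neighbourhood ${W}:={U}\cap\mathbf{F}_{{v}_{m}}$ of ${p}$, obtaining ${z}\in\scope_\mathbf{F}({p})$ and a nonempty ${G}=\flesh_\mathbf{F}({C})\in\shoot_\mathbf{F}({z})$ with ${G}\subseteq{W}$. The delicate step is to rule out ${z}<_\mathbf{F}{v}_{m}$: if it held, then since ${z},{v}_{m}$ lie on the common branch $\scope_\mathbf{F}({p})$ the son ${z}'\in\sons_\mathbf{F}({z})\cap\scope_\mathbf{F}({p})$ on the path to ${v}_{m}$ would satisfy $\mathbf{F}_{{v}_{m}}\subseteq\mathbf{F}_{{z}'}$ (by nonincreasingness), and every ${s}\in{C}$ would satisfy $\varnothing\neq\mathbf{F}_{{s}}\subseteq{G}\subseteq\mathbf{F}_{{z}'}$, which by local strictness forces ${s}={z}'$, contradicting the cofiniteness of ${C}$ in the infinite set $\sons_\mathbf{F}({z})$. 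Hence $\height_\mathbf{F}({z})\geqslant{m}$, and ${W}\subseteq{U}$ yields $\height_\mathbf{F}({z})\in\rise_\mathbf{F}({p},{U})$, showing that $\rise_\mathbf{F}({p},{U})$ contains elements of arbitrarily large height and therefore lies in $[\omega]^{\omega}$.
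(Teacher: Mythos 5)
Your argument is correct, but it diverges from the paper's proof in the two nontrivial clauses. For (a) and (b1) you do exactly what the paper does (monotonicity of $\rise_{\mathbf{F}}({p},\cdot)$ plus closure of $\nbhds({p},{X})$ under finite intersections, then (a)). For (b2) the paper first transports everything to the standard tree $\mathbf{S}$ via the isomorphism of Lemma~\ref{lem.F.vs.S.by.isomorph} and then quotes Example~\ref{exmpl.rise.in.S} for the neighbourhoods $\mathbf{S}_{{q}{\upharpoonright}{n}}$; your direct argument with ${U}=\mathbf{F}_{{v}_{{n}+1}}$ is the same idea carried out in situ (it is essentially a proof of the relevant half of Example~\ref{exmpl.rise.in.S} without the change of coordinates), and it is sound because distinct sons of ${v}_{{n}}$ have disjoint nonempty leaves and $\sons_{\mathbf{F}}({v}_{{n}})$ is infinite, so no cofinite ${C}$ can have $\flesh_{\mathbf{F}}({C})\subseteq\mathbf{F}_{{v}_{{n}+1}}$. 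The real difference is (b3): the paper gets it for free from (b1) and (b2) --- a family with the \textsf{FIP} and empty total intersection cannot contain a finite member, since for a finite ${R}=\rise_{\mathbf{F}}({p},{U})$ one could pick, for each ${r}\in{R}$, a neighbourhood ${U}_{{r}}$ with ${r}\nin\rise_{\mathbf{F}}({p},{U}_{{r}})$ and obtain a finite subfamily with empty intersection. Your direct construction (apply ``grows into'' to ${U}\cap\mathbf{F}_{{v}_{{m}}}$ and rule out ${z}<_{\mathbf{F}}{v}_{{m}}$ by local strictness and cofiniteness) is correct but does by hand what the abstract argument gives immediately; on the other hand it yields the slightly stronger explicit fact that $\rise_{\mathbf{F}}({p},{U})$ meets $\omega\setminus{m}$ for every ${m}$, without invoking (b2).
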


\begin{proof}
   Part (a) is trivial.

   (b1) We must show that
   $$
   \text{if }\quad
   \varepsilon\in[\nbhds({p},{X})]^{{<}\omega}\setminus\{\varnothing\},
   \quad\text{ then }\quad\bigcap_{{U}\in\hspace{1pt}\varepsilon}\rise_{\mathbf{F}}({p},{U})\neq\varnothing.
   $$
   For each ${U}\in\varepsilon,$ we have
   $\rise_{\mathbf{F}}({p},{U})\supseteq\rise_{\mathbf{F}}({p},\bigcap\varepsilon)$
   because ${U}\supseteq\bigcap\varepsilon\neq\varnothing.$
   Therefore
   $$
   \bigcap_{{U}\in\hspace{1pt}\varepsilon}\rise_{\mathbf{F}}({p},{U})\supseteq
   \rise_{\mathbf{F}}({p},{\textstyle\bigcap\varepsilon})
   $$
   and it follows from (a) that $\rise_{\mathbf{F}}({p},\bigcap\varepsilon)\neq\varnothing$ since $\bigcap\varepsilon\in\nbhds({p},{X}).$

   (b2) By (c) of Lemma~\ref{lem.F.vs.S.by.isomorph}, there exist an isomorphism $(\varphi,\psi)$ between $\mathbf{F}\hspace{-1pt}$ and $\mathbf{S}$ and a topology $\tau$ on ${}^{\omega}\omega$ such that $\psi$ is a homeomorphism from ${X}\hspace{-1pt}$ onto $({}^{\omega}\omega,\tau)$ and $\mathbf{S}$ is a $\pi\!$-tree on $({}^{\omega}\omega,\tau).$ Put ${q}\coloneq\psi({p}).$
   For each ${U}\subseteq{X},$ we have $\rise_{\mathbf{F}}({p},{U})=\rise_{\mathbf{S}}\big({q},\psi[{U}]\big)$ and
   $$
   {U}\in\nbhds({p},{X})
   \enskip\leftrightarrow\enskip
   \psi[{U}]\in\nbhds\big({q},({}^{\omega}\omega,\tau)\big).
   $$
   Then it is enough to show that
   $$
   \text{the set }\quad
   {M}_{{q}}\:\coloneq\ \bigcap\Big\{\rise_{\mathbf{S}}({q},{V}):
   {V}\in\nbhds\big({q},({}^{\omega}\omega,\tau)\big)\Big\}
   \quad\text{ is empty.}
   $$
   It follows from Lemma~\ref{lem.pi.and.B.f.trees.vs.S} that $\mathbf{S}_{{q}{\upharpoonright}\hspace{0.3pt}{n}}\in\nbhds\big({q},({}^{\omega}\omega,\tau)\big)$
   for all ${n}\in\omega,$
   so using Example~\ref{exmpl.rise.in.S} we have
   $$
   {M}_{{q}}\;\subseteq\;
   \bigcap\big\{\rise_{\mathbf{S}}({q},\mathbf{S}_{{q}{\upharpoonright}\hspace{0.3pt}{n}}):
   {n}\in\omega\big\}\;=\;
   {\textstyle\bigcap\{\omega\setminus{n}:{n}\in\omega\}\;=\;\varnothing}.
   $$

   (b3) It follows from (b1)--(b2) that the set $\rise_{\mathbf{F}}({p},{U})$ is infinite for all ${U}\in\nbhds({p},{X}),$ and $\rise_{\mathbf{F}}({p},{U})\subseteq\omega$ because $\height\mathbf{F}=\omega$ by (a) of Corollary~\ref{cor.scope.in.Baire.f.tree}.
\end{proof}

\section{The first theorem}

\begin{teo}\label{th.1}
   Suppose that $\mathbf{H}(\lambda)$ is a $\pi\!$-tree on a space ${X}_{\lambda}$ for every $\lambda\in\Lambda,$ where ${2}\leqslant{|}\Lambda{|}\leqslant\omega.$
   Suppose also that for each finite nonempty ${I}\subseteq\Lambda,$

   \begin{itemize}
   \item[\ding{226}]
      if ${R}_{{i}}\in\riseee_{\mathbf{H}({i})}({X}_{{i}})$ for all ${i}\in{I},$
   \item[\ding{226}]
      then $\bigcap_{{i}\in{I}}{R}_{{i}}$ is infinite.
   \end{itemize}
   Then the product $\prod_{\lambda\in\Lambda}{X}_{\lambda}$ has a $\pi\!$-tree.
\end{teo}

\begin{sle}\label{cor.Y*product}
    Suppose that $\mathbf{H}(\lambda)$ is a $\pi\!$-tree on a space ${X}_{\lambda}$
    and $\cofin\omega\gg\riseee_{\mathbf{H}(\lambda)}({X}_{\lambda})$ for all $\lambda\in\Lambda,$ where ${1}\leqslant{|}\Lambda{|}\leqslant\omega.$
    Suppose also that a space ${Y}$ has a $\pi\!$-tree. Then the product ${Y}\times\prod_{\lambda\in\Lambda}{X}_{\lambda}$ also has a $\pi\!$-tree.
\end{sle}

\begin{proof}[\textbf{\textup{Proof of Corollary~\ref{cor.Y*product}}}]
   Let $\mathbf{G}$ be a $\pi\!$-tree on ${Y}\hspace{-1pt}$ and ${I}\subseteq\Lambda$ be finite and nonempty.
   Now, if ${R}\in\riseee_{\mathbf{G}}({Y})$  and ${R}_{{i}}\in\riseee_{\mathbf{H}({i})}({X}_{{i}})$
   for every ${i}\in{I},$ then ${R}\in[\omega]^{\omega}$ by (b3) of Lemma~\ref{rem.rise.vs.grow.into} and it follows from (a) of Lemma~\ref{rem.rise.vs.grow.into} that $\bigcap_{{i}\in{I}}{R}_{{i}}\supseteq\omega\setminus{n}$ for some ${n}\in\omega.$ Therefore
   ${R}\cap\bigcap_{{i}\in{I}}{R}_{{i}}$ is infinite.
\end{proof}

\begin{proof}[\textbf{\textup{Proof of Theorem~\ref{th.1}}}]

   We may assume that    ${2}\leqslant\Lambda\in\omega\hspace{2pt}{\cup}\hspace{1pt}\{\omega\}.$
   By (c) of Lemma~\ref{lem.F.vs.S.by.isomorph}, for each ${n}\in\Lambda,$
   there exist an isomorphism $(\varphi_{{n}},\psi_{{n}})$ between $\mathbf{H}({n})$ and $\mathbf{S}$ and a topology $\tau_{{n}}$ on ${}^{\omega}\omega$ such that $\psi_{{n}}$ is a homeomorphism from ${X}_{{n}}$ onto $({}^{\omega}\omega,\tau_{{n}})$ and $\mathbf{S}$ is a $\pi\!$-tree on $({}^{\omega}\omega,\tau_{{n}}).$
   It follows that
   $$
   \riseee_{\mathbf{H}({n})}({X}_{{n}})=\riseee_{\mathbf{S}}\big(({}^{\omega}\omega,\tau_{{n}})\big)
   \qquad\text{for all}\enskip{n}\in\Lambda.
   $$
   Now, for every ${k}\in\Lambda,$ we have the following:
   \begin{equation}\label{*reform.1}
      \textstyle\text{if }\quad{R}_{{i}}\in\riseee_{\mathbf{S}}
      \big(({}^{\omega}\omega,\tau_{{i}})\big)
      \enskip\text{for every}\enskip{i}\in{k}+{1},
      \quad\text{ then }\quad\bigcap_{{i}\in{k}+{1}}{R}_{{i}}\enskip\text{is infinite.}
   \end{equation}
   And we must prove that the space $\prod_{{n}\in\Lambda}({}^{\omega}\omega,\tau_{{n}})$ has a $\pi\!$-tree.

   In this proof we use several specific notations.
   First, ${E}\cdot{F}\coloneq\{\hspace{1pt}{e}\,\hspace{0.5pt}{\cup}\,{f}:
   {e}\,\hspace{0.5pt}{\in}\hspace{0.5pt}\hspace{1pt}{E},
   {f}\hspace{1.1pt}{\in}\hspace{1.5pt}{F}\hspace{1.4pt}\}.$
   We use this operation in situations when
   ${E}\subseteq{}^{{A}}{C}$ and ${F}\subseteq{}^{{B}}{C}$
   with ${A}\cap{B}=\varnothing,$
   so that
   $$
   {E}\cdot{F}\ =
   \ \big\{\:{p}\in{}^{{A}\hspace{0.3pt}\cup{B}}{C}\::\:{p}{\upharpoonright}{A}\in{E}
   \enskip\textsf{and}\enskip{p}{\upharpoonright}{B}\in{F}\:\big\};
   $$
   in particular, when ${B}=\varnothing,$ we have ${E}\cdot{}^{\varnothing}{C}={E}$
   because ${}^{\varnothing}{C}=\{\varnothing\}.$
   Recall that 
   $$
   \textstyle\prod_{{i}\in{I}}{D}_{{i}}\ \coloneq
   \ \big\{\:\langle{p}_{{i}}\rangle_{{i}\in{I}}\in{}^{{I}}
   ({\textstyle\bigcup_{{i}\in{I}}{D}_{{i}}})\;:\;{p}_{{i}}\in{D}_{{i}}
   \,\text{ for all }\,{i}\in{I}\:\big\}.
   $$
   When ${v}\in{}^{{<}\omega}\omega$ and ${m}\in\omega,$
   we put
   \begin{equation}\label{*reform.2'}
      \widetilde{\mathbf{S}}^{{m}}_{{v}}\:\coloneq
      \;{\textstyle\bigcup}\{\mathbf{S}_{{v}
      \hspace{0.4pt}\hat{\:}\langle{l}\rangle}:{l}\in\omega\setminus{m}\}.
   \end{equation}
   Note that $\{\hspace{0.7pt}\widetilde{\mathbf{S}}^{{m}}_{{v}}:{m}\in\omega\}\gg\shoot_\mathbf{S}({v})$
   for all ${v}\in{}^{{<}\omega}\omega.$

   We build a $\pi\!$-tree on the space $\prod_{{n}\in\Lambda}({}^{\omega}\omega,\tau_{{n}})= \big({}^{\Lambda}({}^{\omega}\omega),\tau\big),$ where $\tau$ is the Tychonoff product topology, by using Lemma~\ref{lem.a(n,v,i)}. This lemma states that there
   exists an indexed family
   $$
   \big\langle\;{a}({n},{v},{i})\ :\
   {n}\,{\in}\,\omega,\ {v}\,{\in}\,{}^{{2}{n}}\omega,\ {i}\,{\in}\,
   \Lambda\hspace{1pt}{\cap}\hspace{1pt}({n}\hspace{0.7pt}{+}\hspace{0.7pt}{1})\;\big\rangle
   \vspace{-6pt}
   $$
   such that

   \begin{itemize}
    \item [\textup{(a1)}]
      $\mathsurround=0pt
      \forall{n}\hspace{1pt}{\in}\hspace{1pt}\omega
      \ \forall{v}\hspace{1pt}{\in}\hspace{1pt}{}^{{2}{n}}\omega
      \ \forall{i}\hspace{1pt}{\in}\hspace{1pt}\Lambda\hspace{1pt}
      {\cap}\hspace{1pt}({n}\hspace{0.7pt}{+}\hspace{0.7pt}{1})
      \ \big[{a}({n},{v},{i})\in{}^{{n}}\omega\big];$
   \item [\textup{(a2)}]
      $\mathsurround=0pt
      \forall{n}\hspace{1pt}{\in}\hspace{1pt}\omega
      \ \forall{v}\hspace{1pt}{\in}\hspace{1pt}{}^{{2}{n}}\omega
      \ \forall{m}\hspace{1pt}{\in}\hspace{1pt}\omega$

      $\displaystyle
      \Big(
      \big(\prod_{{i}\in\Lambda\cap({n}+{1})}\!\!\!\!\!\widetilde{\mathbf{S}}^{{m}}_{{a}({n},{v},{i})}\big)
      \setminus
      \big(\prod_{{i}\in\Lambda\cap({n}+{1})}\!\!\!\!\!\widetilde{\mathbf{S}}^{{m}+{1}}_{{a}({n},{v},{i})}\big)
      \Big)
      \cdot\,
      {}^{\Lambda\cap\{{n}+{1}\}}({}^{\omega}\omega)
      \ \equiv
      \ \,\bigsqcup_{{l}\in\omega}
      \ \prod_{{i}\in\Lambda\cap({n}+{2})}\!\!\!\!\!
      \mathbf{S}_{{a}({n}+{1},{v}\hspace{0.4pt}\hat{\:}\langle{m},{l}\rangle,{i})}.$
   \end{itemize}
   Let $\mathbf{G}(\Lambda)$ be a foliage tree with
   $\skeleton\mathbf{G}(\Lambda)\coloneq({}^{{<}\omega}\omega,{\subset})$
   and with leaves defined as follows:

   \begin{itemize}
    \item [\textup{(b1)}]
      $\mathsurround=0pt
      \forall{n}\,{\in}\,\omega
      \,\ \forall{v}\,{\in}\,{}^{{2}{n}}\omega$

      $\mathsurround=0pt \displaystyle
      \mathbf{G}(\Lambda)_{{v}}\;\coloneq
      \ \big(\!\!\!\!\prod_{\ {i}\in\Lambda\cap({n}+{1})}\!\!\!\!\!
      \mathbf{S}_{{a}({n},{v},{i})}\big)
      \cdot
      {}^{\Lambda\setminus({n}+{1})}({}^{\omega}\omega);$
   \item [\textup{(b2)}]
      $\mathsurround=0pt
      \forall{n}\,{\in}\,\omega
      \,\ \forall{v}\,{\in}\,{}^{{2}{n}}\omega
      \,\ \forall{m}\,{\in}\,\omega$

      $\mathsurround=0pt \displaystyle
      \mathbf{G}(\Lambda)_{{v}\hspace{0.4pt}\hat{\:}\langle{m}\rangle}\;\coloneq
      \ \Big(
      \big(\!\!\!\prod_{\ {i}\in\Lambda\cap({n}+{1})}\!\!\!\!\!
      \widetilde{\mathbf{S}}^{{m}}_{{a}({n},{v},{i})}\big)
      \setminus
      \big(\!\!\!\prod_{\ {i}\in\Lambda\cap({n}+{1})}\!\!\!\!\!
      \widetilde{\mathbf{S}}^{{m}+{1}}_{{a}({n},{v},{i})}\big)
      \Big)
      \cdot
      {}^{\Lambda\setminus({n}+{1})}({}^{\omega}\omega).$
   \end{itemize}
   Notice that the construction of $\mathbf{G}(\Lambda)$ doesn't depend on topologies $\tau_{{n}},$ ${n}\in\Lambda;$ it depends only on the cardinality of $\Lambda.$

   To complete the proof, we show that $\mathbf{G}(\Lambda)$ is indeed a $\pi\!$-tree on $\big({}^{\Lambda}({}^{\omega}\omega),\tau\big):$

   \begin{itemize}
   \item[\ding{42}]
      $\mathsurround=0pt
      \mathbf{G}(\Lambda)\,$ is a foliage $\omega,\omega\!$-tree.
   \item[\ding{42}]
      $\mathsurround=0pt
      \mathbf{G}(\Lambda)_{{0}_{\mathbf{G}(\Lambda)}}={}^{\Lambda}({}^{\omega}\omega).$
   \end{itemize}
   We have ${0}_{\mathbf{G}(\Lambda)}=\langle\rangle,$ clause (b1) with ${n}={0}$ says that $$
   \mathbf{G}(\Lambda)_{\langle\rangle}\ =
   \ {}^{\{0\}}\mathbf{S}_{{a}({0},\langle\rangle,{0})}
   \cdot
   {}^{\Lambda\setminus{1}}({}^{\omega}\omega),
   $$
   so using (\ref{*(c1)}) (see the proof of Lemma~\ref{lem.a(n,v,i)}) we have
   $$
   \mathbf{G}(\Lambda)_{{0}_{\mathbf{G}(\Lambda)}}\ =
   \ {}^{\{0\}}\mathbf{S}_{\langle\rangle}
   \cdot
   {}^{\Lambda\setminus{1}}({}^{\omega}\omega)\ =
   \ {}^{\{0\}}({}^{\omega}\omega)
   \cdot
   {}^{\Lambda\setminus\{{0}\}}({}^{\omega}\omega)\ =
   \ {}^{\Lambda}({}^{\omega}\omega).$$

   \begin{itemize}
   \item[\ding{42}]
      $\mathsurround=0pt
      \mathbf{G}(\Lambda)\,$ is open in $\big({}^{\Lambda}({}^{\omega}\omega),\tau\big).$
   \end{itemize}
   By (b) of Corollary~\ref{cor.scope.in.Baire.f.tree}, every set $\mathbf{S}_{{v}}$ is closed-and-open in each of spaces $({}^{\omega}\omega,\tau_{{n}}),$ and the formula
   $$
   \widetilde{\mathbf{S}}^{{m}}_{{v}}\ =\ \mathbf{S}_{{v}}\setminus
   {\textstyle\bigcup}\{\mathbf{S}_{{v}\hspace{0.4pt}\hat{\:}\langle{l}\rangle}:{l}\in{m}\}
   $$
   (which follows from (\ref{*reform.2'}))
   implies that  every set $\widetilde{\mathbf{S}}^{{m}}_{{v}}$ is closed-and-open in each of $({}^{\omega}\omega,\tau_{{n}})$ too.
   Therefore every leaf of $\mathbf{G}(\Lambda)$ is open in $\big({}^{\Lambda}({}^{\omega}\omega),\tau\big).$

   \begin{itemize}
   \item[\ding{42}]
      $\mathsurround=0pt
      \mathbf{G}(\Lambda)\,$ is locally strict.
   \end{itemize}
   Let ${t}\in\nodes\mathbf{G}(\Lambda).$
   First, suppose that ${t}\in{}^{{2}{n}}\omega$ for some ${n}\in\omega.$
   Since $\mathbf{S}_{{u}}=\widetilde{\mathbf{S}}^{{0}}_{{u}}$ for all ${u}\in{}^{{<}\omega}\omega,$ then by (b1) we have
   $$
   \mathbf{G}(\Lambda)_{{t}}\ =
   \ \big(\prod_{{i}\in\Lambda\cap({n}+{1})}\widetilde{\mathbf{S}}^{{0}}_{{a}({n},{t},{i})}\big)
   \cdot
   {}^{\Lambda\setminus({n}+{1})}({}^{\omega}\omega).
   $$
   Note that for each ${u}\in{}^{{<}\omega}\omega,$ the $\langle\hspace{0.5pt}\widetilde{\mathbf{S}}^{{m}}_{{u}}\rangle_{{m}\in\omega}$ is a strictly decreasing sequence of sets and
   $\bigcap_{{m}\in\omega}\widetilde{\mathbf{S}}^{{m}}_{{u}}=\varnothing.$
   Then it follows from (b2) that
   $$
   \mathbf{G}(\Lambda)_{{t}}\ \equiv
   \ \bigsqcup_{{m}\in\omega}\mathbf{G}(\Lambda)_{{t}\hspace{0.5pt}\hat{\:}\langle{m}\rangle}.
   $$

   Now suppose that ${t}\in{}^{{2}{n}+{1}}\omega$ for some ${n}\in\omega,$ so that ${t}={u}\hspace{0.5pt}\hat{\:}\langle{m}\rangle$ for some ${u}\in{}^{{2}{n}}\omega,$ ${m}\in\omega.$
   Then by (b2) we have
   $$
   \mathbf{G}(\Lambda)_{{t}}\ =
   \ \mathbf{G}(\Lambda)_{{u}\hspace{0.4pt}\hat{\:}\langle{m}\rangle}\ =
   \ \Big(
   \big(\!\!\!\prod_{\ {i}\in\Lambda\cap({n}+{1})}\!\!\!\!
   \widetilde{\mathbf{S}}^{{m}}_{{a}({n},{u},{i})}\big)
   \setminus
   \big(\!\!\!\prod_{\ {i}\in\Lambda\cap({n}+{1})}\!\!\!\!
   \widetilde{\mathbf{S}}^{{m}+{1}}_{{a}({n},{u},{i})}\big)
   \Big)
   \cdot
   {}^{\Lambda\cap\{{n}+{1}\}}({}^{\omega}\omega)
   \cdot
   {}^{\Lambda\setminus({n}+{2})}({}^{\omega}\omega),
   $$
   so (a2) implies
   $$
   \mathbf{G}(\Lambda)_{{t}}
   \ \equiv
   \ \bigsqcup_{{l}\in\omega}
   \Big(
   \big(\!\!\!\prod_{\ {i}\in\Lambda\cap({n}+{2})}\!\!\!\!
   \mathbf{S}_{{a}({n}+{1},{u}\hspace{0.4pt}\hat{\:}\langle{m},{l}\rangle,{i})}\big)
   \cdot
   {}^{\Lambda\setminus({n}+{2})}({}^{\omega}\omega)
   \Big),
   $$
   and then using (b1) with
   ${v}={u}\hspace{0.5pt}\hat{\:}\langle{m},{l}\rangle\in{}^{{2}({n}+{1})}\omega$
   we have
   $$
   \mathbf{G}(\Lambda)_{{t}}\ \equiv
   \ \bigsqcup_{{l}\in\omega}
   \mathbf{G}(\Lambda)_{{u}\hspace{0.4pt}\hat{\:}\langle{m},{l}\rangle},
   \quad\text{ that is, }\quad
   \mathbf{G}(\Lambda)_{{t}}\ \equiv
   \ \bigsqcup_{{l}\in\omega}
   \mathbf{G}(\Lambda)_{{t}\hspace{0.5pt}\hat{\:}\langle{l}\rangle}.
   $$

   \begin{itemize}
   \item[\ding{42}]
      $\mathsurround=0pt
      \mathbf{G}(\Lambda)\,$ has strict branches.
   \end{itemize}
   Suppose that ${B}$ is a branch in $\skeleton\mathbf{G}(\Lambda),$ which means that ${B}=\{{z}{\upharpoonright}\hspace{0.4pt}{n}:{n}\in\omega\}$ for some ${z}\in{}^{\omega}\omega.$
   Since $\mathbf{G}(\Lambda)$ is a locally strict foliage $\omega,\omega\!$-tree,
   then $\mathbf{G}(\Lambda)$ is nonincreasing, so
   \begin{equation}\label{*55.3}
      \bigcap_{{b}\in{B}}\mathbf{G}(\Lambda)_{{b}}\ =
      \ \bigcap_{{n}\in\omega}\mathbf{G}(\Lambda)_{{z}{\upharpoonright}\hspace{0.2pt}{2}{n}}
   \end{equation}
   because the chain $\{{z}{\upharpoonright}\hspace{0.5pt}{2}{n}:{n}\in\omega\}$ is cofinal in $({B},{\subset}).$
   By (b1) we have
   \begin{equation}\label{*55.2}
      \mathbf{G}(\Lambda)_{{z}{\upharpoonright}\hspace{0.3pt}{2}{n}}\ =
      \ \big(\!\!\!\prod_{\ {i}\in\Lambda\cap({n}+{1})}\!\!\!\!
      \mathbf{S}_{{a}({n},{{z}{\upharpoonright}\hspace{0.3pt}{2}{n}},{i})}\big)
      \cdot
      {}^{\Lambda\setminus({n}+{1})}({}^{\omega}\omega)
      \qquad\text{for all}\enskip{n}\in\omega.
   \end{equation}
   Since $\mathbf{G}(\Lambda)$ is nonincreasing, it follows from (\ref{*55.2}) and (a1) that
   $$
   \mathbf{S}_{{a}({n},{{z}{\upharpoonright}\hspace{0.3pt}{2}{n}},{i})}\supset
   \mathbf{S}_{{a}({n}+{1},{{z}{\upharpoonright}\hspace{0.3pt}{2}({n}+{1})},{i})}
   \qquad\text{for all }\;{n}\in\omega\;\text{ and }\;{i}\in\Lambda\cap({n}+{1})
   $$
   --- that is, for all ${i}\in\Lambda$ and ${n}\in\omega\setminus{i}.$
   This implies
   $$
   {a}({n},{{z}{\upharpoonright}\hspace{0.4pt}{2}{n}},{i})\subset
   {a}\big({n}{+}{1},{{z}{\upharpoonright}\hspace{0.4pt}{2}({n}{+}{1})},{i}\big)
   \qquad\text{for all }\;{i}\in\Lambda\;\text{ and }\;{n}\in\omega\setminus{i},
   $$
   and then, for every ${i}\in\Lambda,$ there is ${y}_{{i}}\in{}^{\omega}\omega$ such that ${a}({n},{{z}{\upharpoonright}\hspace{0.4pt}{2}{n}},{i})\subset{y}_{{i}}$ for all ${n}\in\omega\setminus{i}.$
   Then
   \begin{equation}\label{*63.1}
      \bigcap_{{n}\in\omega\setminus{i}}\mathbf{S}_{{a}({n},{{z}{\upharpoonright}\hspace{0.3pt}{2}{n}},{i})}=
      \{{y}_{{i}}\}\qquad\text{for all }\;{i}\in\Lambda.
   \end{equation}
   Put ${y}\coloneq\langle{y}_{{i}}\rangle_{{i}\in\Lambda}\in{}^{\Lambda}({}^{\omega}\omega).$
   Now (\ref{*55.2}) and (\ref{*63.1}) imply
   $\bigcap_{{n}\in\omega}\mathbf{G}(\Lambda)_{{z}{\upharpoonright}\hspace{0.3pt}{2}{n}}=\{{y}\},$
   so the $\bigcap_{{b}\in{B}}\mathbf{G}(\Lambda)_{{b}}$ is a singleton by (\ref{*55.3}).

   \begin{itemize}
   \item[\ding{42}]
      $\mathsurround=0pt
      \mathbf{G}(\Lambda)\,$ grows into $\big({}^{\Lambda}({}^{\omega}\omega),\tau\big).$
   \end{itemize}
   Suppose that
   $$
   {p}=\langle{p}_{{i}}\rangle_{{i}\in\Lambda}\in{}^{\Lambda}({}^{\omega}\omega)
   \qquad\text{and}\qquad {U}\in\nbhds\Big({p},\big({}^{\Lambda}({}^{\omega}\omega),\tau\big)\Big).
   $$
   We may assume that
   \begin{equation}\label{*56.-1}
      {U}\ =\ \big(\prod_{{i}\in{k}+{1}}{U}_{{i}}\big)\cdot      {}^{\Lambda\setminus{({k}+{1})}}({}^{\omega}\omega)
   \end{equation}
   for some ${k}\in\Lambda$ and some ${U}_{{i}}\in\nbhds\big({p}_{{i}},({}^{\omega}\omega,\tau_{{i}})\big)$
   for every ${i}\in{k}+{1}.$ Put ${R}_{{i}}\coloneq\rise_{\mathbf{S}}({p}_{{i}},{U}_{{i}})$ for every ${i}\in{k}+{1}.$
   Then $\bigcap_{{i}\in{k}+{1}}{R}_{{i}}$ is infinite by (\ref{*reform.1}),
   so there is some ${\bar{n}}\in\bigcap_{{i}\in{k}+{1}}{R}_{{i}}$ such that ${\bar{n}}\geqslant{k}.$
   By definition of $\rise_{\mathbf{S}}({p}_{{i}},{U}_{{i}}),$ for each ${i}\in{k}+{1},$ there is ${v}_{{i}}\in\scope_{\mathbf{S}}({p}_{{i}})$
   such that
   $$
   \height_{\mathbf{S}}({v}_{{i}})={\bar{n}}
   \qquad\text{and}\qquad
   \shoot_{\mathbf{S}}({v}_{{i}})\gg\{{U}_{{i}}\}.
   $$
   This means that for each ${i}\in{k}+{1},$ we have ${p}_{{i}}\in\mathbf{S}_{{v}_{{i}}},$ ${v}_{{i}}\in{}^{{\bar{n}}}\omega$ (hence ${v}_{{i}}={p}_{{i}}{\upharpoonright}\hspace{0.6pt}{\bar{n}}$), and
   $\widetilde{\mathbf{S}}^{{m}_{{i}}}_{{v}_{{i}}}=
   \widetilde{\mathbf{S}}^{{m}_{{i}}}_{{p}_{{i}}{\upharpoonright}\hspace{0.5pt}{\bar{n}}}\subseteq{U}_{{i}}$ for some ${m}_{{i}}\in\omega.$
   Let ${\bar{m}}$ be the maximal element of $\{{m}_{{i}}:{i}\in{k}+{1}\}.$ Then
   $\widetilde{\mathbf{S}}^{{\bar{m}}}_{{p}_{{i}}{\upharpoonright}\hspace{0.5pt}{\bar{n}}}
   \subseteq{U}_{{i}}$
   for all ${i}\in{k}+{1},$ and hence
   $$
   \prod_{{i}\in{k}+{1}}\widetilde{\mathbf{S}}^{{\bar{m}}}_{{p}_{{i}}{\upharpoonright}\hspace{0.5pt}{\bar{n}}}
   \ \subseteq\ \prod_{{i}\in{k}+{1}}{U}_{{i}}.
   $$
   Note that ${k}+{1}=\Lambda\cap({k}+{1})$ because ${k}\in\Lambda,$ so using (\ref{*56.-1}) we get
   \begin{equation}\label{*56.1}
      \big(\!\!\!\prod_{\ {i}\in\Lambda\cap{({k}+{1})}}\!\!\!\!
      \widetilde{\mathbf{S}}^{{\bar{m}}}_{{p}_{{i}}{\upharpoonright}\hspace{0.5pt}{\bar{n}}}\big)
      \cdot
      {}^{\Lambda\setminus{({k}+{1})}}({}^{\omega}\omega)
      \ \subseteq\ {U}.
   \end{equation}
   We already know that $\mathbf{G}(\Lambda)$ is a Baire foliage tree on $\big({}^{\Lambda}({}^{\omega}\omega),\tau\big),$ so using (d) of Corollary~\ref{cor.scope.in.Baire.f.tree} we can take node ${\bar{v}}\in\scope_{\mathbf{G}(\Lambda)}({p})$
   such that ${\bar{v}}\in{}^{{2}{\bar{n}}}\omega.$
   Then, using (b1) with ${n}={\bar{n}}$ and ${v}={\bar{v}},$ we have
   $$
   {p}\ =\ \langle{p}_{{i}}\rangle_{{i}\in\Lambda}\ \in\ \mathbf{G}(\Lambda)_{{\bar{v}}}\ =
   \ \big(\!\!\!\prod_{\ {i}\in\Lambda\cap({\bar{n}}+{1})}\!\!\!\!
   \mathbf{S}_{{a}({\bar{n}},{\bar{v}},{i})}\big)
   \cdot{}^{\Lambda\setminus({\bar{n}}+{1})}({}^{\omega}\omega),
   $$
   so ${p}_{{i}}\in\mathbf{S}_{{a}({\bar{n}},{\bar{v}},{i})}$ for all ${i}\in\Lambda\cap({\bar{n}}+{1}),$
   and hence using (a1) we get ${a}({\bar{n}},{\bar{v}},{i})={p}_{{i}}{\upharpoonright}\hspace{0.6pt}{\bar{n}}$
   for all ${i}\in\Lambda\cap({\bar{n}}+{1}).$
   Using (b2) and inequality ${\bar{n}}\geqslant{k}$ we can write
   $$
   \mathbf{G}(\Lambda)_{{\bar{v}}\hspace{0.7pt}\hat{\:}\langle{m}\rangle}
   \ \subseteq
   \ \big(\!\!\!\prod_{\ {i}\in\Lambda\cap({\bar{n}}+{1})}\!\!\!\!
   \widetilde{\mathbf{S}}^{{m}}_{{p}_{{i}}{\upharpoonright}\hspace{0.5pt}{\bar{n}}}\big)
   \cdot
   {}^{\Lambda\setminus({\bar{n}}+{1})}({}^{\omega}\omega)
   \ \subseteq
   \ \big(\!\!\!\prod_{\ {i}\in\Lambda\cap{({k}+{1})}}\!\!\!\!
   \widetilde{\mathbf{S}}^{{m}}_{{p}_{{i}}{\upharpoonright}\hspace{0.5pt}{\bar{n}}}\big)
   \cdot
   {}^{\Lambda\setminus{({k}+{1})}}({}^{\omega}\omega)
   \qquad\text{for all}\enskip{m}\in\omega.
   $$
   Therefore using (\ref{*56.1}) we get $\mathbf{G}(\Lambda)_{{\bar{v}}\hspace{0.7pt}\hat{\:}\langle{m}\rangle}
   \subseteq{U}$ for all ${m}\in\omega\setminus{\bar{m}}.$ This means that we have found ${\bar{v}}\in\scope_{\mathbf{G}(\Lambda)}({p})$ such that
   $\shoot_{\mathbf{G}(\Lambda)}({\bar{v}})\gg\{{U}\}.$
\end{proof}

\begin{lem}\label{lem.a(n,v,i)}
   For each
   $\Lambda\in\big(\omega\hspace{2pt}{\cup}\hspace{1pt}\{\omega\}\big)\setminus{2},$
   there is a family
   $\big\langle\:{a}({n},{v},{i})\;:
   \;{n}\,{\in}\,\omega,\ {v}\,{\in}\,{}^{{2}{n}}\omega,\ {i}\,{\in}\,
   \Lambda\hspace{1pt}{\cap}\hspace{1pt}({n}\hspace{0.7pt}{+}\hspace{0.7pt}{1})\:\big\rangle$
   such that
   \begin{itemize}
    \item [\textup{(a1)}]
      $\mathsurround=0pt
      \forall{n}\hspace{1pt}{\in}\hspace{1pt}\omega
      \ \forall{v}\hspace{1pt}{\in}\hspace{1pt}{}^{{2}{n}}\omega
      \ \forall{i}\hspace{1pt}{\in}\hspace{1pt}\Lambda\hspace{1pt}
      {\cap}\hspace{1pt}({n}\hspace{0.7pt}{+}\hspace{0.7pt}{1})
      \ \big[{a}({n},{v},{i})\in{}^{{n}}\omega\big];$
   \item [\textup{(a2)}]
      $\mathsurround=0pt
      \forall{n}\hspace{1pt}{\in}\hspace{1pt}\omega
      \ \forall{v}\hspace{1pt}{\in}\hspace{1pt}{}^{{2}{n}}\omega
      \ \forall{m}\hspace{1pt}{\in}\hspace{1pt}\omega$

      $\displaystyle
      \Big(
      \big(\prod_{{i}\in\Lambda\cap({n}+{1})}\!\!\!\!\!\widetilde{\mathbf{S}}^{{m}}_{{a}({n},{v},{i})}\big)
      \setminus
      \big(\prod_{{i}\in\Lambda\cap({n}+{1})}\!\!\!\!\!\widetilde{\mathbf{S}}^{{m}+{1}}_{{a}({n},{v},{i})}\big)
      \Big)
      \cdot\,
      {}^{\Lambda\cap\{{n}+{1}\}}({}^{\omega}\omega)
      \ \equiv
      \ \,\bigsqcup_{{l}\in\omega}
      \ \prod_{{i}\in\Lambda\cap({n}+{2})}\!\!\!\!\!
      \mathbf{S}_{{a}({n}+{1},{v}\hspace{0.4pt}\hat{\:}\langle{m},{l}\rangle,{i})}.$
   \end{itemize}
\end{lem}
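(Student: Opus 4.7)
The plan is to construct the family $\langle a(n,v,i)\rangle$ by induction on $n$. For the base case $n=0$, the only admissible triple is $(0,\langle\rangle,0)$ (using that $0\in\Lambda$ since $|\Lambda|\geqslant 2$), and I set $a(0,\langle\rangle,0):=\langle\rangle\in{}^{0}\omega$. This is also a convenient place to record the trivial identity $\mathbf{S}_{\langle\rangle}={}^{\omega}\omega$, which is what the theorem proof cites as (*c1).

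For the inductive step, assume $a(n,v,i)$ has been defined for all $v\in{}^{2n}\omega$ and $i\in I_{n}:=\Lambda\cap(n+1)$. Fix $v\in{}^{2n}\omega$ and $m\in\omega$; I need to define $a(n+1,v\hat{\:}\langle m,l\rangle,i)$ for all $l\in\omega$ and $i\in I_{n+1}:=\Lambda\cap(n+2)$ so that (a2) holds. The workhorse is the disjoint-union identity $\widetilde{\mathbf{S}}^{k}_{u}\equiv\bigsqcup_{j\geqslant k}\mathbf{S}_{u\hat{\:}\langle j\rangle}$, which follows immediately from (\ref{*reform.2'}). Setting
$$
K\;:=\;\bigl\{\vec{k}\in\omega^{I_{n}}:k_{i}\geqslant m\ \text{for all}\ i\in I_{n},\ \text{and}\ k_{j}=m\ \text{for some}\ j\in I_{n}\bigr\},
$$
a routine computation gives
$$
\prod_{i\in I_{n}}\widetilde{\mathbf{S}}^{m}_{a(n,v,i)}\;\setminus\;\prod_{i\in I_{n}}\widetilde{\mathbf{S}}^{m+1}_{a(n,v,i)}\;\equiv\;\bigsqcup_{\vec{k}\in K}\prod_{i\in I_{n}}\mathbf{S}_{a(n,v,i)\hat{\:}\langle k_{i}\rangle}.
$$
Multiplying by ${}^{\Lambda\cap\{n+1\}}({}^{\omega}\omega)$, the extra factor is either ${}^{\omega}\omega\equiv\bigsqcup_{s\in{}^{n+1}\omega}\mathbf{S}_{s}$ (if $n+1\in\Lambda$) or the singleton $\{\langle\rangle\}$ (if $n+1\notin\Lambda$); so the full left-hand side of (a2) decomposes as a countable disjoint union of products of $\mathbf{S}$-cylinders, indexed by the set $L_{v,m}$ that equals $K\times{}^{n+1}\omega$ in the first case and $K$ in the second. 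Fix any bijection $\beta_{v,m}\colon\omega\to L_{v,m}$ and, for $l\in\omega$ with $\beta_{v,m}(l)=(\vec{k},s)$ (respectively $\vec{k}$), declare $a(n+1,v\hat{\:}\langle m,l\rangle,i):=a(n,v,i)\hat{\:}\langle k_{i}\rangle$ for $i\in I_{n}$, and $a(n+1,v\hat{\:}\langle m,l\rangle,n+1):=s$ when $n+1\in\Lambda$.

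Property (a1) is then immediate since every defined value lies in ${}^{n+1}\omega$. Property (a2) is obtained by collecting the terms of the disjoint-union decomposition along the bijection $\beta_{v,m}$; pairwise disjointness of the summands is immediate from the facts that $\mathbf{S}_{u\hat{\:}\langle k\rangle}\cap\mathbf{S}_{u\hat{\:}\langle k'\rangle}=\varnothing$ for $k\neq k'$ and that distinct length-$(n+1)$ sequences have disjoint cylinders. The only real bookkeeping obstacle is uniting the two cases $n+1\in\Lambda$ and $n+1\notin\Lambda$ and checking that $L_{v,m}$ is countably infinite in each: the second case forces $\Lambda\subseteq n+1$, so $|I_{n}|=|\Lambda|\geqslant 2$, and this is precisely what makes $K$ infinite. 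Thus the standing hypothesis $|\Lambda|\geqslant 2$ enters the proof exactly at this point.
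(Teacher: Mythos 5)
Your construction is correct and follows essentially the same route as the paper: the same recursion on $n$, the same key identity decomposing $\prod_i\widetilde{\mathbf{S}}^{m}_{a(n,v,i)}\setminus\prod_i\widetilde{\mathbf{S}}^{m+1}_{a(n,v,i)}$ into a disjoint union of cylinder products indexed by tuples $\vec{k}$ with all entries $\geqslant m$ and at least one equal to $m$, and the same observation that this index set (augmented by ${}^{n+1}\omega$ when $n+1\in\Lambda$) is countably infinite, which is where $|\Lambda|\geqslant 2$ enters. You merely execute uniformly the step the paper carries out explicitly for $n=1$ and declares ``similar'' for $n\geqslant 2$.
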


\begin{proof}
   We construct this indexed family by recursion on ${n}\in\omega$ as follows:

   When ${n}={0},$
   we have ${}^{{2}{n}}\omega={}^{{n}}\omega={}^{{0}}\omega=\big\{\langle\rangle\big\}$
   and $\Lambda\cap({n}+{1})=\{{0}\}$
   because $\Lambda\geqslant{2},$ so (a1) with ${n}={0}$ just says
   \begin{equation}\label{*(c1)}
      {a}\big({0},\langle\rangle,{0}\big)=\langle\rangle.
   \end{equation}

   When ${n}={1},$ we must choose ${a}({1},{v},{i})\in{}^{{1}}\omega$
   (for all ${v}\in{}^{{2}}\omega$ and ${i}\in\Lambda\cap{2}$) in such a way that (a2) with ${n}={0}$ is satisfied.  Since $\Lambda\geqslant{2},$ then $\Lambda\cap{1}=\{{0}\}$ and $\Lambda\cap{2}=\{{0},{1}\},$
   so (a2) with ${n}={0}$ says that
   $$
   \big({}^{\{0\}}\widetilde{\mathbf{S}}^{{m}}_{{a}({0},\langle\rangle,{0})}
   \setminus
   {}^{\{0\}}\widetilde{\mathbf{S}}^{{m}+{1}}_{{a}({0},\langle\rangle,{0})}\big)
   \cdot
   {}^{\{{1}\}}({}^{\omega}\omega)
   \ \equiv
   \ \,\bigsqcup_{{l}\in\omega}
   \ \prod_{{i}\in\{{0},{1}\}}\!\!
   \mathbf{S}_{{a}({1},\langle\rangle\hat{\:}\langle{m},{l}\rangle,{i})}
   \qquad\text{for all}\enskip{m}\in\omega.
   $$
   Using (\ref{*reform.2'}) and (\ref{*(c1)}), this can be simplified to
   $$
   {}^{\{0\}}\mathbf{S}_{\langle{m}\rangle}
   \cdot
   {}^{\{{1}\}}({}^{\omega}\omega)
   \ \equiv
   \ \,\bigsqcup_{{l}\in\omega}
   \ \prod_{{i}\in\{{0},{1}\}}\!\!
   \mathbf{S}_{{a}({1},\langle{m},{l}\rangle,{i})}
   \qquad\text{for all}\enskip\forall{m}\in\omega.
   $$
   Then we can take
   ${a}\big({1},\langle{m},{l}\rangle,{0}\big)\coloneq\langle{m}\rangle$ and
   ${a}\big({1},\langle{m},{l}\rangle,{1}\big)\coloneq\langle{l}\rangle$
   for every ${m},{l}\in\omega.$

   When ${n}\geqslant{2},$ the choice of ${a}({n},{v},{i})$ can be carried out similar to the case ${n}={1}$ if we note that
   $$
   {}^{\omega}\omega\:\equiv\bigsqcup_{{a}\in{}^{{h}}\omega}\mathbf{S}_{{a}}
   \qquad\text{for all }\;
   {h}\in\omega,
   $$
   and that for every ${k}\geqslant{2},$
   every ${a}=\langle{a}_{{i}}\rangle_{{i}\in{k}}\in{}^{{k}}({}^{{2}{n}}\omega),$
   and every ${m}\in\omega,$
   $$
   \big(\prod_{{i}\in{k}}\widetilde{\mathbf{S}}^{{m}}_{{a}_{{i}}}\big)
   \setminus
   \big(\prod_{{i}\in{k}}\widetilde{\mathbf{S}}^{{m}+{1}}_{{a}_{{i}}}\big)
   \ =
   \ \big(\prod_{{i}\in{k}}\:\bigcup_{{l}\in\omega\setminus{m}}
   \!\!\mathbf{S}_{{a}_{{i}}\hat{\:}\langle{l}\rangle}\big)
   \setminus
   \big(\prod_{{i}\in{k}}\!\bigcup_{\ {l}\in\omega\setminus({m}+{1})\!\!\!\!}
   \!\!\!\!\!\!\!\mathbf{S}_{{a}_{{i}}\hat{\:}\langle{l}\rangle}\big)
   \ =
   $$
   $$
   =
   \ \bigcup
   \Big\{
   \prod_{{i}\in{k}}\mathbf{S}_{{a}_{{i}}\hat{\:}\langle{l}_{{i}}\rangle}
   :
   {l}=\langle{l}_{{i}}\rangle_{{i}\in{k}}\in{}^{{k}}(\omega\setminus{m})
   \Big\}
   \setminus
   \bigcup
   \Big\{
   \prod_{{i}\in{k}}\mathbf{S}_{{a}_{{i}}\hat{\:}\langle{l}_{{i}}\rangle}
   :
   {l}=\langle{l}_{{i}}\rangle_{{i}\in{k}}\in{}^{{k}}\big(\omega\setminus({m}+{1})\big)
   \Big\}
   \ \equiv
   $$
   $$
   \equiv
   \ \bigsqcup
   \Big\{
   \prod_{{i}\in{k}}\mathbf{S}_{{a}_{{i}}\hat{\:}\langle{l}_{{i}}\rangle}
   :
   {l}=\langle{l}_{{i}}\rangle_{{i}\in{k}}\in{}^{{k}}(\omega{\setminus}{m})\setminus
   {}^{{k}}\big(\omega{\setminus}({m}{+}{1})\big)
   \Big\}
   $$
   and the set ${}^{{k}}(\omega{\setminus}{m})\setminus
   {}^{{k}}\big(\omega{\setminus}({m}{+}{1})\big)$ is infinite.
\end{proof}

\section{The second theorem}

\begin{teo}\label{th.2}\mbox{ }

   \textup{(a)} Suppose that $\mathbf{F}(\alpha)$ is a $\pi\!$-tree on a space ${X}_{\alpha}$ for every $\alpha\in{A},$ where ${1}\leqslant{|}{A}{|}\leqslant\omega.$ Suppose also that for each $\alpha\in{A},$ there is $\gamma_{\alpha}\subseteq\pset(\omega)$ such that

      \begin{itemize}
      \item [\ding{226}]
         $\mathsurround=0pt {|}\gamma_{\alpha}{|}\leqslant\omega,$
      \item [\ding{226}]
         $\mathsurround=0pt \gamma_{\alpha}\,$ has the \textup{\textsf{FIP}}, and
      \item [\ding{226}]
         $\mathsurround=0pt \gamma_{\alpha}\gg\riseee_{\mathbf{F}(\alpha)}({X}_{\alpha}).$
      \end{itemize}
   Then the product $\prod_{\alpha\in{A}}{X}_{\alpha}$ has a $\pi\!$-tree.

   \textup{(b)} Suppose, in addition to \textup{(a)}, that  $\mathbf{G}$ is a $\pi\!$-tree on a space ${Y}\hspace{-1pt}$ and $\riseee_{\mathbf{G}}({Y})$ has the \textup{\textsf{FIP}}. Then the product ${Y}\times\prod_{\alpha\in{A}}{X}_{\alpha}$ also has a $\pi\!$-tree.
\end{teo}

\begin{lem}\label{lem.sdvig.filtrov}
   Suppose that ${2}\leqslant\Lambda\in\omega\hspace{2pt}{\cup}\hspace{1pt}\{\omega\}$
   and for each ${n}\in\Lambda,$ $\varnothing\neq\delta_{{n}}\subseteq\pset(\omega)\,{\setminus}\,\{\varnothing\}$ and $\bigcap\delta_{{n}}=\varnothing.$
   Suppose also that $\delta_{{0}}$ has the \textup{\textsf{FIP}} and
   for each ${n}\in\Lambda\setminus\{{0}\},$ there is $\gamma_{{n}}\subseteq\pset(\omega)$ such that
   \begin{itemize}
   \item [\ding{226}]
      $\mathsurround=0pt {|}\gamma_{{n}}{|}\leqslant\omega,$
   \item [\ding{226}]
      $\mathsurround=0pt \gamma_{{n}}\,$ has the \textup{\textsf{FIP}}, and
   \item [\ding{226}]
      $\mathsurround=0pt \gamma_{{n}}\gg\delta_{{n}}.$
   \end{itemize}
   Then there exists a sequence $\langle\alpha_{{n}}\rangle_{{n}\in\Lambda}$ of strictly increasing functions $\alpha_{{n}}\colon\omega\to\omega$ such that

   \begin{itemize}
   \item [\textup{(\ding{168})}]
      if ${k}\in\Lambda$ and ${A}_{{i}}\in
      \big\{\alpha_{{i}}[{D}]:{D}\in\delta_{{i}}\big\}$ for all ${i}\leqslant{k},$
      then $\bigcap_{{i}\leqslant{k}}{A}_{{i}}$ is infinite.
   \end{itemize}
\end{lem}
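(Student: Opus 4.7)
The plan is built around the following picture. Set $R:=\alpha_0[\omega]\subseteq\omega$ and write $R=\{r_0<r_1<\cdots\}$; I will arrange matters so that every $\alpha_n[G]$ ($n\ge 1$, $G\in\gamma_n$) is cofinite in $R$. Once that is true, any finite intersection $\bigcap_{i=1}^{k}\alpha_i[G_i]$ is still cofinite in $R$, and intersecting with $\alpha_0[D_0]=\{r_j:j\in D_0\}$ leaves an infinite subset because $D_0$ is infinite. So the two ingredients I need are a sufficiently sparse enumeration $r_0<r_1<\cdots$ of $R$, and, for each $n\in\Lambda\setminus\{0\}$, a pseudo-intersection $T_n=\{t_n^n<t_n^{n+1}<\cdots\}$ of $\gamma_n$ that $\alpha_n$ will carry bijectively onto $R$.

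First I record preliminary observations. The \textsf{FIP} for $\delta_0$ together with $\bigcap\delta_0=\varnothing$ and $\varnothing\notin\delta_0$ forces every element of $\delta_0$ (indeed every finite intersection) to be infinite: a finite $D\in\delta_0$ could be pruned point by point using other members of $\delta_0$, producing a finite subfamily with empty intersection, contradicting the \textsf{FIP}. The same argument applied to $\gamma_n$, once one notes $\bigcap\gamma_n=\varnothing$ (which follows from $\gamma_n\gg\delta_n$ and $\bigcap\delta_n=\varnothing$), shows that elements of $\gamma_n$ are infinite. Replacing $\gamma_n$ by the family of finite intersections of any enumeration of it, I may assume $\gamma_n=\{G_n^0\supseteq G_n^1\supseteq\cdots\}$ is a $\subseteq$-decreasing chain of infinite sets with $\bigcap_j G_n^j=\varnothing$. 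Finally, by the $\pi$-refinement, every $D\in\delta_i$ ($i\ge 1$) contains some $G_i^{j_D}\in\gamma_i$ and $\alpha_i[G_i^{j_D}]\subseteq\alpha_i[D]$, so it suffices to verify the infinite-intersection conclusion after replacing each $D_i$ ($i\ge 1$) by a member of $\gamma_i$.

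The construction proceeds by recursion on the stage $s\in\omega$, introducing $t_n^s$ only once $s\ge n$. At stage $s$ I pick $t_n^s\in G_n^s$ with $t_n^{s-1}<t_n^s$ for every $n$ with $1\le n\le s$ (always possible, because $G_n^s$ is infinite), and then choose $r_s>r_{s-1}$ large enough that $r_s\ge t_s^s$ (when $s\in\Lambda\setminus\{0\}$) and $r_s-r_{s-1}\ge t_n^s-t_n^{s-1}$ for every $n$ with $1\le n<s$; only finitely many such gap inequalities are active at stage $s$, so this is routine. I then set $\alpha_0(s):=r_s$, and for $n\ge 1$ I define $\alpha_n\colon\omega\to\omega$ as the unique strictly increasing function satisfying $\alpha_n(t_n^s)=r_s$ for all $s\ge n$, inserting the intermediate values monotonically in the room that the gap condition leaves between consecutive anchors $r_{s-1}$ and $r_s$.

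Verification is short: since $G_n^s\subseteq G_n^j$ whenever $s\ge j$, we have $T_n\setminus G_n^j\subseteq\{t_n^s:n\le s<j\}$, a finite set, so $\alpha_n[G_n^j]\supseteq\alpha_n[T_n\cap G_n^j]\supseteq\{r_s:s\ge\max(n,j)\}$ is cofinite in $R$, and the opening picture then yields the conclusion (\ding{168}) for every $k\in\Lambda$. The main obstacle is the simultaneous bookkeeping: making the pseudo-intersections $T_n$ compatible with a single gap-controlled sequence $(r_s)$. Staggering the construction—introducing $T_n$ only at stage $s=n$—keeps the number of active gap constraints at each stage finite, and the infinitude of every $G_n^s$ lets the $t_n^s$'s be picked freely; this is exactly what makes both the recursion on the $r_s$ and the monotone filling of the $\alpha_n$ on $\omega\setminus T_n$ go through uniformly.
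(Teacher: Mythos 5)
Your proof is correct and follows essentially the same route as the paper's: staggered pseudo-intersections $T_n$ of the (chain-normalized) families $\gamma_n$, a gap-controlled strictly increasing sequence $R=\{r_s:s\in\omega\}$, and strictly increasing maps $\alpha_n$ carrying $T_n$ onto a cofinite subset of $R$, with the \textsf{FIP} of $\delta_0$ supplying an infinite subset of $R$ on the $0$-th coordinate. The only (cosmetic) deviation is that you take $\alpha_0(s)=r_s$ on all of $\omega$ rather than anchoring $\alpha_0$ on a chosen member $F(0)\in\delta_0$ as the paper does; both variants verify (\ding{168}) in the same way.
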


\begin{lem}\label{lem.perestrojka.LPB.po.sdvigu.fil'tra}
   Suppose that $\hspace{1pt}\mathbf{F}\hspace{-1pt}$ is a $\pi\!$-tree on a space ${X}\hspace{-1pt}$ and $\alpha\colon\omega\to\omega$ is a strictly increasing function.
   Then there exists a $\pi\!$-tree $\mathbf{H}\hspace{-1pt}$ on ${X}\hspace{-1pt}$ such that

   \begin{itemize}
   \item [\textup{(\ding{170})}]
      $\mathsurround=0pt
      \alpha\:\big[\rise_{\mathbf{F}}({p},{U})\big]
      \subseteq
      \rise_{\mathbf{H}}({p},{U})\quad$
      for all ${p}\in{X}\hspace{-1pt}$ and $\,{U}\in\nbhds({p},{X}).$
   \end{itemize}
\end{lem}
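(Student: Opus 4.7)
The plan is to apply Lemma~\ref{lem.F.vs.S.by.isomorph}\,(c) to reduce to the case $\mathbf{F}=\mathbf{S}$ with $X=({}^{\omega}\omega,\tau),$ $\tau\supseteq\tau_{\scriptscriptstyle\mathcal{N}},$ and then to construct $\mathbf{H}\hspace{-1pt}$ as the pullback of $\mathbf{S}$ along an explicit bijection $\psi\colon{}^{\omega}\omega\to{}^{\omega}\omega$ that ``stretches'' the coordinates according to~$\alpha.$

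To build $\psi,$ set $s_0\coloneq\alpha(1)$ and $s_n\coloneq\alpha(n{+}1)-\alpha(n)$ for $n\geqslant 1$ (so that $\sum_{m<n}s_m=\alpha(n)$ holds for every $n\geqslant 1$), fix bijections $h_n\colon\omega\to{}^{s_n}\omega$ (possible since both sides are countably infinite), and let
\begin{equation*}
\psi(q)\ \coloneq\ h_0(q(0))\,\hat{\:}\,h_1(q(1))\,\hat{\:}\,h_2(q(2))\,\hat{\:}\,\ldots\,;
\end{equation*}
then define $\skeleton\mathbf{H}\coloneq({}^{{<}\omega}\omega,\subset)$ with leaves $\mathbf{H}_v\coloneq\psi^{-1}[\mathbf{S}_v].$ A direct unravelling shows every $\mathbf{H}_v$ is a disjoint union of basic sets $\mathbf{S}_u\subseteq X,$ hence $\tau$-clopen, so $(\mathrm{id},\psi)$ is an isomorphism between $\mathbf{H}\hspace{-1pt}$ and $\mathbf{S},$ the pushforward topology satisfies $\psi[\tau]\supseteq\tau_{\scriptscriptstyle\mathcal{N}},$ and $\psi$ is a homeomorphism $X\to({}^{\omega}\omega,\psi[\tau]).$ By Lemmas~\ref{lem.pi.and.B.f.trees.vs.S}\,(c) and~\ref{lem.F.vs.S.by.isomorph}\,(c), it then suffices to verify that $\mathbf{S}$ grows into $({}^{\omega}\omega,\psi[\tau]);$ via the isomorphism this amounts to $\rise_\mathbf{H}(p,U)\neq\varnothing$ for all $p\in X$ and $U\in\nbhds(p,X),$ which is exactly what property~(\ding{170}) delivers.

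The heart of the argument is~(\ding{170}). Given $p,$ $U$ and $n\in\rise_{\mathbf{S}}(p,U),$ set $z\coloneq\psi(p)\upharpoonright\alpha(n)\in\scope_\mathbf{H}(p).$ For $n\geqslant 1$ the block structure gives $\mathbf{H}_z=\mathbf{S}_{p\upharpoonright n}$ and
\begin{equation*}
\mathbf{H}_{z\hat{\:}\langle j\rangle}\ =\ \bigsqcup_{i\in E_j}\mathbf{S}_{p\upharpoonright n\hat{\:}\langle i\rangle},\qquad E_j\coloneq\{i\in\omega:h_n(i)(0)=j\},
\end{equation*}
where $\{E_j:j\in\omega\}$ partitions $\omega.$ Since each $i\in\omega$ belongs to exactly one $E_j,$ the finite exceptional set $F\coloneq\{i:\mathbf{S}_{p\upharpoonright n\hat{\:}\langle i\rangle}\not\subseteq U\}$ meets only finitely many $E_j$'s; hence $\mathbf{H}_{z\hat{\:}\langle j\rangle}\subseteq U$ for cofinitely many $j,$ and so $\shoot_\mathbf{H}(z)\gg\{U\},$ yielding $\alpha(n)\in\rise_\mathbf{H}(p,U).$

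The main obstacle is the case $n=0$ with $\alpha(0)>0,$ where $\psi(p)\upharpoonright\alpha(0)$ depends on $p(0),$ so that $\mathbf{H}_z=\bigsqcup_{i\in E}\mathbf{S}_{\langle i\rangle}$ with $E\coloneq\{i:h_0(i)\upharpoonright\alpha(0)=h_0(p(0))\upharpoonright\alpha(0)\}$ is only a proper piece of $X.$ The sons still decompose along the $\alpha(0)$-th coordinate of $h_0,$ giving a partition $\{E_j:j\in\omega\}$ of $E$ to which the same counting argument applies; crucially, the inequality $\alpha(1)>\alpha(0)$ forces $E_j\neq\varnothing$ for every $j,$ securing nonempty leaves. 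Growth of $\mathbf{H}\hspace{-1pt}$ into $X$ then follows from~(\ding{170}) applied to any $n\in\rise_{\mathbf{F}}(p,U),$ which is nonempty since $\mathbf{F}\hspace{-1pt}$ is itself a $\pi\!$-tree.
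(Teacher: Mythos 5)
Your proof is correct, but it takes a genuinely different route from the paper's. The paper works intrinsically with the foliage hybrid operation of Section~\ref{f.h.o.}: for each node ${v}$ of $\mathbf{F}$ it builds a foliage graft $\mathbf{G}({v})$ of height ${k}({v})+{1},$ where ${k}({v})=\alpha\big(\height_{\mathbf{F}}({v})\big)-\alpha\big(\height_{\mathbf{F}}({v})-{1}\big),$ inserting intermediate levels between ${v}$ and $\sons_{\mathbf{F}}({v})$ whose leaves are unions of the sons' leaves, and then sets $\mathbf{H}=\fhybr(\mathbf{F},\varphi);$ the verification that $\mathbf{H}$ is a $\pi\!$-tree and that heights transform by $\height_{\mathbf{H}}({v})=\alpha\big(\height_{\mathbf{F}}({v})-{1}\big)+{1}$ leans on the cited hybrid lemmas. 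You instead normalize to $\mathbf{F}=\mathbf{S}$ via (c) of Lemma~\ref{lem.F.vs.S.by.isomorph} and realize the same level-insertion concretely, by regrouping the coordinates of ${}^{\omega}\omega$ into blocks of lengths ${s}_{{n}}$ through the bijections ${h}_{{n}}$ and pulling $\mathbf{S}$ back along $\psi.$ The two constructions produce essentially the same tree --- your node $\psi({p}){\upharpoonright}\alpha({n})$ has leaf $\mathbf{S}_{{p}{\upharpoonright}{n}}$ and plays the role of the paper's node ${t}$ with $\height_{\mathbf{H}}({t})=\alpha({r})$ --- but your version is self-contained and avoids the graft machinery, at the cost of the slightly awkward special case ${n}={0}$ with $\alpha({0})>{0}$ (which you handle correctly: ${s}_{{0}}=\alpha({1})>\alpha({0})$ guarantees that the cells ${E}_{{j}}$ of the partition are nonempty, so the leaves stay nonempty and the cofiniteness count goes through). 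The only point worth making explicit is the transport of property (\ding{170}) back from the normalized setting to the original pair $(\mathbf{F},{X}),$ via the identity $\rise_{\mathbf{F}}({p},{U})=\rise_{\mathbf{S}}\big(\psi_{0}({p}),\psi_{0}[{U}]\big)$ for the normalizing isomorphism $(\varphi_{0},\psi_{0});$ this is routine and is the same reduction the paper itself uses in the proof of Lemma~\ref{rem.rise.vs.grow.into}.
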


\begin{proof}[\textbf{\textup{Proof of Theorem~\ref{th.2}}}]
   Note that part (a) follows from part (b). Indeed, let $\beta\in{A},$ $\mathbf{G}\coloneq\mathbf{F}(\beta),$ and ${Y}\coloneq{X}_{\beta}.$
   Since $\gamma_{\beta}$ has the \textup{\textsf{FIP}}, $\gamma_{\beta}\gg\riseee_{\mathbf{G}}({Y}),$ and (by (a) of Lemma~\ref{rem.rise.vs.grow.into}) $\varnothing\nin\riseee_{\mathbf{G}}({Y}),$ then
   $\riseee_{\mathbf{G}}({Y})$ also has the \textup{\textsf{FIP}}. The case when ${|}{A}{|}={1}$ is trivial, so we may assume that ${A}\setminus\{\beta\}\neq\varnothing,$ and then the space
   $$
   \prod_{\alpha\in{A}}{X}_{\alpha}\ =
   \ {Y}\times\!\!\!\!\prod_{\ \alpha\in{A}\setminus\{\beta\}}\!\!\!\!\!{X}_{\alpha}
   $$
   has a $\pi\!$-tree by (b).

   To prove (b) it is convenient to assume that ${A}=\Lambda\setminus\{{0}\}$ and
   ${2}\leqslant\Lambda\in\omega\hspace{2pt}{\cup}\hspace{1pt}\{\omega\}.$
   Put ${X}_{{0}}\coloneq{Y}\hspace{-1pt}$ and $\mathbf{F}({0})\coloneq\mathbf{G};$ then we must prove that the space
   $\prod_{{n}\in\Lambda}{X}_{{n}}$
   has a $\pi\!$-tree. Let $\delta_{{n}}\coloneq\riseee_{\mathbf{F}({n})}({X}_{{n}})$ for every ${n}\in\Lambda.$
   Then using Lemma~\ref{rem.rise.vs.grow.into} we see that $\delta_{{n}}\subseteq\pset(\omega)\,{\setminus}\,\{\varnothing\}$
   and $\bigcap\delta_{{n}}=\varnothing$
   for all ${{n}}\in\Lambda,$ so we can apply Lemma~\ref{lem.sdvig.filtrov}.
   Then we get a sequence $\langle\alpha_{{n}}\rangle_{{n}\in\Lambda}$ of strictly increasing functions $\alpha_{{n}}\colon\omega\to\omega$
   such that condition (\ding{168}) holds.
   Next, applying Lemma~\ref{lem.perestrojka.LPB.po.sdvigu.fil'tra} to
   $\mathbf{F}_{\hspace{-1.2pt}{n}},$${X}_{{n}},$ and $\alpha_{{n}}$ for every ${n}\in\Lambda,$ we obtain a sequence
   $\big\langle\mathbf{H}({n})\big\rangle_{{n}\in\Lambda}$ such that for every ${{n}\in\Lambda},$
   $\mathbf{H}({n})$ is a $\pi\!$-tree on ${X}_{{n}}$ and
   \begin{equation}\label{*35.1}
      \alpha_{{n}}\:\big[\rise_{\mathbf{F}({n})}({p},{U})\big]
      \:\subseteq\:
      \rise_{\mathbf{H}({n})}({p},{U})
      \qquad\text{for all}\enskip{p}\in{X}_{{n}}\enskip\text{and}\enskip{U}\in\nbhds({p},{X}_{{n}}).
   \end{equation}

   Now we can use Theorem~\ref{th.1} to show that the product $\prod_{{n}\in\Lambda}{X}_{{n}}$ has a $\pi\!$-tree.
   Suppose that ${I}\subseteq\Lambda$ is finite and nonempty; then ${I}\subseteq{k}+{1}$ for some ${k}\in\Lambda.$ Let ${R}_{{i}}\in\riseee_{\mathbf{H}({i})}({X}_{{i}})$
   for every ${i}\in{k}+{1};$ we must show that the set $\bigcap_{{i}\in{I}}{R}_{{i}}$ is infinite.
   For each ${i}\in{k}+{1},$ ${R}_{{i}}=\rise_{\mathbf{H}({i})}({p}_{{i}},{U}_{{i}})$ for some ${p}_{{i}}\in{X}_{{i}}$ and ${U}_{{i}}\in\nbhds({p}_{{i}},{X}_{{i}}).$
   Put
   $
   {A}_{{i}}\coloneq\alpha_{{i}}
   \:\big[\rise_{\mathbf{F}({i})}({p}_{{i}},{U}_{{i}})\big]
   $
   for every ${i}\in{k}+{1};$
   then by (\ref{*35.1}) we have ${A}_{{i}}\subseteq{R}_{{i}}.$
   Now,
   $$
   {A}_{{i}}\,\in\,
   \big\{\alpha_{{i}}[{R}]:{R}\in\riseee_{\mathbf{F}({i})}({X}_{{i}})\big\}
   \,=\,\big\{\alpha_{{i}}[{D}]:{D}\in\delta_{{i}}\big\}
   \qquad\text{for all}\enskip{i}\in{k}+{1},
   $$
   so (by (\ding{168}) of Lemma~\ref{lem.sdvig.filtrov}) the $\bigcap_{{i}\in{k}+{1}}{A}_{{i}}$ is infinite,
   hence the $\bigcap_{{i}\in{k}+{1}}{R}_{{i}}$ is infinite,
   and then the $\bigcap_{{i}\in{I}}{R}_{{i}}$ is infinite too.
\end{proof}

\begin{proof}[\textbf{\textup{Proof of Lemma~\ref{lem.sdvig.filtrov}}}]
   It is not hard to show that each $\gamma_{{n}}$ is not empty, so we may assume that
   $\gamma_{{n}}=\big\{{G}_{{i}}({n}):{i}\in\omega\big\}$ for every ${n}\in\Lambda\setminus\{{0}\}.$
   Since $\delta_{{0}}$ has the \textup{\textsf{FIP}} and $\bigcap\delta_{{0}}=\varnothing,$
   then
   \begin{equation}\label{*a0}
      \textstyle\bigcap\varepsilon\enskip
      \text{is infinite}
      \qquad\text{for all}\enskip\varepsilon\in[\delta_{{0}}]^{{<}\omega}\,{\setminus}\,\{\varnothing\}.
   \end{equation}
   Also $\bigcap\gamma_{{n}}=\varnothing$ for all ${n}\in\Lambda\setminus\{{0}\}$
   (because $\gamma_{{n}}\gg\delta_{{n}},$ $\varnothing\nin\delta_{{n}}\neq\varnothing,$ and $\bigcap\delta_{{n}}=\varnothing$), so by the same reasons we get
   \begin{equation}\label{*a1}
      \bigcap_{{j}\leqslant{i}}{G}_{{j}}({n})\enskip
      \text{is infinite}
      \qquad\text{for all}\enskip{n}\in\Lambda\setminus\{{0}\}\enskip\text{and}\enskip{i}\in\omega.
   \end{equation}
   Now, using (\ref{*a1}), for every ${n}\in\Lambda\setminus\{{0}\}$ and ${i}\in\omega,$ we can choose
   ${f}_{{i}}({n})\in\bigcap_{{j}\leqslant{i}}{G}_{{j}}({n})$ in such a way that
   \begin{equation}\label{*a2}
      {f}_{{i}+{1}}({n})>{f}_{{i}}({n})
      \qquad\text{for all}\enskip{n}\in\Lambda\setminus\{{0}\}
      \enskip\text{and}\enskip\forall{i}\in\omega.
   \end{equation}
   Put ${F}({n})\coloneq\big\{{f}_{{i}}({n}):{i}\in\omega\big\}$ for every ${n}\in\Lambda\setminus\{{0}\};$
   then
   $\big\{{F}({n})\hspace{1pt}{\setminus}\,{m}:{m}\,{\in}\:\omega\big\}\gg\gamma_{{n}}$
   for all ${n}\in\Lambda\setminus\{{0}\},$
   and hence
   \begin{equation}\label{*a3}
      \big\{{F}({n})\setminus{m}:{m}\in\omega\big\}\:\gg\:\delta_{{n}}
      \qquad\text{for all}\enskip{n}\in\Lambda\setminus\{{0}\}.
   \end{equation}
   Let ${F}({0})\in\delta_{{0}};$ then ${F}({0})$ is infinite by (\ref{*a0}), so we may assume that
   ${F}({0})=\big\{{f}_{{i}}({0}):{i}\in\omega\big\}$
   and ${f}_{{i}+{1}}({0})>{f}_{{i}}({0})$ for all ${i}\in\omega.$
   Put ${h}_{-{1}}\coloneq-{1}$ and ${f}_{-{1}}({n})\coloneq-{1}$ for every ${n}\in\Lambda.$
   By recursion on ${i}\in\omega,$ we can build a strictly increasing sequence $\langle{h}_{{i}}\rangle_{{i}\in\omega}$ of natural numbers in such a way that
   \begin{equation}\label{*a4}
      {h}_{{i}}\;>\;{h}_{{i}-{1}}+{f}_{{i}-{j}}({j})-{f}_{{i}-{j}-{1}}({j})
      \qquad\text{for all}\enskip{i}\in\omega
      \enskip\text{and}\enskip{j}\in\Lambda\cap({i}+{1}).
   \end{equation}
   Let $\langle\beta_{{n}}\rangle_{{n}\in\Lambda}$ be a sequence of functions with
   $$
   \domain\beta_{{n}}\;\coloneq\;{F}({n})\cup\{{-}{1}\}\;=\;
   \big\{{f}_{{l}}({n}):{l}\in\omega\cup\{{-}{1}\}\big\}
   $$
   and such that
   \begin{equation}\label{*a5}
      \beta_{{n}}\big({f}_{{l}}({n})\big)={h}_{{n}+{l}}
      \qquad\text{for all}\enskip{n}\in\Lambda
      \enskip\text{and}\enskip{l}\in\omega\cup\{{-}{1}\}.
   \end{equation}
   Note that (\ref{*a5}) implies
   \begin{equation}\label{*a6}
      \beta_{{n}}\:\big[{F}({n})\big]\,=\,\{{h}_{{j}}:{j}\in\omega\setminus{n}\}
      \qquad\text{for all}\enskip{n}\in\Lambda.
   \end{equation}
   Now, for all ${n}\in\Lambda$ and ${l}\in\omega,$ (\ref{*a4}) with ${i}={n}+{l},$ ${j}={n}$ says that
   $$
   {h}_{{n}+{l}}-{h}_{{n}+{l}-{1}}\;>\;{f}_{{n}+{l}-{n}}({n})-{f}_{{n}+{l}-{n}-{1}}({n})
   \;=\;{f}_{{l}}({n})-{f}_{{l}-{1}}({n}),
   $$
   so by (\ref{*a5}) we have
   $$
   \beta_{{n}}\big({f}_{{l}}({n})\big)-\beta_{{n}}\big({f}_{{l}-{1}}({n})\big)\;>\;
   {f}_{{l}}({n})-{f}_{{l}-{1}}({n})
   \qquad\text{for all}\enskip{n}\in\Lambda
   \enskip\text{and}\enskip{l}\in\omega.
   $$
   This means that for each ${n}\in\Lambda,$ we can choose a strictly increasing function $\alpha_{{n}}\colon\omega\to\omega$ such that $\alpha_{{n}}{\upharpoonright}\hspace{0.5pt}{F}({n})=\beta_{{n}}{\upharpoonright}\hspace{0.5pt}{F}({n}).$

   Now we prove that condition (\ding{168}) is satisfied.
   Suppose that ${k}\in\Lambda$ and 
   for every ${i}\in{k}+{1},$
   ${A}_{{i}}=\alpha_{{i}}\:\big[{D}({i})\big]$ for some ${D}({i})\in\delta_{{i}}.$
   Using (\ref{*a3}), for each ${i}\in({k}+{1})\,{\setminus}\,\{0\},$ we can choose some ${m}_{{i}}\in\omega$ such that
   ${D}({i})\supseteq{F}({i})\setminus{m}_{{i}}.$
   Therefore, by (\ref{*a6}), for each ${i}\in({k}+{1})\,{\setminus}\,\{0\},$
   we can find some ${l}_{{i}}\in\omega$
   such that ${A}_{{i}}\supseteq\{{h}_{{j}}:{j}\in\omega\setminus{l}_{{i}}\}.$
   It follows that
   $$
   \bigcap_{{i}\in({k}+{1})\setminus\{{0}\}}\!\!\!\!\!\!{A}_{{i}}
   \ \supseteq\ \{{h}_{{j}}:{j}\in\omega\setminus{l}\}
   \qquad\text{for some}\enskip{l}\in\omega.
   $$
   Now, ${D}({0})\cap{F}({0})$ is infinite by (\ref{*a0}), $\alpha_{{0}}\:\big[{F}({0})\big]=\{{h}_{{j}}:{j}\in\omega\}$ by (\ref{*a6}), and $\alpha_{{0}}$ is injective,
   therefore ${A}_{{0}}\cap\{{h}_{{j}}:{j}\in\omega\}$ is infinite.
   This means that $\bigcap_{{i}\in{k}+{1}}{A}_{{i}}$ is infinite too.
\end{proof}

\begin{proof}[\textbf{\textup{Proof of Lemma~\ref{lem.perestrojka.LPB.po.sdvigu.fil'tra}}}]
   In this proof we apply the foliage hybrid operation, see details in Section~\ref{f.h.o.}.
   Put $\alpha({-}{1})\coloneq{-}{1}.$
   Suppose that ${v}\in\nodes\mathbf{F}.$ Set
   $$
   {k}({v})\ \coloneq
   \ \alpha\big(\height_{\mathbf{F}}({v})\big)-\alpha\big(\height_{\mathbf{F}}({v})-{1}\big);
   $$
   then ${k}({v})\in\omega\,{\setminus}\,\{{0}\}.$
   Let $\mathcal{T}({v})$ be a tree isomorphic to the tree $({}^{{<}{k}({v})+{1}}\omega,\subset)$ and such that ${0}_{\mathcal{T}({v})}={v}$ and $\maxel\mathcal{T}({v})=\sons_{\mathbf{F}}({v}).$
   Let $\mathbf{G}({v})$ be a foliage tree with $\skeleton\mathbf{G}({v})\coloneq\mathcal{T}({v})$
   and with leaves defined by recursion on ${i}\in{k}({v})+{1}$ as follows:

   \begin{itemize}
   \item [\textup{(base)}]
       If ${i}={0}$ and ${t}\in\level_{\mathcal{T}({v})}\big({k}({v})-{i}\big)$ \big(that is, if ${t}\in\maxel\mathcal{T}({v})$\big),

       then $\mathbf{G}({v})_{{t}}\coloneq\mathbf{F}_{\hspace{-1.2pt}{t}}.$
   \item [\textup{(step)}]
       If ${1}\leqslant{i}\leqslant{k}({v})$ and ${t}\in\level_{\mathcal{T}({v})}\big({k}({v})-{i}\big),$

       then $\mathbf{G}({v})_{{t}}\coloneq{\displaystyle\bigcup}\big\{\mathbf{G}({v})_{{s}}:{s}\in\sons_{\mathcal{T}({v})}({t})\big\}.$
   \end{itemize}

   It is not hard to show the following (we use here the terminology of Definition~\ref{def.graft}):

   \begin{itemize}
   \item [\textup{(c1)}]
      $\mathsurround=0pt
      {0}_{\mathbf{G}({v})}={v}$ \enskip \text{and} \enskip $\maxel\mathbf{G}({v})=\sons_{\mathbf{F}}({v})=\level_{\mathbf{G}({v})}\big({k}({v})\big);$
   \item [\textup{(c2)}]
      $\mathsurround=0pt
      \mathbf{G}({v})_{{v}}=\mathbf{F}_{\hspace{-1.2pt}{v}};$
    \item [\textup{(c3)}]
      $\mathsurround=0pt
      \mathbf{G}({v})\,$ is a foliage graft for $\mathbf{F};$
   \item [\textup{(c4)}]
      $\mathsurround=0pt
      \cut\big(\mathbf{F},\mathbf{G}({v})\big)=\varnothing;$
   \item [\textup{(c5)}]
      $\mathsurround=0pt
      \mathbf{G}({v})\,$ is $\omega\!$-branching, locally strict, open in ${X},$ and
      has bounded chains;
   \item [\textup{(c6)}]
      $\mathsurround=0pt
      \height\mathbf{G}({v})={k}({v})+{1};$
   \item [\textup{(c7)}]
      $\mathsurround=0pt
      \shoot_{\mathbf{G}({v})}({t})\gg\shoot_{\mathbf{F}}({v})\quad$
      for all ${t}\in\nodes\mathbf{G}({v})\setminus\maxel\mathbf{G}({v});$
   \item [\textup{(c8)}]
      $\mathsurround=0pt
      \expl\big(\mathbf{F},\mathbf{G}({v})\big)=\varnothing.$
   \end{itemize}

   Now let $\varphi\coloneq\big\{\mathbf{G}({v}):{v}\in\nodes\mathbf{F}\big\}.$
   We may assume that
   $$\impl\mathbf{G}({v})\cap\impl\mathbf{G}({u})=\varnothing
   \qquad\text{for all }\;{v}\neq{u}\in\nodes\mathbf{F},
   $$
   so $\varphi$ is a consistent family of foliage grafts for $\mathbf{F}.$
   Let $\mathbf{H}\coloneq\fhybr(\mathbf{F},\varphi);$ note that $\loss(\mathbf{F},\varphi)=\varnothing$ by (c4). By induction on $\height_{\mathbf{F}}({v}),$ we can prove that
   \begin{equation}\label{*j0}
      \height_{\mathbf{H}}({v})\,=\,\alpha\big(\height_{\mathbf{F}}({v})-{1}\big)+{1}
      \qquad\text{for all}\enskip{v}\in\nodes\mathbf{F}.
   \end{equation}
   Indeed, if $\height_{\mathbf{F}}({v})={0},$ then ${v}={0}_{\mathbf{F}},$ so ${v}={0}_{\mathbf{H}},$ and hence
   $$
   \height_{\mathbf{H}}({v})\,=\,{0}\,=\,\alpha\big(\height_{\mathbf{F}}({v})-{1}\big)+{1}.
   $$
   If $\height_{\mathbf{F}}({v})\geqslant{1},$ then let ${t}$ be the node in $\mathbf{F}\hspace{-1pt}$ such that ${v}\in\sons_{\mathbf{F}}({t}),$ and then inductively we can write
   $$
   \height_{\mathbf{H}}({v})\ =\ \height_{\mathbf{H}}({t})+\height_{\mathbf{G}({t})}({v})\ =
   \ \height_{\mathbf{H}}({t})+\big(\height{\mathbf{G}({t})}-{1}\big)\ =
   $$
   $$
   \height_{\mathbf{H}}({t})+\big({k}({t})+{1}\big)-{1}\ =
   \ \alpha\big(\height_{\mathbf{F}}({t})-{1}\big)+{1}+{k}({t})\ =
   $$
   $$
      \alpha\big(\height_{\mathbf{F}}({t})-{1}\big)+{1}
      +\alpha\big(\height_{\mathbf{F}}({t})\big)
      -\alpha\big(\height_{\mathbf{F}}({t})-{1}\big)\ =
   $$
   $$
   \alpha\big(\height_{\mathbf{F}}({t})\big)+{1}\ =
   \ \alpha\big(\height_{\mathbf{F}}({v})-{1}\big)+{1}.
   $$
   Now, (c4)--(c6) with Lemma~\ref{lem.when.f.hybr.is.pi.tree} say that $\mathbf{H}$ is a Baire foliage tree on ${X}\hspace{-1pt}$ and (c7)--(c8) imply that each $\mathbf{G}({v})$ preserves shoots of $\mathbf{F}\hspace{-1pt}$ (see Definition~\ref{def.shoots.into}),
   so $\mathbf{H}$ grows into ${X}$ by Lemmas~\ref{lem.when.f.hyb.sprts.thrgh}  and~\ref{l.grows.into.subspace}. Therefore $\mathbf{H}$ is a $\pi\!$-tree on ${X}.$

   Let us show that (\ding{170}) holds.
   Suppose that ${p}\in{X},$ ${U}\in\nbhds({p},{X}),$ and ${r}\in\rise_{\mathbf{F}}({p},{U}).$
   Then ${r}=\height_{\mathbf{F}}({v})$ for some node ${v}\in\scope_{\mathbf{F}}({p})$
   such that $\shoot_{\mathbf{F}}({v})\gg\{{U}\}.$
   Let ${s}$ be the node in $\sons_{\mathbf{F}}({v})$ such that ${p}\in\mathbf{F}_{\hspace{-1.2pt}{s}}$ and let ${t}$ be the node in $\mathbf{G}({v})$ such that ${s}\in\sons_{\mathbf{G}({v})}({t}).$
   Then ${t}\in\scope_{\mathbf{H}}({p})$ and using (c7) and (a) of Proposition~\ref{prop.hybr} we obtain
   \begin{equation}\label{*j1}
   \shoot_{\mathbf{H}}({t})=\shoot_{\mathbf{G}({v})}({t})\gg\shoot_{\mathbf{F}}({v}),
   \end{equation}
   so $\shoot_{\mathbf{H}}({t})\gg\{{U}\},$ and hence $   \height_{\mathbf{H}}({t})\in\rise_{\mathbf{H}}({p},{U}).$
   Therefore to complete the proof it is enough to show that $\alpha({r})=\height_{\mathbf{H}}({t}).$
   Indeed, using (c6) and (\ref{*j0}) we have
   $$
   \height_{\mathbf{H}}({t})\ =
   \ \height_{\mathbf{H}}({v})+\height_{\mathbf{G}({v})}({t})\ =
   \ \height_{\mathbf{H}}({v})+\height_{\mathbf{G}({v})}({s})-{1}\ =
   $$
   $$
   \height_{\mathbf{H}}({v})+\big(\height\mathbf{G}({v})-{1}\big)-{1}\ =
   \ \height_{\mathbf{H}}({v})+\big({k}({v})+{1}\big)-{2}\ =
   $$
   $$
   \alpha\big(\height_{\mathbf{F}}({v})-{1}\big)+{1}+\alpha\big(\height_{\mathbf{F}}({v})\big)
   -\alpha\big(\height_{\mathbf{F}}({v})-{1}\big)-{1}\ =
   \ \alpha\big(\height_{\mathbf{F}}({v})\big)\ =
   \ \alpha({r}).
   $$
\end{proof}

\section{Nice $\pi\!$-tree for a co-countable subspace}

In this section we prove Corollary~\ref{cor.pi.tree.for.X-A}, which states that if a space ${X}$ has a ``very nice'' $\pi\!$-tree (that is, a $\pi\!$-tree $\mathbf{F}\hspace{-1pt}$ such that
$\cofin\omega\gg\riseee_{\mathbf{F}}({X})$) and if ${A}\subseteq{X}\hspace{-1pt}$ is at most countable, then the subspace ${X}\setminus{A}$ has a ``nice'' $\pi\!$-tree --- that is, a $\pi\!$-tree that satisfies the conditions of Theorem~\ref{th.2}.
This result allows to apply Theorem~\ref{th.2} to co-countable subspaces of the Sorgenfrey line, see (c) of Lemma~\ref{lem.pi.tree.for.N.and.Sorg.l} and Corollary~\ref{cor.rise.in.N.and.sorg.line} in Section~\ref{sect.examples}.

\begin{prop}\label{prop.f.tree.for.X-A}
    Suppose that $\mathbf{F}\hspace{-1pt}$ is a Baire foliage tree on a space ${X}\hspace{-1pt}$ and ${A}\subseteq{X}\hspace{-1pt}$ is at most countable.
    Then there exists a Baire foliage tree $\mathbf{H}\hspace{-1pt}$ on the subspace ${X}\setminus{A}$ such that for every ${p}\in{X}\setminus{A},$ there is a strictly increasing function ${f}_{{p}}\colon\omega\to\omega$ with a property

   \begin{itemize}
   \item [\textup{(\ding{169})}]
      $\mathsurround=0pt
      \big\{\,2{n}\,{+}\,{1}\::\:{n}\,{\in}\,\omega\textup{\ \,\textsf{and}\ }
      {f}_{{p}}({n})\,{\in}\hspace{1pt}\rise_{\mathbf{F}}({p},{U})\,\big\}
      \;\subseteq\;\rise_{\mathbf{H}}({p},{U}{\setminus}\hspace{0.7pt}{A})\quad\ $
      for all ${U}\in\nbhds({p},{X}).$
   \end{itemize}
\end{prop}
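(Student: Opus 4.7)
By part (b) of Lemma~\ref{lem.F.vs.S.by.isomorph}, I may assume without loss of generality that $\mathbf{F}=\mathbf{S}$ and $X=({}^{\omega}\omega,\tau)$ for some topology $\tau\supseteq\tau_{\scriptscriptstyle\mathcal{N}}$, so that $\nodes\mathbf{F}={}^{<\omega}\omega$ and each $\mathbf{S}_{v}$ is clopen in $X$ of cardinality $2^{\omega}$. The case $A=\varnothing$ is trivial (take $\mathbf{H}:=\mathbf{F}$ and $f_{p}(n):=2n+1$), so I enumerate $A=\{a_{n}:n\in\omega\}$ with repetitions if $A$ is finite.

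The plan is to build $\mathbf{H}$ by recursion on the height of $\skeleton\mathbf{H}:=({}^{<\omega}\omega,\subset)$, maintaining a bookkeeping map $w\colon\nodes\mathbf{H}\to\nodes\mathbf{F}$ under the invariant $\mathbf{H}_{t}=\mathbf{S}_{w(t)}\setminus A$, starting from $w(\varnothing):=\varnothing$. Transitions alternate: at an odd-to-even transition from $t$ at height $2n+1$ I take the direct $\mathbf{F}$-refinement $w(t\hat{\:}\langle k\rangle):=w(t)\hat{\:}\langle k\rangle$ for $k\in\omega$, and at an even-to-odd transition from $t$ at height $2n$ I perform an \emph{exclusion step for} $a_{n}$: if $a_{n}\notin\mathbf{S}_{w(t)}$ use the direct refinement; otherwise, writing $a_{n}=w(t)\hat{\:}\langle r_{0},r_{1},\ldots\rangle$, take the sons' $w$-values to be the countable family $\{w(t)\hat{\:}\langle r_{0},\ldots,r_{m-1},l\rangle:m\in\omega,\,l\in\omega\setminus\{r_{m}\}\}$ indexed by $\omega$ via any bijection. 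The $\mathbf{S}$-leaves of these sons are pairwise incomparable and their union telescopes to $\mathbf{S}_{w(t)}\setminus\{a_{n}\}$ by strict branches of $\mathbf{S}$, so the invariant is preserved (since $a_{n}\notin\mathbf{H}_{t}$ anyway), and every son's $w$-value avoids $a_{n}$ in its $\mathbf{S}$-scope, a property inherited by all descendants because $w$ only extends. It follows that $\mathbf{H}$ is a Baire foliage tree on $X\setminus A$: each $\mathbf{H}_{t}$ is clopen in $X\setminus A$ of cardinality $2^{\omega}$, the sons partition the parent, $\mathbf{H}_{\varnothing}=X\setminus A$, and strict branches hold because for any branch $\langle t_{k}\rangle_{k\in\omega}$ of $\mathbf{H}$ the $w(t_{k})$ form an $\mathbf{S}$-chain with limit $q\in{}^{\omega}\omega$, and the exclusion step at level $2n$ forces $q\neq a_{n}$ for every $n$, so $\bigcap_{k}\mathbf{H}_{t_{k}}=\{q\}$.

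To verify (\ding{169}), for each $p\in X\setminus A$ let $\langle t_{k}\rangle_{k\in\omega}$ be the branch of $\mathbf{H}$ through $p$ (guaranteed by (d) of Corollary~\ref{cor.scope.in.Baire.f.tree}) and define $f_{p}(n):=\height_{\mathbf{F}}\big(w(t_{2n+1})\big)$; this is strictly increasing because each transition extends $w$ by at least one position in $\mathbf{S}$. If $f_{p}(n)\in\rise_{\mathbf{F}}(p,U)$, then $\shoot_{\mathbf{F}}(w(t_{2n+1}))\gg\{U\}$ yields a cofinite $C\subseteq\omega$ with $\bigcup_{k\in C}\mathbf{S}_{w(t_{2n+1})\hat{\:}\langle k\rangle}\subseteq U$, and since the sons of $t_{2n+1}$ at the subsequent odd-to-even transition satisfy $\mathbf{H}_{t_{2n+1}\hat{\:}\langle k\rangle}=\mathbf{S}_{w(t_{2n+1})\hat{\:}\langle k\rangle}\setminus A$, the same $C$ witnesses $\bigcup_{k\in C}\mathbf{H}_{t_{2n+1}\hat{\:}\langle k\rangle}\subseteq U\setminus A$, giving $\shoot_{\mathbf{H}}(t_{2n+1})\gg\{U\setminus A\}$ and hence $2n+1\in\rise_{\mathbf{H}}(p,U\setminus A)$. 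The main obstacle is the exclusion step, where I must show that the countable family of $\mathbf{S}$-extensions genuinely partitions $\mathbf{S}_{w(t)}\setminus\{a_{n}\}$ and fits into a locally strict foliage $\omega,\omega$-tree with strict branches.
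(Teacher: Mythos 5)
Your construction is correct, and it reaches the conclusion by a genuinely different route from the paper's. The paper keeps the original tree $\mathbf{F}$ and modifies it with the foliage hybrid operation from the appendix: for each ${p}_{{i}}\in{A}$ it builds (Lemma~\ref{lem.graft.for.X-A}) a height-two foliage graft $\mathbf{G}({i})$ rooted at the unique node ${z}_{{i}}$ of a descending antichain ${M}_{{i}}$ lying in $\scope_{\mathbf{F}}({p}_{{i}})$, whose maximal nodes are exactly your telescoping family (the $\mathbf{F}\!$-sons of nodes of $\scope_{\mathbf{F}}({p}_{{i}})$ above ${z}_{{i}}$ that are not themselves in $\scope_{\mathbf{F}}({p}_{{i}})$); it then sets $\mathbf{H}\coloneq\fhybr(\mathbf{F},\varphi)$ and invokes Lemmas~\ref{prop.hybr}, \ref{lem.when.f.hybr.is.pi.tree}, \ref{lem.when.f.hyb.sprts.thrgh}, and~\ref{l.grows.into.subspace}, plus the induction (g4) showing each ${z}_{{i}}$ sits at even $\mathbf{H}\!$-height, so that every odd-height node of a branch keeps its $\mathbf{F}\!$-sons and hence its shoot. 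You instead pass to $\mathbf{F}=\mathbf{S}$ via (b) of Lemma~\ref{lem.F.vs.S.by.isomorph} and rebuild the tree from scratch on the skeleton $({}^{{<}\omega}\omega,\subset)$ with a relabelling map ${w}$ and the invariant $\mathbf{H}_{{t}}=\mathbf{S}_{{w}({t})}\setminus{A},$ performing the exclusion step for ${a}_{{n}}$ uniformly at every node of height ${2}{n}$ (your decomposition of $\mathbf{S}_{{w}({t})}\setminus\{{a}_{{n}}\}$ into the sets $\mathbf{S}_{{w}({t})\hat{\:}\langle{r}_{0},\ldots,{r}_{{m}-{1}},{l}\rangle}$ with ${l}\neq{r}_{{m}}$ is precisely the leaf identity (d2) of the paper's graft written in $\mathbf{S}\!$-coordinates) and a direct refinement at every node of odd height. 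The combinatorial core is identical; your version is more self-contained because it bypasses the graft/hybrid machinery, at the price of re-verifying by hand that $\mathbf{H}$ is an open, locally strict foliage $\omega,\omega\!$-tree with strict branches --- which you do: the exclusion family is pairwise incomparable by the first-disagreement argument and genuinely partitions $\mathbf{S}_{{w}({t})}\setminus\{{a}_{{n}}\},$ and along any branch the chain $\langle{w}({t}_{{k}})\rangle_{{k}\in\omega}$ is strictly increasing with limit avoiding every ${a}_{{n}}.$ The paper's packaging instead gets these facts from the appendix lemmas and keeps $\nodes\mathbf{H}\subseteq\nodes\mathbf{F},$ which makes the definition ${f}_{{p}}({n})=\height_{\mathbf{F}}\big({m}({p},{n})\big)$ and the check of (\ding{169}) immediate. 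The point you flag at the end as the remaining ``obstacle'' is already disposed of by the arguments you give, so I see no gap.
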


\begin{sle}\label{cor.pi.tree.for.X-A}
   Suppose that $\mathbf{F}\hspace{-1pt}$ is a $\pi\!$-tree on a space ${X}\hspace{-1pt}$ such that
   $\cofin\omega\gg\riseee_{\mathbf{F}}({X})$ and ${A}\subseteq{X}\hspace{-1pt}$ is at most countable.
   Then there exists a $\pi\!$-tree $\mathbf{H}\hspace{-1pt}$ on the subspace ${X}\setminus{A}$ such that

   \begin{itemize}
   \item [\textup{(\ding{171})}]
      $\mathsurround=0pt
      \cofin\{2{n}+1:{n}\in\omega\}\;\gg\;\riseee_{\mathbf{H}}({X}\setminus{A}).$
   \end{itemize}
\end{sle}

\begin{rem}
   In statements of Proposition~\ref{prop.f.tree.for.X-A} and Corollaries~\ref{cor.pi.tree.for.X-A} the sequence $\langle2{n}+1\rangle_{{n}\in\omega}$ can be replaced by an arbitrary sequence $\langle{k}_{{n}}\rangle_{{n}\in\omega}$ of natural numbers such that  ${k}_{{0}}\geqslant1$ and ${k}_{{n}+1}>{k}_{{n}}+{1}$ for all ${n}\in\omega.$
\end{rem}

\begin{proof}[\textbf{\textup{Proof of Corollary~\ref{cor.pi.tree.for.X-A}}}]
   Let $\mathbf{H}$ be a Baire foliage tree on the subspace ${X}\setminus{A}$ from Proposition~\ref{prop.f.tree.for.X-A}. First we show that condition (\ding{171}) is satisfied. Suppose that ${D}\in\riseee_{\mathbf{H}}({X}\setminus{A});$ that is,
   ${D}=\rise_{\mathbf{H}}({p},{U}\setminus{A})$ for some ${p}\in{X}\setminus{A}$ and ${U}\in\nbhds({p},{X}).$
   Let ${f}_{{p}}\colon\omega\to\omega$ be a function that satisfies condition (\ding{169}) of Proposition~\ref{prop.f.tree.for.X-A}.
   Since $\mathbf{F}\hspace{-1pt}$ is a $\pi\!$-tree on ${X},$ then $\rise_{\mathbf{F}}({p},{U})\neq\varnothing$ by (a) of Lemma~\ref{rem.rise.vs.grow.into}, so it follows from $\cofin\omega\gg\riseee_{\mathbf{F}}({X})$ that
   there is some ${\bar{m}}\in\omega$ such that
   $\omega\setminus{\bar{m}}\subseteq\rise_{\mathbf{F}}({p},{U}).$
   Therefore, since ${f}_{{p}}$ is strictly increasing, there is some ${\bar{n}}\in\omega$ such that ${f}_{{p}}({n})\in\rise_{\mathbf{F}}({p},{U})$ for all ${n}\geqslant{\bar{n}}.$
   Then by (\ding{169}) we have
   $$
   \big\{2{n}+1:{n}\in\omega\setminus{\bar{n}}\big\}
   \ \subseteq\ \rise_{\mathbf{H}}({p},{U}\setminus{A})\ =\ {D},
   $$
   hence (\ding{171}) is satisfied.

   It follows from the above reasoning that ${D}\neq\varnothing,$ so $\varnothing\nin\riseee_{\mathbf{H}}({X}\setminus{A}),$ and hence $\mathbf{H}$ grows into ${X}\setminus{A}$ by (a) of Lemma~\ref{rem.rise.vs.grow.into}.
   This means that $\mathbf{H}$ is a $\pi\!$-tree on ${X}\setminus{A}.$
\end{proof}

In the following lemma we use terminology of the foliage hybrid operation, see Definition~\ref{def.graft} in Section~\ref{f.h.o.}.

\begin{lem}\label{lem.graft.for.X-A}
    Suppose that $\mathbf{F}\hspace{-1pt}$ is a Baire foliage tree on a space ${X},$ ${p}\in{X},$ and ${v}\in\scope_{\mathbf{F}}({p}).$
   Then there exists a foliage tree $\mathbf{G}$ such that

   \begin{itemize}
   \item [\textup{(d1)}]
      $\mathsurround=0pt
      {0}_{\mathbf{G}}={v}\enskip$ and $\enskip\maxel\mathbf{G}=\sons_{\mathbf{G}}({0}_{\mathbf{G}});$
   \item [\textup{(d2)}]
      $\mathsurround=0pt
      \mathbf{G}_{{v}}\,=\:\mathbf{F}_{\hspace{-1.2pt}{v}}\,{\setminus}\,\{{p}\}\:\equiv
      \ \bigsqcup_{{m}\in\maxel\mathbf{G}}\mathbf{F}_{\hspace{-1.2pt}{m}};$
    \item [\textup{(d3)}]
      $\mathsurround=0pt
      \mathbf{G}\,$ is a foliage graft for $\mathbf{F};$
   \item [\textup{(d4)}]
      $\mathsurround=0pt
      \impl\mathbf{G}=\varnothing;$
   \item [\textup{(d5)}]
      $\mathsurround=0pt
      \mathbf{G}\,$ is $\omega\!$-branching, locally strict, open in ${X},$
      has bounded chains, and $\height\mathbf{G}={2}.$
   \end{itemize}
\end{lem}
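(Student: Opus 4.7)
The plan is to construct $\mathbf{G}$ as a foliage tree of height~2 whose root is $v$ and whose (maximal) sons are exactly the ``siblings off the scope-branch of $p$'' at all levels at or above~$v$. By Corollary~\ref{cor.scope.in.Baire.f.tree}~(c,d), $\scope_{\mathbf{F}}(p)$ is a branch in $\mathbf{F}\hspace{-1pt}$ and contains a unique node at each height; starting with $v_{0}\coloneq v,$ let $v_{{n}+{1}}$ be the unique node in $\scope_{\mathbf{F}}(p)\cap\sons_{\mathbf{F}}(v_{{n}}),$ which exists and is unique because $\mathbf{F}\hspace{-1pt}$ is locally strict with strict branches. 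Set
$$
M\;\coloneq\;\bigcup_{{n}\in\omega}\bigl(\sons_{\mathbf{F}}(v_{{n}})\setminus\{v_{{n}+{1}}\}\bigr),
$$
which is countably infinite because each $\sons_{\mathbf{F}}(v_{{n}})$ has cardinality $\omega.$ I will declare $\nodes\mathbf{G}\coloneq\{v\}\cup M$ with $v<_{\mathbf{G}}s$ for every $s\in M$ (and $M$ an antichain), and set leaves $\mathbf{G}_{v}\coloneq\mathbf{F}_{\hspace{-1.2pt}v}\setminus\{p\}$ and $\mathbf{G}_{s}\coloneq\mathbf{F}_{\hspace{-1.2pt}s}$ for $s\in M.$

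The crucial identity behind (d2) is the decomposition
$$
\mathbf{F}_{\hspace{-1.2pt}v}\setminus\{p\}\;\equiv\;\bigsqcup_{{n}\in\omega}\bigl(\mathbf{F}_{\hspace{-1.2pt}v_{{n}}}\setminus\mathbf{F}_{\hspace{-1.2pt}v_{{n}+{1}}}\bigr)\;\equiv\;\bigsqcup_{s\in M}\mathbf{F}_{\hspace{-1.2pt}s}.
$$
The first equality uses that $\mathbf{F}\hspace{-1pt}$ is nonincreasing (so the sequence $\mathbf{F}_{\hspace{-1.2pt}v_{{n}}}$ is decreasing) together with strict branches applied to the branch $\scope_{\mathbf{F}}(p)\cap\{v_{{n}}:{n}\in\omega\}$ (Corollary~\ref{cor.scope.in.Baire.f.tree}~(a) gives nonincreasing; strict branches give $\bigcap_{{n}\in\omega}\mathbf{F}_{\hspace{-1.2pt}v_{{n}}}=\{p\}$). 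The second uses local strictness at each $v_{{n}}:$ $\mathbf{F}_{\hspace{-1.2pt}v_{{n}}}\equiv\bigsqcup_{s\in\sons_{\mathbf{F}}(v_{{n}})}\mathbf{F}_{\hspace{-1.2pt}s},$ so removing $\mathbf{F}_{\hspace{-1.2pt}v_{{n}+{1}}}$ yields the disjoint union over $\sons_{\mathbf{F}}(v_{{n}})\setminus\{v_{{n}+{1}}\}.$

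For (d5): local strictness of $\mathbf{G}$ is exactly (d2); $\omega$-branching follows from $|M|=\omega;$ bounded chains and $\height\mathbf{G}={2}$ are immediate; and openness in $X$ holds because each $\mathbf{F}_{\hspace{-1.2pt}s}$ ($s\in M$) is open in $X$ (as $\mathbf{F}\hspace{-1pt}$ is open), while $\mathbf{F}_{\hspace{-1.2pt}v}\setminus\{p\}$ is open since $\{p\}=\bigcap_{{n}\in\omega}\mathbf{F}_{\hspace{-1.2pt}v_{{n}}}$ is closed by Corollary~\ref{cor.scope.in.Baire.f.tree}~(b), which makes each $\mathbf{F}_{\hspace{-1.2pt}v_{{n}}}$ closed-and-open. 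For (d1) and (d4), note that by construction $\nodes\mathbf{G}\subseteq\nodes\mathbf{F}\hspace{-1pt}$ with $0_{\mathbf{G}}=v$ and $\maxel\mathbf{G}=M=\sons_{\mathbf{G}}(0_{\mathbf{G}}),$ so no new nodes are added, giving $\impl\mathbf{G}=\varnothing.$ Finally, (d3) requires checking the graft conditions of Definition~\ref{def.graft}: since $0_{\mathbf{G}}=v\in\nodes\mathbf{F}\hspace{-1pt}$ and every $s\in\maxel\mathbf{G}$ lies in $v{\downfilledspoon}_{\hspace{-0.5pt}\mathbf{F}}$ (being a descendant of $v$ in $\mathbf{F}$), $\mathbf{G}$ fits the graft template.

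The main obstacle is the decomposition identity in the second paragraph. Local strictness alone gives only the decomposition at a single level; strict branches alone only identify the intersection as a singleton. It is the combination that upgrades these into the countable disjoint partition of $\mathbf{F}_{\hspace{-1.2pt}v}\setminus\{p\}$ by the $\mathbf{F}_{\hspace{-1.2pt}s}$ for $s\in M,$ which is what drives both (d2) and the $\omega$-branching in (d5). Once this is in place, all remaining items reduce to immediate verifications or quotations from Corollary~\ref{cor.scope.in.Baire.f.tree}.
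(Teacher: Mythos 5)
Your construction is exactly the paper's: the paper sets $\mathsf{MAX}\coloneq\big(\bigcup\{\sons_{\mathbf{F}}({u}):{u}\in\scope_{\mathbf{F}}({p})\cap{v}{\downfilledspoon}_{\hspace{-0.5pt}\mathbf{F}}\}\big)\setminus\scope_{\mathbf{F}}({p}),$ which coincides with your $M,$ and takes the same two-level tree with the same leaves. The paper leaves the verification of (d1)--(d5) as ``not hard to verify,'' whereas you spell out the key disjoint-decomposition identity behind (d2); your argument is correct.
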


\begin{proof}[\textbf{\textup{Proof of Lemma~\ref{lem.graft.for.X-A}}}]
   Put
   $$
   {B}\;\coloneq\;{\bigcup}\big\{\sons_{\mathbf{F}}({u}):
   {u}\in\scope_{\mathbf{F}}({p})\,{\cap}\,{v}{\downfilledspoon}_{\hspace{-0.5pt}\mathbf{F}}\big\}
   \qquad\text{and}\qquad
   \mathsf{MAX}\;\coloneq\;{B}\setminus\scope_{\mathbf{F}}({p}).
   $$
   Let $\mathcal{T}\hspace{-1pt}$ be a partial order such that
   $$\nodes\mathcal{T}\;\coloneq\;\{{v}\}\cup\mathsf{MAX}
   \qquad\text{and}\qquad
   {<}_{\mathcal{T}}\;\coloneq\;\big\{({v},{m}):{m}\in\mathsf{MAX}\big\}.
   $$
   Then $\mathcal{T}\hspace{-1pt}$ is a tree, ${0}_\mathcal{T}={v},$ $\maxel\mathcal{T}=\mathsf{MAX},$ and $\mathcal{T}\hspace{-1pt}$ is a graft for $\skeleton\mathbf{F}.$
   Now let $\mathbf{G}$ be a foliage tree with $\skeleton\mathbf{G}\coloneq\mathcal{T}\hspace{-1pt}$ and with leaves $\mathbf{G}_{{v}}\coloneq\mathbf{F}_{\hspace{-1.2pt}{v}}\setminus\{{p}\}$ and $\mathbf{G}_{{m}}\coloneq\mathbf{F}_{\hspace{-1.2pt}{m}}$ for all ${m}\in\maxel\mathcal{T}.$ Then using (b) of Lemma~\ref{lem.F.vs.S.by.isomorph} and Corollary~\ref{cor.scope.in.Baire.f.tree} it is not hard to verify that clauses (d1)--(d5) are satisfied.
\end{proof}

\begin{proof}[\textbf{\textup{Proof of Proposition~\ref{prop.f.tree.for.X-A}}}]
   We may assume that ${A}=\big\{{p}_{{i}}:{i}\in{|}{A}{|}\big\}$ and ${p}_{{i}}\neq{p}_{{j}}$ for all ${i}\neq{j}\in{|}{A}{|}.$ First we build sequences $\langle{M}_{{i}}\rangle_{{i}\in{|}{A}{|}},$ $\langle{z}_{{i}}\rangle_{{i}\in{|}{A}{|}},$ and $\big\langle\mathbf{G}({i})\big\rangle_{{i}\in{|}{A}{|}}$ by recursion on ${i}\in{|}{A}{|}$:

   \begin{itemize}
   \item [\textup{(e1)}]
      $\mathsurround=0pt
      {M}_{{0}}\,\coloneq\,\{{0}_{\mathbf{F}}\};$
   \item [\textup{(e2)}]
      $\mathsurround=0pt
      {z}_{i}\;\coloneq\;$ the node in $\mathbf{F}\hspace{-1pt}$ such that $\{{z}_{{i}}\}={M}_{{i}}\cap\scope_{\mathbf{F}}({p}_{{i}});$
   \item [\textup{(e3)}]
      $\mathsurround=0pt
      \mathbf{G}({i})\;\coloneq\;$ the foliage tree $\mathbf{G}$ from Lemma~\ref{lem.graft.for.X-A} with ${p}={p}_{{i}}$ and ${v}={z}_{{i}};$
   \item [\textup{(e4)}]
      $\mathsurround=0pt
      {M}_{{i}+1}\;\coloneq\;\big({M}_{{i}}\,{\setminus}\,\{{z}_{i}\}\big)\,\cup\,
      {\displaystyle\bigcup}\big\{\sons_{\mathbf{F}}({m}):{m}\,{\in}\hspace{1pt}\maxel\mathbf{G}({i})\big\}.$
   \end{itemize}
The correctness of clause (e2) follows from (f3), see below.

It is not hard to verify that for each ${i}\in{|}{A}{|},$ the following conditions are satisfied:

   \begin{itemize}
   \item [\textup{(f1)}]
      $\mathsurround=0pt
      {M}_{{i}}\,$ is an antichain in $\mathbf{F}\hspace{-1pt}$
      (that is, ${u}\nless_\mathbf{F} {v}\enskip\mathsf{and}\enskip{v}\nless_\mathbf{F}{u}$
      for all ${u},{v}\in{M}_{{i}}$);
   \item [\textup{(f2)}]
      $\mathsurround=0pt
      {M}_{{i}+{1}}\subseteq({M}_{{i}}){\downfootline}_{\hspace{-0.5pt}\mathbf{F}};$
   \item [\textup{(f3)}]
      $\mathsurround=0pt
      {X}\setminus\{{p}_{{j}}:{j}\in{i}\}\;\equiv
      \ \bigsqcup_{{m}\in{M}_{{i}}}\mathbf{F}_{\hspace{-1.2pt}{m}};$
   \item [\textup{(f4)}]
      $\mathsurround=0pt
      {0}_{\mathbf{G}({i})}\in{M}_{{i}};$
   \item [\textup{(f5)}]
      $\mathsurround=0pt
      \cut\big(\mathbf{F},\mathbf{G}({i})\big)=\{{p}_{{i}}\};$
   \item [\textup{(f6)}]
      $\mathsurround=0pt
      \big\{\mathbf{G}({j}):{j}\in{i}+1\big\}\,$ is a consistent family of foliage grafts for $\mathbf{F}.$
   \end{itemize}

   Now it follows from (f6) that $\varphi\coloneq\big\{\mathbf{G}({i}):{i}\in{|}A{|}\big\}$ is a consistent family of foliage grafts for $\mathbf{F},$ so we can define $\mathbf{H}\coloneq\fhybr(\mathbf{F},\varphi).$
   Then $\mathbf{H}$ is a Baire foliage tree on ${X}\setminus{A}$ by Lemma~\ref{lem.when.f.hybr.is.pi.tree}, (f5), and (d5). Also $\mathbf{H}$ satisfies the following:

   \begin{itemize}
   \item [\textup{(g1)}]
      $\mathsurround=0pt
      \mathbf{H}\,$ has nonempty leaves.

      This follows from (b) of Corollary~\ref{cor.scope.in.Baire.f.tree}.
   \item [\textup{(g2)}]
      $\mathsurround=0pt
      \nodes\mathbf{H}\subseteq\nodes\mathbf{F}.$

      This follows from (d4).
   \item [\textup{(g3)}]
      $\mathsurround=0pt
      \mathbf{H}_{{u}}=\mathbf{F}_{\hspace{-1.2pt}{u}}\setminus{A}
      \quad$ for all ${u}\in\nodes\mathbf{H}.$

      This also follows from (d4).
   \item [\textup{(g4)}]
      $\mathsurround=0pt
      \height_{\mathbf{H}}({u})\in\{2{n}:{n}\in\omega\}\quad$
      for all ${u}\in{M}_{{i}}$ and ${i}\in{|}{A}{|}.$

      We prove (g4) by induction on ${i}\in{|}{A}{|}.$ Obviously, $\height_{\mathbf{H}}({u})$ is even when ${u}\in{M}_{{0}}.$ Assume as inductive hypothesis that the assertion of (g4) holds for all ${u}\in\bigcup_{{i}\leqslant{k}}{M}_{{i}}$ and prove it for an arbitrary ${u}\in{M}_{{{k}+{1}}}.$
      If ${u}\in{M}_{{k}},$
      then $\height_{\mathbf{H}}({u})$ is even by the inductive hypothesis.
      If ${u}\nin{M}_{{k}},$ then (e4) implies that ${u}\in\sons_{\mathbf{F}}({m})$
      for some ${m}\in\maxel\mathbf{G}({k}).$
      We have
      $$
      \maxel\mathbf{G}({k})=\sons_{\mathbf{G}({k})}({0}_{\mathbf{G}({k})})
      \qquad\text{and}\qquad
      \sons_{\mathbf{G}({k})}({0}_{\mathbf{G}({k})})=
      \sons_{\mathbf{H}}({0}_{\mathbf{G}({k})})
      $$
      by (d1) and by (a) of Proposition~\ref{prop.hybr}, so it follows from (f4) and from the inductive hypothesis that $\height_{\mathbf{H}}({m})$ is odd
      and then, using (f4) and the inductive hypothesis again, we have
      ${m}\nin\big\{{0}_{\mathbf{G}({j})}:{j}\in{k}+{1}\big\}.$
      Let us show that
      ${m}\nin\big\{{0}_{\mathbf{G}({j})}:{j}\in{|}{A}{|}\big\}.$
      If not, then
      $$
      {m}\,\in\,\big\{{0}_{\mathbf{G}({j})}:{j}\in{|}{A}{|}\setminus({k}+{1})\big\},
      \qquad\text{so}\qquad
      {m}\,\in\:\bigcup\big\{{M}_{{j}}:{j}\in{|}{A}{|}\setminus({k}+{1})\big\}
      $$
      by (f4).
      Then it follows from (f2) that ${m}\in({M}_{{k}+{1}}){\downfootline}_{\hspace{-0.5pt}\mathbf{F}}$
      --- that is, ${m}\geqslant_{\mathbf{F}}{t}$ for some ${t}\in{M}_{{k}+{1}}.$
      Since ${u}\in\sons_{\mathbf{F}}({m}),$ then ${u}>_{\mathbf{F}}{m},$ therefore ${u}>_{\mathbf{F}}{t}.$
      This contradicts (f1) because ${t},{u}\in{M}_{{k}+{1}}.$
      Now it follows from (d4) and (a) of Proposition~\ref{prop.hybr} that
      $\sons_{\mathbf{F}}({m})=\sons_{\mathbf{H}}({m}),$
      so ${u}\in\sons_{\mathbf{H}}({m}).$
      Then $\height_{\mathbf{H}}({u})\in\{{2}{n}:{n}\in\omega\}$
      because $\height_{\mathbf{H}}({m})$ is odd.
   \end{itemize}

   Now suppose that ${p}\in{X}\setminus{A};$ we must find a strictly increasing function ${f}_{{p}}\colon\omega\to\omega$ that satisfies (\ding{169}). First, using (d) of Corollary~\ref{cor.scope.in.Baire.f.tree}, for each ${n}\in\omega,$ we define ${m}({p},{n})$ to be the node in $\scope_{\mathbf{H}}({p})$ such that
   $\height_{\mathbf{H}}\big({m}({p},{n})\big)=2{n}+1.$
   Using (g3) we have
   \begin{equation}\label{*e1}
      \forall{n}\,{\in}\,\omega\ \big[{m}({p},{n})\in\scope_{\mathbf{F}}({p})\big].
   \end{equation}
   Next, using (g2), we can define $$
   {f}_{{p}}({n})\coloneq\height_{\mathbf{F}}\big({m}({p},{n})\big)
   \quad\text{ for every }\;{n}\in\omega;
   $$
   then ${f}_{{p}}\colon\omega\to\omega$ by (a) of Corollary~\ref{cor.scope.in.Baire.f.tree}.
   If ${n}''>{n}',$ then ${m}({p},{n}'')>_{\mathbf{H}}{m}({p},{n}')$ (because $\scope_{\mathbf{H}}({p})$ is a chain in $\mathbf{H}$ by (c) of Corollary~\ref{cor.scope.in.Baire.f.tree}), so ${m}({p},{n}'')>_{\mathbf{F}}{m}({p},{n}')$ by (g2) and (b) of Lemma~\ref{prop.hybr}. This implies that ${f}_{{p}}$ is strictly increasing.
   Now, using (f4) and (g4), for every ${n}\in\omega,$ we have $$
   {m}({p},{n})\nin\big\{{0}_{\mathbf{G}({i})}:{i}\in{|}{A}{|}\big\},
   \quad\text{ so }\quad
   \sons_{\mathbf{H}}\big({m}({p},{n})\big)=\sons_{\mathbf{F}}\big({m}({p},{n})\big)
   $$
   by (d4) and (a) of Proposition~\ref{prop.hybr}. Then (g3) and (g1) imply
   \begin{equation}\label{*e4}
      \shoot_{\mathbf{H}}\big({m}({p},{n})\big)\gg\shoot_{\mathbf{F}}\big({m}({p},{n})\big)
      \qquad\text{for all}\enskip{n}\in\omega.
   \end{equation}

   To complete the proof it remains to verify (\ding{169}); suppose that $$
   {U}\in\nbhds({p},{X}),\quad{n}\in\omega,
   \quad\text{and}\quad{f}_{{p}}({n})\in\rise_{\mathbf{F}}({p},{U}).
   $$
   The last formula means that ${f}_{{p}}({n})=\height_{\mathbf{F}}({v})$ for some ${v}\in\scope_{\mathbf{F}}({p})$ such that
   $\shoot_{\mathbf{F}}({v})\gg\{{U}\}.$
   Then ${v}={m}({p},{n})$ by (d) of Corollary~\ref{cor.scope.in.Baire.f.tree}, by (\ref{*e1}), and by definition of  ${f}_{{p}}({n}).$ It follows that $$
   \shoot_{\mathbf{F}}\big({m}({p},{n})\big)\gg\{{U}\},
   \quad\text{ so }\quad
   \shoot_{\mathbf{H}}\big({m}({p},{n})\big)\gg\{{U}\}
   $$
   by (\ref{*e4}). Then (g3) implies $\shoot_{\mathbf{H}}\big({m}({p},{n})\big)\gg\{{U}\setminus{A}\},$ therefore
   $$
   2{n}+1\;=\;\height_{\mathbf{H}}\big({m}({p},{n})\big)
   \;\in\;\rise_{\mathbf{H}}({p},{U}\setminus{A})
   $$
   by definition of $\rise_{\mathbf{H}}({p},{U}\setminus{A}).$
\end{proof}

\section{New examples of spaces with a $\pi\!$-tree}
\label{sect.examples}

Recall that $\mathcal{N}$ is the Baire space, $\mathcal{R}_{\scriptscriptstyle\mathcal{S}}$ is the Sorgenfrey line, and $\mathcal{I}_{\scriptscriptstyle\mathcal{S}}\coloneq
\mathcal{R}_{\scriptscriptstyle\mathcal{S}}\setminus\mathbb{Q}$ is the irrational Sorgenfrey line.

\begin{lem}\label{lem.pi.tree.for.N.and.Sorg.l}\mbox{ }

   \begin{itemize}
   \item[\textup{(a)}]
      $\mathsurround=0pt
      \mathcal{N}\,$ has a $\pi\!$-tree $\mathbf{F}\hspace{-1pt}$ such that $\cofin\omega\gg\riseee_{\mathbf{F}}(\mathcal{N}).$
   \item[\textup{(b)}]
      $\mathsurround=0pt
      \mathcal{R}_{\scriptscriptstyle\mathcal{S}}\,$ has a $\pi\!$-tree $\mathbf{G}$ such that $\cofin\omega\gg\riseee_{\mathbf{G}}(\mathcal{R}_{\scriptscriptstyle\mathcal{S}}).$
   \item[\textup{(c)}]
      If ${X}\subseteq\mathcal{R}_{\scriptscriptstyle\mathcal{S}}$ and $\mathcal{R}_{\scriptscriptstyle\mathcal{S}}\setminus{X}\hspace{-1pt}$ is at most countable,

      then ${X}$ has a $\pi\!$-tree $\mathbf{H}\hspace{-1pt}$ such that $\cofin\{2{n}\,{+}\,1:{n}\in\omega\}\gg\riseee_{\mathbf{H}}({X}).$
   \end{itemize}
\end{lem}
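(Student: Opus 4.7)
The plan is to take the three parts in order: (a) reduces to Example~\ref{exmpl.rise.in.S}, (b) needs a direct construction of a Souslin scheme of half-open intervals whose lengths tend to~$0,$ and (c) follows at once from (b) via Corollary~\ref{cor.pi.tree.for.X-A}.

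For (a), take $\mathbf{F}\coloneq\mathbf{S},$ which is a $\pi\!$-tree on $\mathcal{N}$ by Lemma~\ref{lem.pi.and.B.f.trees.vs.S}(b). Given any nonempty $R=\rise_{\mathbf{S}}({p},{U})\in\riseee_{\mathbf{S}}(\mathcal{N}),$ pick ${n}\in\omega$ with $\mathbf{S}_{{p}{\upharpoonright}{n}}\subseteq{U}$ by Lemma~\ref{lem.pi.and.B.f.trees.vs.S}(a). Since $\rise$ is monotone in its second argument (immediate from the definition of $\pi\!$-refinement), Example~\ref{exmpl.rise.in.S} then gives $R\supseteq\rise_{\mathbf{S}}({p},\mathbf{S}_{{p}{\upharpoonright}{n}})=\omega\setminus{n},$ and $\omega\setminus{n}$ is a nonempty member of $\cofin\omega.$

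For (b), fix a bijection $\sigma\colon\omega\to\mathbb{Z}$ and recursively build a foliage tree $\mathbf{G}$ with $\skeleton\mathbf{G}\coloneq({}^{{<}\omega}\omega,\subset),$ $\mathbf{G}_{\langle\rangle}\coloneq\mathbb{R},$ $\mathbf{G}_{\langle{k}\rangle}\coloneq[\sigma({k}),\sigma({k})+1)$ for each ${k}\in\omega,$ and, at every node ${v}$ of length $\geqslant{1}$ with $\mathbf{G}_{{v}}=[a_{{v}},b_{{v}}),$ choose a strictly increasing sequence $a_{{v}}=c^{{v}}_{{0}}<c^{{v}}_{{1}}<\ldots$ with $\lim_{{i}\to\infty}c^{{v}}_{{i}}=b_{{v}}$ and $c^{{v}}_{{i}+{1}}-c^{{v}}_{{i}}\leqslant\tfrac{1}{2}(b_{{v}}-a_{{v}})$ for every ${i},$ declaring the leaf at the ${i}$-th son of ${v}$ to be $[c^{{v}}_{{i}},c^{{v}}_{{i}+{1}}).$ Every leaf is open in $\mathcal{R}_{\scriptscriptstyle\mathcal{S}},$ the splits are locally strict by construction, and leaf lengths halve at each step so each branch's intersection is a singleton; hence $\mathbf{G}$ is a Baire foliage tree on $\mathcal{R}_{\scriptscriptstyle\mathcal{S}}.$ To establish both that $\mathbf{G}$ grows into $\mathcal{R}_{\scriptscriptstyle\mathcal{S}}$ and that $\cofin\omega\gg\riseee_{\mathbf{G}}(\mathcal{R}_{\scriptscriptstyle\mathcal{S}})$ --- the former following from the latter by Lemma~\ref{rem.rise.vs.grow.into}(a) --- fix ${p}\in\mathcal{R}_{\scriptscriptstyle\mathcal{S}},$ ${U}\in\nbhds({p},\mathcal{R}_{\scriptscriptstyle\mathcal{S}}),$ and $\varepsilon>0$ with $[{p},{p}+\varepsilon)\subseteq{U}.$ Pick ${N}$ large enough that every level-${N}$ leaf has length less than $\varepsilon.$ For each ${n}\geqslant{N},$ let ${v}_{{n}}$ be the unique level-${n}$ node in $\scope_{\mathbf{G}}({p})$ (Corollary~\ref{cor.scope.in.Baire.f.tree}(d)); then $b_{{v}_{{n}}}<{p}+\varepsilon,$ and if ${k}$ is the index of the son of ${v}_{{n}}$ containing ${p},$ every son with index $\geqslant{k}+{1}$ lies in $({p},{p}+\varepsilon)\subseteq{U}.$ Hence $\shoot_{\mathbf{G}}({v}_{{n}})\gg\{{U}\},$ so $\omega\setminus{N}\subseteq\rise_{\mathbf{G}}({p},{U}),$ and $\rise_{\mathbf{G}}({p},{U})$ is cofinite in $\omega,$ as required.

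Part (c) then follows by applying Corollary~\ref{cor.pi.tree.for.X-A} to the $\pi\!$-tree from (b) together with the at most countable set ${A}\coloneq\mathcal{R}_{\scriptscriptstyle\mathcal{S}}\setminus{X}.$ The main obstacle is in part (b), where one must reconcile the right-accumulation of sons in a half-open partition with the right-openness of Sorgenfrey neighborhoods around ${p};$ forcing leaf lengths to tend to ${0}$ sidesteps this, because then $b_{{v}_{{n}}}$ itself eventually sits within $\varepsilon$ of ${p}.$
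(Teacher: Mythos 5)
Your argument is correct. For parts (a) and (c) you follow exactly the paper's route: (a) is Lemma~\ref{lem.pi.and.B.f.trees.vs.S}(b) plus Example~\ref{exmpl.rise.in.S} (your monotonicity remark about $\rise$ in its second argument is the small glue step the paper leaves implicit), and (c) is a direct application of Corollary~\ref{cor.pi.tree.for.X-A} with ${A}=\mathcal{R}_{\scriptscriptstyle\mathcal{S}}\setminus{X}.$ The difference is in (b): the paper does not prove it at all but defers to the proof of Lemma~3.6 in~\cite{MPatr}, whereas you give a self-contained construction --- a Lusin scheme of half-open intervals rooted at the partition $\mathbb{R}=\bigsqcup_{m\in\mathbb{Z}}[m,m+1)$ with lengths at least halving at each level. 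Your construction does work, and the two points that need care are indeed handled: strict branches hold because each son's right endpoint $c^{{v}}_{{i}+{1}}$ is \emph{strictly} below $b_{{v}}$, so along a branch $\sup_{{n}}a_{{n}}<b_{{n}}$ for every ${n}$ and the shrinking lengths force the intersection to be the singleton $\{\sup_{{n}}a_{{n}}\}$; and the cofinite-tail estimate $\bigcup_{{i}\geqslant{k}+{1}}[c^{{v}_{{n}}}_{{i}},c^{{v}_{{n}}}_{{i}+{1}})=[c^{{v}_{{n}}}_{{k}+{1}},b_{{v}_{{n}}})\subseteq({p},{p}+\varepsilon)$ is exactly what $\shoot_{\mathbf{G}}({v}_{{n}})\gg\{{U}\}$ requires, giving $\omega\setminus{N}\subseteq\rise_{\mathbf{G}}({p},{U})$ and hence both $\cofin\omega\gg\riseee_{\mathbf{G}}(\mathcal{R}_{\scriptscriptstyle\mathcal{S}})$ and, via Lemma~\ref{rem.rise.vs.grow.into}(a), the growth condition. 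What your version buys is that the lemma no longer rests on an external proof; what the paper's version buys is brevity, since the interval scheme is essentially the one already built in~\cite{MPatr}.
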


\begin{proof}
   Part (a) follows from (b) of Lemma~\ref{lem.pi.and.B.f.trees.vs.S} and Example~\ref{exmpl.rise.in.S}; part (b) can be derived from the proof of Lemma~3.6 in~\cite{MPatr}; part (c) follows from part (b) and Corollary~\ref{cor.pi.tree.for.X-A}.
\end{proof}

Using the above lemma and Theorem~\ref{th.2} we obtain the following statement:

\begin{sle}\label{cor.rise.in.N.and.sorg.line}
   Suppose that ${1}\leqslant{|}{A}{|}\leqslant\omega$ and for each  $\alpha\in{A},$
   $$
   \text{either}\qquad{X}_{\alpha}=\mathcal{N}
   \qquad\text{or}\qquad
   {X}_{\alpha}\subseteq\mathcal{R}_{\scriptscriptstyle\mathcal{S}}
   \ \text{ with }\ {|}\mathcal{R}_{\scriptscriptstyle\mathcal{S}}\setminus{X}_{\alpha}{|}\leqslant\omega.
   $$
   Then the product $\prod_{\alpha\in{A}}{X}_{\alpha}$ has a $\pi\!$-tree.
   \hfill$\qed$
\end{sle}

\begin{sle}\label{cor.pi.tree.for.powers.of.(irr)Sor.l}\mbox{ }

   \begin{itemize}
   \item[\textup{(a)}]
      $\mathsurround=0pt
      {\mathcal{R}_{\scriptscriptstyle\mathcal{S}}}^{{n}}\hspace{-1pt}$ and $\hspace{1pt}{\mathcal{I}_{\scriptscriptstyle\mathcal{S}}}^{{n}}\hspace{-1pt}$ have a $\pi\!$-tree for all ${n}\in\omega\setminus\{{0}\}.$
   \item[\textup{(b)}]
      $\mathsurround=0pt
      {\mathcal{R}_{\scriptscriptstyle\mathcal{S}}}^{\omega}\hspace{-1pt}$ and $\hspace{1pt}{\mathcal{I}_{\scriptscriptstyle\mathcal{S}}}^{\omega}\hspace{-1pt}$ have a $\pi\!$-tree.
      \hfill$\qed$
   \end{itemize}
\end{sle}
\noindent
Note that if ${X}\subseteq\mathcal{N}$ with
${|}\hspace{1pt}\mathcal{N}\setminus{X}{|}\leqslant\omega,$
then ${X}\hspace{-1pt}$ is homeomorphic to $\mathcal{N}$ (this can be easily derived from the Alexandrov-Urysohn characterization of the Baire space and from the characterization of its Polish subspaces --- see Theo\-rems~3.11 and~7.7 in~\cite{kech}).
Notice also that $\mathcal{N}^{{n}}$ is homeomorphic to $\mathcal{N}$ for all ${n}\in\omega\setminus\{{0}\}$ and $\mathcal{N}^\omega$ is also homeomorphic to $\mathcal{N}.$

%

\begin{sle}\label{cor.pi.tree.for.X*N.X*Sor.l}
   If a space ${X}$ has a $\pi\!$-tree,
   then ${X}\times\mathcal{N},$ ${X}\times\mathcal{R}_{\scriptscriptstyle\mathcal{S}},$ and ${X}\times{\mathcal{R}_{\scriptscriptstyle\mathcal{S}}}^{\omega}\hspace{-1.5pt}$ also have a $\pi\!$-tree.
\end{sle}

\begin{proof} This statement follows from Corollary~\ref{cor.Y*product} and Lemma~\ref{lem.pi.tree.for.N.and.Sorg.l}.
\end{proof}

\section{Appendix. The foliage hybrid operation}
\label{f.h.o.}

In the proofs of Lemma~\ref{lem.perestrojka.LPB.po.sdvigu.fil'tra} and Proposition~\ref{prop.f.tree.for.X-A} we employ the foliage hybrid operation, which was introduced in \cite{my.paper}. For completeness of exposition we list here definitions and results that we use.
The definition of graft, which we give below, slightly differs from the definition of graft in \cite{my.paper}, but these two definitions are easily seen to be equivalent. The same can be said about our definition of $\hybr(\mathcal{T}\hspace{-1pt},\gamma),$ see details in \cite[Remark~20]{my.paper}.
To ease comprehension of notions from Definition~\ref{def.graft}, you can look at pictures that illustrate this definition in~\cite{patr.slides}.

\begin{notation}\mbox{ }

   \begin{itemize}
   \item [\ding{46}]
      $\mathsurround=0pt
      \forall{x}\,{\neq}\,{y}\,{\in}\,{A}\ \varphi({x},{y})
      \quad{\colon}{\longleftrightarrow}\quad
      \forall{x},{y}\,{\in}\,{A}\ \big[{x}\neq {y}\to\varphi({x},{y})\big];$
   \item [\ding{46}]
      $\mathsurround=0pt
      {x}\parallel_{\hspace{-1pt}\mathcal{P}} {y}
      \quad{\colon}{\longleftrightarrow}\quad
      {x}\nleqslant_\mathcal{P} {y}\enskip\mathsf{and}\enskip{x}\ngtr_\mathcal{P}{y}.$
   \end{itemize}
\end{notation}

\begin{deff}[{Definitions~15, 17, 19, 25--27 and Remark~20 in \cite{my.paper}}]\label{def.graft}
   Suppose that
   $\mathcal{T},\hspace{1pt}\mathcal{G}$ are trees and $\mathbf{F},\mathbf{G}$ are nonincreasing foliage trees.

   \begin{itemize}
   \item [\ding{46}]
      $\mathsurround=0pt
      \mathcal{G}\,$ is a \textbf{graft} for $\mathcal{T}
      \quad{\colon}{\longleftrightarrow}\quad$
      \begin{itemize}
      \item[\ding{226}]
         $\mathsurround=0pt
         {|}\nodes\mathcal{G}{|}>1,$
      \item[\ding{226}]
         $\mathsurround=0pt
         \mathcal{G}\,$ has the least node,
      \item[\ding{226}]
         $\mathsurround=0pt
         \nodes\mathcal{G}\cap\nodes\mathcal{T}\:=\;
         \{{0}_\mathcal{G}\}\cup\maxel\mathcal{G},$\quad and
      \item[\ding{226}]
         $\mathsurround=0pt
         \forall{x},{y}\in\nodes\mathcal{G}\cap\nodes\mathcal{T}
         \ [{x}<_\mathcal{G} {y}\leftrightarrow{x}<_{\mathcal{T}} {y}].$
      \end{itemize}
   \item [\ding{46}]
      If $\mathcal{G}$ is a graft for $\mathcal{T}\hspace{-1pt},$ then\textup{:}
      \begin{itemize}
      \item[\ding{226}]
         $\mathsurround=0pt
         \impl\mathcal{G}\;\coloneq\;\nodes\mathcal{G}\setminus
         \big(\{{0}_\mathcal{G}\}\cup\maxel\mathcal{G}\big);$
      \item[\ding{226}]
         $\mathsurround=0pt
         \expl(\mathcal{T}\hspace{-1pt},\mathcal{G})\;\coloneq\; ({0}_\mathcal{G}){\downspoon}_{\hspace{-0.5pt}\mathcal{T}}
         \setminus(\maxel\mathcal{G}){\downfootline}_{\hspace{-0.5pt}\mathcal{T}}.$
         $\vphantom{\bar{[]}}$
      \end{itemize}
   \item [\ding{46}]
      $\mathsurround=0pt
      \gamma\,$ is a \textbf{consistent} family of grafts for $\mathcal{T}
      \quad{\colon}{\longleftrightarrow}\quad$
      \begin{itemize}
      \item[\ding{226}]
         $\mathsurround=0pt
         \forall\mathcal{G}\,{\in}\,\gamma
         \ [\:\mathcal{G}$ is a graft for $\mathcal{T}\:],$
      \item[\ding{226}]
         $\mathsurround=0pt
         \forall\mathcal{D}\,{\neq}\,\mathcal{E}\,{\in}\,\gamma
         \ [\:\impl\mathcal{D}\cap\impl\mathcal{E}=\varnothing\:],$\enskip and
         $\vphantom{\overline{[]}}$
      \item[\ding{226}]
         $\mathsurround=0pt
         \forall\mathcal{D}\,{\neq}\,\mathcal{E}\,{\in}\,\gamma
         \ \big[\
         {0}_\mathcal{D}\parallel_{\hspace{-1pt}\mathcal{T}}{0}_\mathcal{E}
         \enskip\textsf{ or }\enskip
         {0}_\mathcal{D}\in(\maxel\mathcal{E}) {\downfootline}_{\hspace{-0.5pt}\mathcal{T}}
         \enskip\textsf{ or }\enskip
         {0}_\mathcal{E}\in(\maxel\mathcal{D}){\downfootline}_{\hspace{-0.5pt}\mathcal{T}}
         \ \big].$
         $\vphantom{\widetilde{[]}}$
      \end{itemize}
   \item [\ding{46}]
      If $\gamma$ is a consistent family of grafts for $\mathcal{T}\hspace{-1pt},$ then\textup{:}
      \begin{itemize}
      \item[\ding{226}]
         $\displaystyle\mathsurround=0pt
         \supp(\mathcal{T}\hspace{-1pt},\gamma)\;\coloneq\;
         \nodes\mathcal{T}\setminus\bigcup_{\mathcal{G}\in\gamma}\expl(\mathcal{T}\hspace{-1pt},\mathcal{G});
         \vphantom{\hat{\bigcup}}$
         \vspace{-1pt}
      \item[\ding{226}]
         $\mathsurround=0pt
         \hybr(\mathcal{T}\hspace{-1pt},\gamma)\;\coloneq\;$
         the pair $({H},{<})$ (actually, a tree) such that
         \begin{itemize}
         \item[\ding{51}]
            $\displaystyle\mathsurround=0pt
            \ {H}\;\coloneq\ \supp(\mathcal{T}\hspace{-1pt},\gamma)\cup
            \bigcup_{\mathcal{G}\in\gamma}\impl\mathcal{G}\enskip
            \vphantom{\bar{\hat{\bigcup}}}$ and
            \vspace{-2pt}
         \item[\ding{51}]
            $\mathsurround=0pt
            \ \,{<}\ \coloneq\ $ the transitive closure of relation
            $\displaystyle({<}_{\mathcal{T}}\cup\bigcup_{\mathcal{G}\in\gamma}\!{<}_\mathcal{G})
            \,\cap\,({H}\times {H}).$
            \vspace{-3pt}
         \end{itemize}
      \end{itemize}
   \item [\ding{46}]
      $\mathsurround=0pt
      \mathbf{G}\,$ is a \textbf{foliage graft} for $\mathbf{F}
      \quad{\colon}{\longleftrightarrow}\quad$
      \begin{itemize}
      \item[\ding{226}]
         $\mathsurround=0pt
         \mathbf{G}\,$ is nonincreasing,
      \item[\ding{226}]
         $\mathsurround=0pt
         \skeleton\mathbf{G}\,$ is a graft for $\skeleton\mathbf{F},$
      \item[\ding{226}]
         $\mathsurround=0pt
         \mathbf{G}_{{0}_{\mathbf{G}}}
         \subseteq\mathbf{F}_{\hspace{-1.2pt}{0}_{\mathbf{G}}},\enskip$ and
      \item[\ding{226}]
         $\mathsurround=0pt
         \forall{m}\in\maxel\mathbf{G}
         \ [\mathbf{G}_{{m}}=\mathbf{F}_{\hspace{-1.2pt}{m}}
         ].$
      \end{itemize}
   \item [\ding{46}]
      If $\mathbf{G}$ is a foliage graft for $\mathbf{F},$ then
      \begin{itemize}
      \item[\ding{226}]
         $\mathsurround=0pt
         \cut(\mathbf{F},\mathbf{G})\coloneq   \mathbf{F}_{\hspace{-1.2pt}{0}_{\mathbf{G}}}\setminus   \mathbf{G}_{{0}_{\mathbf{G}}}.$
      \end{itemize}
   \item [\ding{46}]
      $\mathsurround=0pt
      \varphi\,$ is a \textbf{consistent} family of foliage grafts for $\mathbf{F}
      \quad{\colon}{\longleftrightarrow}\quad$
      \begin{itemize}
      \item[\ding{226}]
         $\mathsurround=0pt
         \forall\mathbf{G}\,{\in}\,\varphi
         \ [\mathbf{G}$ is a foliage graft for $\mathbf{F}],$
      \item[\ding{226}]
         $\mathsurround=0pt
         \forall\mathbf{D}\,{\neq}\,\mathbf{E}\,{\in}\,\varphi
         \ [\skeleton\mathbf{D}\neq\skeleton\mathbf{E}],$\enskip and
         $\vphantom{\overline{[]}}$
      \item[\ding{226}]
         $\mathsurround=0pt
         \{\skeleton\mathbf{G}:\mathbf{G}\in\varphi\}$ is a consistent family of grafts for $\skeleton\mathbf{F}.$
         $\vphantom{\overline{[]}}$
      \end{itemize}
   \item [\ding{46}]
      If $\varphi$ is a consistent family of foliage grafts for $\mathbf{F},$ then\textup{:}
      \begin{itemize}
      \item[\ding{226}]
         $\displaystyle\mathsurround=0pt
         \loss(\mathbf{F},\varphi)\;\coloneq\;\bigcup_{\mathbf{G}\in\varphi}
         \cut(\mathbf{F},\mathbf{G});\vphantom{\hat{\bigcup}}$
         \vspace{-2pt}
      \item[\ding{226}]
         $\mathsurround=0pt
         \fhybr(\mathbf{F},\varphi)$ $\;\coloneq\;$
         the \textbf{foliage hybrid} of $\mathbf{F}\hspace{-1pt}$ and $\varphi$ $\;\coloneq\;$
         the foliage tree $\mathbf{H}\hspace{-1pt}$ such that
         \begin{itemize}
         \item[\ding{51}]
            $\mathsurround=0pt
            \ \skeleton\mathbf{H}\;\coloneq\;
            \hybr\big(\skeleton\mathbf{F},\{\skeleton\mathbf{G}:\mathbf{G}\in\varphi\}\big)
            \enskip\vphantom{\overline{\hat{\bigcup}}}$
            and
         \item[\ding{51}]
            $\mathsurround=0pt
            \;\mathbf{H}_{{x}}\;\coloneq\;
            \begin{cases}
               \mathbf{G}_{{x}}\setminus\loss(\mathbf{F},\varphi),
               &\text
               {if$\enskip{x}\in\impl\mathbf{G}\enskip$for some$\enskip\mathbf{G}\in\varphi;$}
            \\
               \mathbf{F}_{\hspace{-1.2pt}{x}}\setminus\loss(\mathbf{F},\varphi)
               \vphantom{\widetilde{\big\langle\rangle}},
               &\text
               {otherwise.}
            \end{cases}
            $
         \end{itemize}
      \end{itemize}
   \end{itemize}
\end{deff}

\begin{lem}[{Lemma~21 and Proposition~23 in \cite{my.paper}}]\label{prop.hybr}
   Suppose that $\gamma$ is a consistent family of grafts for a tree $\mathcal{T}\hspace{-1pt},$
   $\mathcal{H}=\hybr(\mathcal{T}\hspace{-1pt},\gamma),$ and $\mathcal{G}\in\gamma.$

   \begin{itemize}
   \item [\textup{(a)}]
      $\mathsurround=0pt \nodes\mathcal{G}\subseteq\nodes\mathcal{H}\enskip$ and $\enskip
      \forall{x},{y}\,{\in}\nodes\mathcal{G}
      \ [{x}<_\mathcal{H} {y}\leftrightarrow{x}<_{\mathcal{G}}{y}].$
   \item [\textup{(b)}]
      $\mathsurround=0pt
      \supp(\mathcal{T}\hspace{-1pt},\gamma)=\nodes\mathcal{H}\cap\nodes\mathcal{T}
      \enskip$ and $\enskip\forall{x},{y}\,{\in}\,\supp(\mathcal{T}\hspace{-1pt},\gamma)
      \ [{x}<_{\mathcal{H}} {y}\leftrightarrow{x}<_{\mathcal{T}} {y}
      ].$
   \item [\textup{(c)}]
      For each ${x}\in\nodes\mathcal{H},$

      $\mathsurround=0pt
      \sons_{\mathcal{H}}({x})=
      \begin{cases}
         \sons_{\mathcal{G}}({x}),
         &\text
         {if$\enskip{x}\in\{{0}_\mathcal{G}\}\cup\impl\mathcal{G}\enskip$for some$\enskip\mathcal{G}\in\gamma;$
          }
      \\
         \sons_{\mathcal{T}}({x}),
         &\text
         {otherwise \textup{\big(}i.e., when$\enskip{x}\in\supp(\mathcal{T}\hspace{-1pt},\gamma)
         \setminus
         \{{0}_\mathcal{G}:\mathcal{G}\in\gamma\}\textup{\big)}.\vphantom{\widetilde{\big\langle\rangle}}$}
      \end{cases}
      $
   \end{itemize}
\end{lem}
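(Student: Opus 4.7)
The plan is to unpack the definitions carefully and prove the three parts in an order that lets each build on the previous. I would begin with the node-set claims: $\nodes\mathcal{G}\subseteq\nodes\mathcal{H}$ in (a), and $\supp(\mathcal{T},\gamma)=\nodes\mathcal{H}\cap\nodes\mathcal{T}$ in (b). Decomposing $\nodes\mathcal{G}=\{0_\mathcal{G}\}\cup\impl\mathcal{G}\cup\maxel\mathcal{G}$, the inclusion $\impl\mathcal{G}\subseteq\nodes\mathcal{H}$ is immediate from the definition of $\hybr(\mathcal{T},\gamma)$, while for the other two pieces (which lie in $\nodes\mathcal{T}$ by the definition of graft) I need to check that $0_\mathcal{G}$ and each $m\in\maxel\mathcal{G}$ lie in $\supp(\mathcal{T},\gamma)$, i.e.\ avoid every $\expl(\mathcal{T},\mathcal{D})$. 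When $\mathcal{D}=\mathcal{G}$ this is clear; when $\mathcal{D}\neq\mathcal{G}$ I would apply the three-alternative consistency condition to the pair $(0_\mathcal{G},0_\mathcal{D})$: each alternative either breaks $0_\mathcal{G}>_\mathcal{T} 0_\mathcal{D}$ or places $0_\mathcal{G}$ above some element of $\maxel\mathcal{D}$, and the same analysis applied to $m>_\mathcal{T} 0_\mathcal{G}$ settles the $\maxel\mathcal{G}$ case. Once this is in place, the equality in (b) follows because any $x\in\nodes\mathcal{H}\cap\nodes\mathcal{T}$ lies in $\supp(\mathcal{T},\gamma)\cup\bigcup_{\mathcal{G}\in\gamma}\impl\mathcal{G}$, but the graft condition $\nodes\mathcal{G}\cap\nodes\mathcal{T}=\{0_\mathcal{G}\}\cup\maxel\mathcal{G}$ rules out $\impl\mathcal{G}\cap\nodes\mathcal{T}\neq\varnothing$.

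The real work is the order equivalences. The $\Leftarrow$ directions in both (a) and (b) come for free from the transitive-closure definition of $<_\mathcal{H}$. For $\Rightarrow$, I would prove by induction on $n$ the following shortening lemma: if $x=x_0,x_1,\dots,x_n=y$ is a chain in $\nodes\mathcal{H}$ with each step given by $<_\mathcal{T}$ or some $<_\mathcal{D}$, and if both endpoints lie in $\nodes\mathcal{G}$ (resp.\ in $\supp(\mathcal{T},\gamma)$), then $x<_\mathcal{G} y$ (resp.\ $x<_\mathcal{T} y$). The crux is that two consecutive steps of distinct types must share an intermediate node which, lying in $\nodes\mathcal{H}$ and simultaneously in the node sets of two orders, is forced into an intersection such as $\nodes\mathcal{D}\cap\nodes\mathcal{E}$ or $\nodes\mathcal{D}\cap\nodes\mathcal{T}$; the disjointness of implants together with the three-alternative root disposition then lets me collapse the two steps into one of a single type. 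This case analysis, which I expect to be the main obstacle, is delicate but each branch is handled by exactly one of the three consistency alternatives, essentially because those alternatives were tailored to make the shortening work.

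Part (c) falls out once (a) and (b) are established. Given $x\in\nodes\mathcal{H}$, I would split on whether $x\in\{0_\mathcal{G}\}\cup\impl\mathcal{G}$ for some $\mathcal{G}\in\gamma$, or $x\in\supp(\mathcal{T},\gamma)\setminus\{0_\mathcal{D}:\mathcal{D}\in\gamma\}$. In the first case, the $\mathcal{G}$-sons of $x$ sit in $\nodes\mathcal{H}$ by (a), and the shortening lemma shows nothing lies strictly between $x$ and any such son in $\mathcal{H}$; any would-be $\mathcal{T}$-son of $x$ inside $\nodes\mathcal{H}$ would sit above some $\maxel\mathcal{G}$-node below it in $\mathcal{T}$ (by the graft definition), hence cannot be minimal. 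In the second case $x$ is the base of no graft, so no $<_\mathcal{D}$-relation issues from $x$, and the candidates for $\mathcal{H}$-sons are exactly the $\mathcal{T}$-sons of $x$; a $\mathcal{T}$-son lying in some $\expl(\mathcal{T},\mathcal{D})$ gets replaced in $\mathcal{H}$ by $0_\mathcal{D}$, which by the graft conditions is itself a $\mathcal{T}$-son of $x$. With the shortening lemma in hand this case analysis is routine.
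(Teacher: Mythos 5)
The paper does not actually prove this lemma --- it is imported verbatim from reference \cite{my.paper} (``Lemma~21 and Proposition~23'') and used as a black box --- so there is no internal proof to compare yours against; I can only check your plan against the definitions in Section~\ref{f.h.o.}. Your overall architecture is correct and is essentially the only sensible one: first the node-set identities (your use of the three-alternative consistency clause is right, though note that in the $\maxel\mathcal{G}$ case the ``parallel roots'' alternative is killed by the tree axiom that the $\mathcal{T}\!$-predecessors of a node are linearly ordered, which is worth saying explicitly), then the order equivalences by induction on the length of a chain witnessing the transitive closure. The key structural fact you correctly isolate is that a junction between steps of different types must lie in $\nodes\mathcal{D}\cap\nodes\mathcal{T}=\{{0}_{\mathcal{D}}\}\cup\maxel\mathcal{D}$ (because $\impl\mathcal{D}\cap\nodes\mathcal{T}=\varnothing$ and implants of distinct grafts are disjoint), so each maximal run collapses to a single relation and boundary-to-boundary runs convert between $<_{\mathcal{D}}$ and $<_{\mathcal{T}}$ by the order-agreement clause in the definition of graft. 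That part of the plan will go through.

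The one step that fails as written is in part (c), second case. For ${x}\in\supp(\mathcal{T},\gamma)\setminus\{{0}_{\mathcal{D}}:\mathcal{D}\in\gamma\}$ you assert that a $\mathcal{T}\!$-son ${s}$ of ${x}$ lying in some $\expl(\mathcal{T},\mathcal{D})$ ``gets replaced in $\mathcal{H}$ by ${0}_{\mathcal{D}}$, which \ldots{} is itself a $\mathcal{T}\!$-son of ${x}$.'' It cannot be: ${s}\in\expl(\mathcal{T},\mathcal{D})$ forces ${0}_{\mathcal{D}}<_{\mathcal{T}}{s}$, and since the $\mathcal{T}\!$-predecessors of ${s}$ are linearly ordered and nothing lies strictly $\mathcal{T}\!$-between ${x}$ and ${s}$, we get ${0}_{\mathcal{D}}\leqslant_{\mathcal{T}}{x}$, i.e.\ ${0}_{\mathcal{D}}$ sits at or below ${x}$, not above it. The correct resolution is that the situation is vacuous: ${0}_{\mathcal{D}}<_{\mathcal{T}}{x}$ (equality being excluded by hypothesis) together with ${x}\in\supp(\mathcal{T},\gamma)$ yields ${x}\geqslant_{\mathcal{T}}{m}$ for some ${m}\in\maxel\mathcal{D}$, hence ${s}>_{\mathcal{T}}{m}$ and ${s}\nin\expl(\mathcal{T},\mathcal{D})$ after all; so every $\mathcal{T}\!$-son of such an ${x}$ survives into $\nodes\mathcal{H}$. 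The ``replace an explanted node by ${0}_{\mathcal{D}}$'' idea is needed, but in the opposite inclusion: to show $\sons_{\mathcal{H}}({x})\subseteq\sons_{\mathcal{T}}({x})$ you must rule out a ${v}$ with ${x}<_{\mathcal{T}}{v}<_{\mathcal{T}}{s}$, and when such a ${v}$ has been explanted by some $\mathcal{D}$ it is ${0}_{\mathcal{D}}$ (which does lie in $\supp(\mathcal{T},\gamma)$ and strictly $\mathcal{H}$-between ${x}$ and ${s}$) that provides the contradiction. With this repair, and with the junction case analysis actually carried out, your proof is complete.
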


\begin{lem}[{Lemma~30 in \cite{my.paper}}]\label{lem.when.f.hybr.is.pi.tree}
    Suppose that $\mathbf{F}\hspace{-1pt}$ is a Baire foliage tree on a space ${X}\hspace{-1pt}$ and $\varphi$ is a consistent family of foliage grafts for $\mathbf{F}\hspace{-1pt}$ such that every $\mathbf{G}$ in $\varphi$ is $\omega\!$-branching, locally strict, open in ${X},$ has bounded chains, and has $\height\mathbf{G}\leqslant\omega.$
    Then the foliage hybrid of $\mathbf{F}\hspace{-1pt}$ and $\varphi$ is a Baire foliage tree on ${X}\setminus\loss(\mathbf{F},\varphi).$
\end{lem}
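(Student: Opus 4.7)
Write $\mathbf{H}\coloneq\fhybr(\mathbf{F},\varphi)$ and $Y\coloneq{X}\setminus\loss(\mathbf{F},\varphi).$ The plan is to verify, in turn, the five defining properties of a Baire foliage tree on $Y$: the identity $\mathbf{H}_{{0}_{\mathbf{H}}}=Y,$ openness of every leaf of $\mathbf{H}$ in $Y,$ the skeleton of $\mathbf{H}$ being isomorphic to $({}^{{<}\omega}\omega,{\subset}),$ local strictness, and strict branches. The whole argument leans on Proposition~\ref{prop.hybr}, which pins down $\nodes\mathbf{H}$ and $\sons_{\mathbf{H}}$ precisely, and on the observation that for every $x\in\nodes\mathbf{H},$ $\mathbf{H}_{x}$ equals $\mathbf{G}_{x}\cap Y$ (if $x\in\impl\mathbf{G}$) or $\mathbf{F}_{\hspace{-1.2pt}x}\cap Y$ (otherwise).

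The quick verifications come first. Since no implant contains the root or a max of its graft, ${0}_{\mathbf{F}}\in\supp(\mathbf{F},\varphi)$ is the least node of $\mathbf{H},$ so $\mathbf{H}_{{0}_{\mathbf{H}}}=\mathbf{F}_{\hspace{-1.2pt}{0}_{\mathbf{F}}}\setminus\loss(\mathbf{F},\varphi)=Y.$ Every leaf $\mathbf{H}_{{x}}$ has the form $Z\cap Y$ where $Z$ is $\mathbf{G}_{{x}}$ or $\mathbf{F}_{\hspace{-1.2pt}{x}};$ both are open in ${X}$ by the hypotheses on $\mathbf{F}\hspace{-1pt}$ and on each $\mathbf{G},$ so $\mathbf{H}_{{x}}$ is open in $Y.$ By Proposition~\ref{prop.hybr}(c), every non-maximal node of $\mathbf{H}$ inherits its sons either from some $\mathbf{G}\in\varphi$ or from $\mathbf{F}\hspace{-1pt},$ both of which are $\omega$-branching, so $\mathbf{H}$ is $\omega$-branching. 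For the skeleton to be isomorphic to $({}^{{<}\omega}\omega,{\subset})$ I still need every node of $\mathbf{H}$ to have finite height; here I invoke the combination of bounded chains with $\height\mathbf{G}\leqslant\omega,$ which together force every chain in $\mathbf{G}$ to be finite. Consequently, between two consecutive $\mathbf{F}$-ancestors of a given node of $\mathbf{H},$ only finitely many implants from one graft can intervene, so $\height_{\mathbf{F}}({x})<\omega$ propagates to $\height_{\mathbf{H}}({x})<\omega.$ Local strictness then splits via Proposition~\ref{prop.hybr}(c): if ${x}\in\{{0}_{\mathbf{G}}\}\cup\impl\mathbf{G},$ the relation $\mathbf{G}_{{x}}\equiv\bigsqcup_{{s}\in\sons_{\mathbf{G}}({x})}\mathbf{G}_{{s}}$ transfers to $\mathbf{H}$ after intersecting with $Y$ (using $\mathbf{G}_{{s}}=\mathbf{F}_{\hspace{-1.2pt}{s}}$ for ${s}\in\maxel\mathbf{G}$ to identify $\mathbf{H}_{{s}}=\mathbf{G}_{{s}}\cap Y$ in either sub-case); if ${x}\in\supp(\mathbf{F},\varphi)$ is not a root of any graft, the analogous relation for $\mathbf{F}\hspace{-1pt}$ does the job.

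The main obstacle is strict branches. Given a branch $B$ in $\mathbf{H},$ set $B_{0}\coloneq B\cap\nodes\mathbf{F}.$ Finiteness of chains inside each graft shows that $B$ can contribute only finitely many nodes to the implants of any fixed $\mathbf{G},$ so $B_{0}$ is cofinal in $B;$ together with $B_{0}\subseteq\supp(\mathbf{F},\varphi)$ and Proposition~\ref{prop.hybr}(b), it follows that $B_{0}$ is itself a branch in $\mathbf{F}\hspace{-1pt}.$ Since $\mathbf{F}\hspace{-1pt}$ has strict branches, $\bigcap_{{b}\in B_{0}}\mathbf{F}_{\hspace{-1.2pt}{b}}=\{{p}\}$ for a unique ${p}\in{X},$ and cofinality reduces $\bigcap_{{b}\in B}\mathbf{H}_{{b}}$ to $\{{p}\}\cap Y.$ The final subtlety is confirming ${p}\in Y.$ For every $\mathbf{G}\in\varphi,$ either ${0}_{\mathbf{G}}\notin B_{0},$ so ${p}\notin\mathbf{F}_{\hspace{-1.2pt}{0}_{\mathbf{G}}}\supseteq\cut(\mathbf{F},\mathbf{G}),$ or ${0}_{\mathbf{G}}\in B_{0},$ in which case $B_{0}$ must exit through some ${m}\in\maxel\mathbf{G},$ yielding ${p}\in\mathbf{F}_{\hspace{-1.2pt}{m}}=\mathbf{G}_{{m}}\subseteq\mathbf{G}_{{0}_{\mathbf{G}}}$ by nonincreasingness, and again ${p}\notin\cut(\mathbf{F},\mathbf{G}).$ Therefore ${p}\notin\loss(\mathbf{F},\varphi),$ so ${p}\in Y$ and $\bigcap_{{b}\in B}\mathbf{H}_{{b}}=\{{p}\}$ is the required singleton.
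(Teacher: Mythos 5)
The paper does not actually prove this lemma here --- it is imported verbatim as Lemma~30 of \cite{my.paper} --- so there is no in-text proof to compare against; I can only assess your argument on its own terms, and it is essentially sound: the reduction of every property of a Baire foliage tree to Proposition~\ref{prop.hybr} plus the finiteness of chains in each graft (which you correctly extract from ``bounded chains'' together with $\height\mathbf{G}\leqslant\omega$) is the right strategy, and your treatment of strict branches, including the case analysis showing ${p}\nin\cut(\mathbf{F},\mathbf{G})$ for every graft, is correct. Two spots deserve a little more care than your sketch gives them. First, in the local-strictness step at ${x}={0}_{\mathbf{G}},$ the leaf of $\mathbf{H}$ is $\mathbf{F}_{\hspace{-1.2pt}{0}_{\mathbf{G}}}\cap{Y}$ (since ${0}_{\mathbf{G}}\nin\impl\mathbf{G}$), whereas the disjoint-union identity you are transferring is for $\mathbf{G}_{{0}_{\mathbf{G}}};$ these two intersections with ${Y}$ agree only because their difference is exactly $\cut(\mathbf{F},\mathbf{G})\subseteq\loss(\mathbf{F},\varphi),$ and that observation should be stated explicitly --- it is the whole reason the hybrid lives on ${X}\setminus\loss(\mathbf{F},\varphi)$ rather than on ${X}.$ Second, ${B}_{0}={B}\cap\nodes\mathbf{F}$ need not be a branch in $\mathbf{F}$: nodes of $\skeleton\mathbf{F}$ lying strictly between ${0}_{\mathbf{G}}$ and the max through which ${B}$ exits $\mathbf{G}$ may belong to $\expl(\mathbf{F},\mathbf{G})$ and hence be absent from $\nodes\mathbf{H}.$ What is true, and what suffices, is that ${B}_{0}$ is an infinite chain cofinal in the unique branch of $\mathbf{F}$ containing it, so that nonincreasingness of $\mathbf{F}$ still gives $\bigcap_{{b}\in{B}_{0}}\mathbf{F}_{\hspace{-1.2pt}{b}}=\{{p}\}.$ With those two repairs the proof goes through.
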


\begin{deff}[{Definitions~31, 33 in \cite{my.paper}}]\label{def.shoots.into}
   Suppose that
   $\mathbf{H},\mathbf{F}\hspace{-1pt}$ are nonincreasing foliage trees and $\mathbf{G}$ is a foliage graft for $\mathbf{F}.$

   \begin{itemize}
   \item [\ding{46}]
      $\mathsurround=0pt
      \mathbf{H}\,$ \textbf{shoots into} $\mathbf{F}
      \quad{\colon}{\longleftrightarrow}\quad
      \forall{p}\,{\in}\hspace{1pt}\flesh\mathbf{H}
      \ \,\forall{y}\,{\in}\hspace{1pt}\scope_\mathbf{F}({p})
      \ \,\exists{x}\,{\in}\hspace{1pt}\scope_\mathbf{H}({p})
      \ \,\big[\shoot_\mathbf{H}({x})\gg\shoot_\mathbf{F}({y})\big].$
   \item [\ding{46}]
      $\mathsurround=0pt
      \mathbf{G}\,$ \textbf{preserves shoots} of $\mathbf{F}
      \quad{\colon}{\longleftrightarrow}\quad$
      \begin{itemize}

      \item[]
         for each ${p}\in\flesh\mathbf{G}$ and
         for each ${y}\in\scope_\mathbf{F}({p})
         \cap\big(\{{0}_{\mathbf{G}}\}\cup\expl(\mathbf{F},\mathbf{G})\big)$
      \item[]
         there is ${x}\in\scope_\mathbf{G}({p})
         \cap\big(\{{0}_{\mathbf{G}}\}\cup\impl\mathbf{G}\big)$
         such that
         $\:\big[
         \shoot_{\mathbf{G}}({x})
            \gg\shoot_\mathbf{F}({y})
         \big].$
         $\vphantom{\bar{\big[\big]}}$
      \end{itemize}
   \end{itemize}
\end{deff}

\begin{lem}[{Lemma~34 in \cite{my.paper}}]\label{lem.when.f.hyb.sprts.thrgh}
   Suppose that $\mathbf{F}\hspace{-1pt}$ is a nonincreasing foliage tree,
   $\varphi$ is a consistent family of foliage grafts for $\mathbf{F},$
   the foliage hybrid of $\mathbf{F}\hspace{-1pt}$ and $\varphi$ has nonempty leaves, and
   each $\mathbf{G}\in\varphi$ preserves shoots of $\mathbf{F}.$
   Then the foliage hybrid of $\mathbf{F}\hspace{-1pt}$ and $\varphi$ shoots into $\mathbf{F}.$
\end{lem}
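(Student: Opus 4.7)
The plan is to fix arbitrary $p \in \flesh\mathbf{H}$ (writing $\mathbf{H} \coloneq \fhybr(\mathbf{F}, \varphi)$) and $y \in \scope_\mathbf{F}(p)$, and then produce a witness $x \in \scope_\mathbf{H}(p)$ with $\shoot_\mathbf{H}(x) \gg \shoot_\mathbf{F}(y)$ by a case analysis on the location of $y$ in the skeleton. The crucial preliminary observation is that $p \notin \loss(\mathbf{F}, \varphi)$: any hybrid leaf $\mathbf{H}_z$ containing $p$ is by construction disjoint from $\loss$, and hence $p \in \mathbf{F}_y \setminus \loss(\mathbf{F}, \varphi)$.

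First I would treat the straightforward subcase in which $y \in \supp(\skeleton\mathbf{F}, \gamma)$ with $y \neq {0}_{\mathbf{G}}$ for every $\mathbf{G} \in \varphi$ (here $\gamma \coloneq \{\skeleton\mathbf{G} : \mathbf{G} \in \varphi\}$). I take $x = y$. By Lemma~\ref{prop.hybr}(b)--(c), $y \in \nodes\mathbf{H}$ and $\sons_\mathbf{H}(y) = \sons_\mathbf{F}(y)$. Since $\impl\mathbf{G}$ is disjoint from $\nodes\mathbf{F}$ for every $\mathbf{G} \in \varphi$ (graft definition), each $s \in \sons_\mathbf{F}(y)$ falls into the ``otherwise'' clause of the definition of $\mathbf{H}_s$, giving $\mathbf{H}_s = \mathbf{F}_s \setminus \loss(\mathbf{F},\varphi) \subseteq \mathbf{F}_s$. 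Consequently $\flesh_\mathbf{H}(C) \subseteq \flesh_\mathbf{F}(C)$ for every cofinite $C \subseteq \sons_\mathbf{F}(y)$, and nonemptiness of $\flesh_\mathbf{H}(C)$ for nonempty $C$ follows from the hypothesis that $\mathbf{H}$ has nonempty leaves; this delivers $\shoot_\mathbf{H}(y) \gg \shoot_\mathbf{F}(y)$.

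The two remaining cases --- $y = {0}_{\mathbf{G}}$ for some $\mathbf{G} \in \varphi$, and $y \in \expl(\mathbf{F}, \mathbf{G})$ for some $\mathbf{G} \in \varphi$ --- I would handle uniformly by invoking the preserves-shoots hypothesis for the appropriate $\mathbf{G}$. To apply it I first verify $p \in \flesh\mathbf{G} = \mathbf{G}_{{0}_{\mathbf{G}}}$: nonincreasingness of $\mathbf{F}\hspace{-1pt}$ together with $y \geqslant_\mathbf{F} {0}_{\mathbf{G}}$ yields $p \in \mathbf{F}_{{0}_{\mathbf{G}}}$, and $p \notin \cut(\mathbf{F}, \mathbf{G}) \subseteq \loss(\mathbf{F},\varphi)$ forces $p \in \mathbf{G}_{{0}_{\mathbf{G}}}$. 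The preserves-shoots property then produces $x \in \scope_\mathbf{G}(p) \cap (\{{0}_{\mathbf{G}}\} \cup \impl\mathbf{G})$ with $\shoot_\mathbf{G}(x) \gg \shoot_\mathbf{F}(y)$. Such an $x$ is automatically in $\nodes\mathbf{H}$, and $p \in \mathbf{G}_x \setminus \loss(\mathbf{F},\varphi) \subseteq \mathbf{H}_x$ in both subcases, so $x \in \scope_\mathbf{H}(p)$. By Lemma~\ref{prop.hybr}(c), $\sons_\mathbf{H}(x) = \sons_\mathbf{G}(x)$, and for each $s \in \sons_\mathbf{G}(x)$ a parallel analysis (split according to $s \in \impl\mathbf{G}$ or $s \in \maxel\mathbf{G}$, using $\mathbf{G}_s = \mathbf{F}_s$ in the latter subcase) gives $\mathbf{H}_s \subseteq \mathbf{G}_s$, hence $\shoot_\mathbf{H}(x) \gg \shoot_\mathbf{G}(x)$; transitivity of $\pi$-refinement then yields $\shoot_\mathbf{H}(x) \gg \shoot_\mathbf{F}(y)$.

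The main obstacle is the careful bookkeeping required to justify the pointwise containments $\mathbf{H}_s \subseteq \mathbf{F}_s$ or $\mathbf{H}_s \subseteq \mathbf{G}_s$ for the relevant sons $s$, which relies on the disjointness conditions encoded in the definitions of $\impl$, $\maxel$, $\supp$, and the consistency of $\varphi$ (e.g.\ the fact, used implicitly above, that the root of any graft lies in the support of every other graft). No new ideas are needed beyond Lemma~\ref{prop.hybr}(c) and the preserves-shoots hypothesis, but every piece of the consistent-family structure must be threaded through precisely to guarantee that the chosen $x$ actually lies in $\scope_\mathbf{H}(p)$ and that shoot inclusions propagate from $\mathbf{G}$ (or $\mathbf{F}$) up to $\mathbf{H}$.
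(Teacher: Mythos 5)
Your argument is correct: the case split on $y$ (in $\supp(\skeleton\mathbf{F},\gamma)$ but not a root, equal to some ${0}_{\mathbf{G}}$, or in $\expl(\mathbf{F},\mathbf{G})$) is exhaustive, the preliminary observation $p\nin\loss(\mathbf{F},\varphi)$ is exactly what makes $p\in\flesh\mathbf{G}$ and $x\in\scope_{\mathbf{H}}(p)$ go through, and the shoot comparisons $\shoot_{\mathbf{H}}({x})\gg\shoot_{\mathbf{G}}({x})\gg\shoot_{\mathbf{F}}({y})$ follow from Lemma~\ref{prop.hybr}(c), the leaf inclusions $\mathbf{H}_{{s}}\subseteq\mathbf{G}_{{s}}$ (resp.\ $\mathbf{F}_{\hspace{-1.2pt}{s}}$), and the nonempty-leaves hypothesis. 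The present paper only quotes this statement from \cite{my.paper} without reproducing its proof, but your reconstruction is the natural argument forced by the definitions and I see no gap in it.
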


\begin{lem}[{Lemma~32 in \cite{my.paper}}]\label{l.grows.into.subspace}
   Suppose that a foliage tree $\mathbf{H}\hspace{-1pt}$ shoots into a foliage tree~$\mathbf{F}\hspace{-1pt}$
   and $\mathbf{F}\hspace{-1pt}$ grows into a space~${X}.$
   Then
   $\mathbf{H}\hspace{-1pt}$ grows into the subspace ${X}\cap\flesh\mathbf{H}\hspace{-1pt}$ of ${X}.$
\end{lem}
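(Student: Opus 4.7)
The plan is to verify directly, from the definition of \textsf{grows into}, that for every point $p$ in the subspace ${X}\cap\flesh\mathbf{H}$ and every neighbourhood $V\in\nbhds(p,{X}\cap\flesh\mathbf{H})$ there is a node $x\in\scope_\mathbf{H}(p)$ with $\shoot_\mathbf{H}(x)\gg\{V\}$. I would start by unpacking the subspace topology: from $V\in\nbhds(p,{X}\cap\flesh\mathbf{H})$ I choose an open set $U$ of ${X}$ such that $p\in U$ and $U\cap({X}\cap\flesh\mathbf{H})\subseteq V$. Since the underlying set of the space ${X}$ is ${X}$ itself, I automatically have $U\subseteq{X}$, and clearly $U\in\nbhds(p,{X})$.

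Next I would invoke the two hypotheses in sequence. Applying ``$\mathbf{F}\hspace{-1pt}$ grows into ${X}$'' to the neighbourhood $U$ produces a node $y\in\scope_\mathbf{F}(p)$ with $\shoot_\mathbf{F}(y)\gg\{U\}$; unfolding $\gg$ against the nonempty set $U$ yields a nonempty $S\in\shoot_\mathbf{F}(y)$ with $S\subseteq U$. Then, because $p\in\flesh\mathbf{H}$ and $y\in\scope_\mathbf{F}(p)$, the hypothesis ``$\mathbf{H}\hspace{-1pt}$ shoots into $\mathbf{F}\hspace{-1pt}$'' supplies $x\in\scope_\mathbf{H}(p)$ with $\shoot_\mathbf{H}(x)\gg\shoot_\mathbf{F}(y)$, and unfolding this refinement against the nonempty $S$ produces a nonempty $T\in\shoot_\mathbf{H}(x)$ with $T\subseteq S$.

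To finish I would check that $T\subseteq V$, which immediately gives $\shoot_\mathbf{H}(x)\gg\{V\}$ (note $V\neq\varnothing$ because $p\in V$). The chain $T\subseteq S\subseteq U\subseteq{X}$ gives $T\subseteq{X}$, while $T\subseteq\flesh\mathbf{H}$ is automatic because, by definition of $\shoot_\mathbf{H}(x)$, the set $T$ is a union of leaves of $\mathbf{H}$. Combining the two inclusions yields $T\subseteq U\cap({X}\cap\flesh\mathbf{H})\subseteq V$, as required.

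I do not expect a serious obstacle here: the whole argument is a two-step diagram chase through the definitions of \textsf{grows into}, \textsf{shoots into}, and $\gg$. The only step that warrants attention is the last one, where one has to notice that choosing $U$ open in ${X}$ (so that $U\subseteq{X}$) is precisely what forces the witness $T$ to lie inside the subspace ${X}\cap\flesh\mathbf{H}$ in which $V$ lives.
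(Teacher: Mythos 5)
Your argument is correct: the paper itself does not prove this lemma (it is imported verbatim as Lemma~32 of \cite{my.paper}), and your two-step unwinding of the definitions of $\nbhds$, $\gg$, \textsf{grows into}, and \textsf{shoots into} --- including the observation that a member of $\shoot_{\mathbf{H}}({x})$ is a union of leaves of $\mathbf{H}$ and hence lands inside $\flesh\mathbf{H}$, which is what pulls the witness back into the subspace --- is exactly the intended diagram chase. Nothing further is needed.
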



\end{document}